\newtheorem{mytheo}{Theorem}[section]
\newtheorem{lem}[mytheo]{Lemma}
\newtheorem{algo}[mytheo]{Algorithm}
\newtheorem{rem}{Remark}[section]
\newcommand{\norm}[1]{\left\Vert#1\right\Vert}
\newcommand{\abs}[1]{\left\vert#1\right\vert}
\newcommand{\eps}{\varepsilon}
\newcounter{problem}
\newenvironment{problem}{\refstepcounter{problem}\vspace{1.5ex}
	{\noindent\bf Problem
		\theproblem.}\hspace{0.3em}\parindent=0pt}{\vspace{1ex}}
\def\@upcite#1#2{\textsuperscript{[{#1\if@tempswa , #2\fi}]}}
\newenvironment{proof}{\vspace{1ex}
	{\it Proof. }\hspace{0.3em}}{\vspace{1ex}} \journal{ }
\DeclareMathOperator{\sinc}{sinc}
\begin{document}
	\begin{frontmatter}
		\title{A novel class of linearly implicit energy-preserving schemes\\ for conservative systems}

		\author[1]{Xicui Li}
		\author[1]{Bin Wang\corref{cor1}}
		\author[2]{Xin Zou}
		
		\address[1]{ School of Mathematics and Statistics, Xi'an Jiaotong University, 710049 Xi'an, China}
		\address[2]{ China Huaneng Clean Energy Research Institute, 102209 Beijing, China}		
		
		\ead{lixicui@stu.xjtu.edu.cn, wangbinmaths@xjtu.edu.cn, x_zou@qny.chng.com.cn}\cortext[cor1]{Corresponding author.}

		
		\begin{abstract}
			We consider {a kind of} differential equations $\dot{y}(t)=R(y(t))y(t)+f(y(t))$ with energy conservation. Such conservative models appear for
			instance in quantum physics, engineering {and} molecular dynamics. A new class of energy-preserving schemes {is} constructed by the ideas of scalar auxiliary variable (SAV) and splitting,  from which the  nonlinearly implicit schemes have been improved to be linearly implicit. The energy conservation and error estimates are rigorously derived. Based on these results, it is shown that the new proposed schemes have  unconditionally energy stability and can be implemented with a cost of solving a linearly implicit system. Numerical experiments are done to confirm these good features of the new schemes.
			
		\end{abstract}
		
		\begin{keyword}
			Scalar auxiliary variable, Linearly implicit scheme, Splitting scheme, Energy conservation, Error estimate.
		\end{keyword}
	\end{frontmatter}
	\vskip0.5cm \noindent Mathematics Subject Classification (2010):
	{{65L05, 65L70, 65P10.}} \vskip0.5cm \pagestyle{myheadings}
	\thispagestyle{plain}
	\section{Introduction}
	This paper is devoted to  numerical solutions of the following differential equation:
	\begin{equation}\label{IVP}
		\dot{y}(t)=R(y(t))y(t)+f(y(t)),\quad y(0)=y_0, \quad t\in[0,T],
	\end{equation}
	where  the unknown function $y(t):[0,T]\to X$ is in a finite dimensional space $X$,
	$R(y)$ is an operator of $X$ and may depend on $y$, and $y_0$ is the given initial data. Such kind of system arises from
	a large amount of applications, such as mechanics, quantum physics, molecular dynamics, circuit simulations and engineering.
	In recent years, many researchers have considered two important conservative systems of \eqref{IVP}: oscillatory second-order
	differential equations and charged-particle dynamics, and both of them can be written in the form  \eqref{IVP}. A brief introduction on these two systems is given in the following {two} paragraphs.
	
	\textbf{Oscillatory second-order
		differential equations  (OSDE).}
	We are firstly concerned with the oscillatory second-order differential equations {(OSDE)} of the form
	\begin{equation}\label{oso}
		\ddot{q}(t)+\frac{1}{\eps^2}Aq(t)=F(q(t)),\quad q(0)=q_0, \quad \dot{q}(0):=p_0,\quad t\in[0,T],
	\end{equation}
	where $q,p:=\dot{q}: [0,T] \rightarrow \mathbb{R}^d$  represent generalized
	position and velocity respectively, $A$ is a positive {semi-definite} matrix of bounded norm independent of  $\eps$, $0<\eps \leq1$ is inversely proportional to the spectral
	radius of $A$, and $F(q(t))$ is a nonlinear function with a Lipschitz constant bounded independent of $\eps$.
	This oscillatory  second-order system \eqref{oso}
	{recurrently appears in} applied mathematics, molecular biology, chemistry, electronics,
	astronomy, classical mechanics and quantum physics.
	Essentially, all mechanical systems with a partitioned Hamiltonian
	function lend themselves to this form. {One of our important examples
		is discretized partial differential equations (PDEs) such as wave equation, Klein-Gordon equation and sine-Gordon equation for which the method proposed in this paper is
		successfully applied.}
	If we let $z=\begin{pmatrix}
		q\\p
	\end{pmatrix}$ (omit the time variable $t$), then $z$ satisfies the equation \eqref{IVP} with $R(z)\equiv R=\begin{pmatrix}
		O_d&I_d\\
		-\frac{1}{\eps^2}A&O_d
	\end{pmatrix}$ and $f(z)=\begin{pmatrix}
		\textbf{0}_d\\F(q)
	\end{pmatrix}$, where $O_d$ and $I_d$ respectively denote the $d\times d$  zero matrix  and identity matrix, $\textbf{0}_d$ is referred to  a $d$-dimensional zero vector and the initial value is $z_0=\begin{pmatrix}
		q_0\\ p_0
	\end{pmatrix}$.  Furthermore, if the positive semi-definite matrix $A$ is symmetric and the nonlinear term {is defined by} $F(q)=-\nabla V(q)$ with a smooth scalar potential $V(q)$, then
	the model \eqref{oso} becomes a Hamiltonian system  $\dot{z}=J\nabla H_1(z)$ with $J=\begin{pmatrix}
		O_d&I_d\\
		-I_d&O_d
	\end{pmatrix}$  and the Hamiltonian function:
	$$H_1(q,p)=\frac{1}{2}p^\intercal p+\frac{1}{2\eps^2}q^\intercal Aq+V(q).$$

	\textbf{Charged-particle dynamics (CPD).} We now pay attention to the movement of a charged particle in an electromagnetic field, and this is described by the charged-particle dynamics (CPD):\begin{equation}\label{CPD}
		\ddot{x}(t)=\dot{x}(t)\times B(x(t))+E(x(t)),\quad x(0)=x_0,\quad \dot{x}(0)=\dot{x}_0,\quad t\in[0,T],
	\end{equation}
	where {$x(t):[0,T]\to \mathbb{R}^3$ is the unknown position,}
	$B(x(t))=(b_1(x(t)),b_2(x(t)),b_3(x(t)))^\intercal$ is the magnetic field and $E(x(t))$ is the negative gradient of a scalar potential $U(x(t))$. {This kind of system is  interesting for important applications and for example, it appears in Vlasov equation which {is} of paramount importance in tokamak plasmas.} Clearly, letting $v:=\dot{x}$ (omit the time variable $t$) and $y:=\begin{pmatrix}
		x\\v
	\end{pmatrix}$, {the CPD \eqref{CPD}} satisfies the equation \eqref{IVP} with $R(y)=\begin{pmatrix}
		O_3&I_3\\
		O_3&\widehat{B}(x)
	\end{pmatrix}$ and $f(y)=\begin{pmatrix}
		\textbf{0}_3\\E(x)
	\end{pmatrix},$ where $\widehat{B}(x)=\begin{pmatrix}
		0&b_3(x)&-b_2(x)\\
		-b_3(x)&0&b_1(x)\\
		b_2(x)&-b_1(x)&0
	\end{pmatrix}$. {Additionally, as a conservative system, its} {solution exactly {conserves} the  energy}
	\begin{equation}\label{H(x,v)}
		H_2(x,v)=\frac{1}{2}\norm{v}^2+U(x)
	\end{equation}
	with the Euclidean norm  $\norm{\cdot}$.
	
	As stated above, the conservative system \eqref{IVP} admits  energy
	conservation law. Naturally it is
	desirable to propose a numerical method preserving the energy in the discrete sense \cite{06Hairer,21Exponential} and thus energy-preserving (EP) methods which can inherit energy
	conservation law  have gained remarkable success. In recent years, a large amount of effective energy-preserving methods have been developed and analysed for the conservative system \eqref{IVP}  (see, e.g. \cite{12Brugnano,22Long,22Energy,20An,21Exponential,19vp,21Error,13Wu}). However, these energy-preserving methods are all implicit and they require a nonlinear iterative process consuming a lot of time. In order to improve the efficiency of EP methods,
	{some novel technologies have been developed recently.}
	The SAV (scalar auxiliary variable) approach \cite{2018Shen,2019Shen} is developed based on the principle of the energy quadratization as well as the IEQ (invariant energy quadratization) approach \cite{17Yang,17Zhao}, and this yields linearly implicit energy stable schemes. By introducing an auxiliary scalar instead of an auxiliary function in the IEQ  approach, the SAV  strategy is proposed for  constructing energy stable schemes for a broad class of gradient flows \cite{79Allen,58Cahn,04Elder}. This approach offers numerous advantages over conventional energy-preserving techniques, including significant computing efficiency. Therefore,
	it has been applied successfully to many classical models such as
	nonlinear Schrödinger equation \cite{21Akrivis,21Li,22Poulain}, nonlinear Klein-Gordon equation \cite{20Jiang}, Navier–Stokes equation \cite{21Huang,22Zhang}, and so on. However,
	it seems that {the SAV  strategy has not been taken into account}  for  the conservative system \eqref{IVP},
	which motivates this paper.
	
	In this paper, a novel class of linearly implicit energy-preserving schemes is derived and analysed  for the conservative system \eqref{IVP}. The main contributions are {given} as follows. The new methods are formulated based on the splitting of the system and the SAV approach which produce exponential linearly implicit energy-preserving methods. This kind of methods shares the advantages of exponential integrators \cite{16Li, 19Shen} and SAV schemes \cite{2018Shen,2019Shen}. To the best of our knowledge, it seems that this is the first work in the direction
	of designing such SAV-type methods for  the conservative system \eqref{IVP}.
	
	The rest of the paper is organised as follows. In Section \ref{sec2}, we present the formulation of the scheme and its energy-preserving {property} for the OSDE \eqref{oso}.  {The extension of this strategy to  the CPD \eqref{CPD}} is  investigated in  Section \ref{sec3}.  The convergence of the obtained methods  is discussed in Section \ref{sec4}. Section \ref{sec5} carries out five numerical experiments to show the performance and efficiency of the proposed methods.  Finally, some conclusions are drawn in the last section.
	
	\section{Energy-preserving {method} for {OSDE}}\label{sec2}
	In this section, {we formulate the new {method for the OSDE} \eqref{oso}.}
	We start by introducing the notations shown below:
	\begin{equation}
		\delta q^n:=\frac{q^{n+1}-q^n}{h},\quad q^{n+\frac{1}{2}}:=\frac{q^{n+1}+q^n}{2},
	\end{equation}
	{where} $h$ is the step size, $q^n$ refers to the numerical approximation of the solution $q(t_n)$ at $t_n=nh$.
	\subsection{Formulation of the {method}}
	Supposing that the scalar potential $V(q)$ is bounded from below, i.e., there exists $c_0>0$ such that $V(q)\ge -c_0$,  we construct a new function $s(t)=\sqrt{V(q)+C_0}$ with the initial value $s(0)=\sqrt{V(q_0)+C_0}$. Here it is required that $C_0>c_0$ such that $V(q)+C_0\ge C_0-c_0>0$. Then the system \eqref{oso} can be rewritten in the form:
	\begin{equation}\label{new-oso}
		\frac{d}{dt}\begin{pmatrix}
			q\\p\\s	
		\end{pmatrix}=
		\begin{pmatrix}
			p\\
			-\frac{1}{\eps^2}Aq+\frac{F(q)}{\sqrt{V(q)+C_0}}s\\
			-\frac{\dot{q}^\intercal F(q)}{2\sqrt{V(q)+C_0}}
		\end{pmatrix},\quad
		\begin{pmatrix}
			q(0)\\p(0)\\s(0)
		\end{pmatrix}=
		\begin{pmatrix}
			q_0\\p_0\\\sqrt{V(q_0)+C_0}
		\end{pmatrix}.
	\end{equation}
	Let $g(q,s)=\begin{pmatrix}
		-\frac{F(q)}{\sqrt{V(q)+C_0}}s\\\textbf{0}_d
	\end{pmatrix}$, $R=JM$ with $J=\begin{pmatrix}
		O_d&I_d\\-I_d&O_d
	\end{pmatrix}$ and $M=\begin{pmatrix}
		\frac{1}{\eps^2}A&O_d\\O_d&I_d
	\end{pmatrix}$.
	Integrating the equations  \eqref{new-oso} from $t_n$ to $t_{n+1}$ by the variation-of-constants formula, we get
	\begin{equation*}
		\begin{aligned}
			&z(t_{n+1})=\exp(hR)z(t_n)+h\int_{0}^{1}\exp((1-\sigma)hR)Jg(q(t_n+\sigma h),s(t_n+\sigma h))d\sigma,\\
			&s(t_{n+1})=s(t_n)-h\int_{0}^{1}\frac{(\dot{q}(t_n+h\sigma))^\intercal F(q(t_n+h\sigma))}{2\sqrt{V(q(t_n+h\sigma))+C_0}}d\sigma.
		\end{aligned}
	\end{equation*}
	
	Using $g(\tilde{q}^{n+\frac{1}{2}},s^{n+\frac{1}{2}})$ and $\delta q^n$ to replace $g(q(t_n+\sigma h),s(t_n+\sigma h))$ and $\dot{q}(t_n+h\sigma)$ {respectively}, we then arrive at the subsequent linearly implicit scheme:
	\begin{equation}\label{ESAV}
		\begin{aligned}
			&z^{n+1}=\exp(hR)z^n+h\varphi(hR) Jg(\tilde{q}^{n+\frac{1}{2}},s^{n+\frac{1}{2}}),\ \ s^{n+1}=s^n-\frac{(q^{n+1}-q^n)^\intercal F(\tilde{q}^{n+\frac{1}{2}})}{2\sqrt{V(\tilde{q}^{n+\frac{1}{2}})+C_0}},
		\end{aligned}
	\end{equation}
	{where $\tilde{q}^{n+\frac{1}{2}}:=\begin{pmatrix}
			I_d&O_d
		\end{pmatrix} \frac{1}{2}\left( I_{2d}+\exp(hR)\right)z^n $ is the approximation of $q^{n+\frac{1}{2}}$ and the scalar function is given by $\varphi(z)=\int_{0}^{1}\exp((1-\sigma)z)d\sigma=(\exp(z)-1)/z$.}
	
	Next, we describe how the aforementioned scheme can be implemented efficiently.
	{According to} the scheme of $R$,  $\exp(hR)$ and $\varphi(hR)$ can be partitioned into
	$$\exp(hR)=\begin{pmatrix}
		\cos(h\sqrt{A}/\eps)&h\sinc(h\sqrt{A}/\eps)\\
		-\sqrt{A}/\eps\sin(h\sqrt{A}/\eps)&\cos(h\sqrt{A}/\eps)
	\end{pmatrix},\quad \varphi(hR)=\begin{pmatrix}
		\sinc(h\sqrt{A}/\eps)&hg_1(h\sqrt{A}/\eps)\\
		h^{-1}g_2(h\sqrt{A}/\eps)&\sinc(h\sqrt{A}/\eps)
	\end{pmatrix},$$
	with functions $\sinc(z)=\sin(z)/z$, $g_1(z)=(1-\cos(z))/z^2$ and $g_2(z)=\cos(z)-1.$
	{Then we rewrite} \eqref{ESAV} as
	\begin{align}
		q^{n+1}=&\cos(h\sqrt{A}/\eps)q^n+h\sinc(h\sqrt{A}/\eps)p^n+h^2g_1(h\sqrt{A}/\eps)\frac{ F(\tilde{q}^{n+\frac{1}{2}})}{\sqrt{V(\tilde{q}^{n+\frac{1}{2}})+C_0}}s^{n+\frac{1}{2}}\label{q^n},\\
		p^{n+1}=&-\sqrt{A}/\eps\sin(h\sqrt{A}/\eps)q^n+\cos(h\sqrt{A}/\eps)p^n+h\sinc(h\sqrt{A}/\eps)\frac{ F(\tilde{q}^{n+\frac{1}{2}})}{\sqrt{V(\tilde{q}^{n+\frac{1}{2}})+C_0}}s^{n+\frac{1}{2}}\label{p^n},\\
		s^{n+1}=&s^n-\frac{(q^{n+1}-q^n)^\intercal F(\tilde{q}^{n+\frac{1}{2}})}{2\sqrt{V(\tilde{q}^{n+\frac{1}{2}})+C_0}}\label{s^n}.
	\end{align}
	By substituting \eqref{s^n} into \eqref{q^n}, we obtain the following linear equation
	\begin{equation}\label{l_n}
		q^{n+1}+\gamma_n\left(F(\tilde{q}^{n+\frac{1}{2}})\right) ^\intercal q^{n+1}=l_n,
	\end{equation}
	where $\gamma_n=\frac{h^2g_1(h\sqrt{A}/\eps)F(\tilde{q}^{n+\frac{1}{2}})}{4V(\tilde{q}^{n+\frac{1}{2}})+4C_0}$ and { $$l_n=\cos(h\sqrt{A}/\eps)q^n+h\sinc(h\sqrt{A}/\eps)p^n+h^2g_1(h\sqrt{A}/\eps)\frac{ F(\tilde{q}^{n+\frac{1}{2}})}{\sqrt{V(\tilde{q}^{n+\frac{1}{2}})+C_0}}s^n+\gamma_n \left( F(\tilde{q}^{n+\frac{1}{2}})\right) ^\intercal q^n.$$}
	
	As we can see, the crucial step is to determine $\left(  F(\tilde{q}^{n+\frac{1}{2}})\right) ^\intercal q^{n+1}$ from the above equation. To this end, taking the inner product of \eqref{l_n} with $F(\tilde{q}^{n+\frac{1}{2}})$, we obtain
	$$\left[ 1+\left( F(\tilde{q}^{n+\frac{1}{2}})\right) ^\intercal \gamma_n\right] \left( F(\tilde{q}^{n+\frac{1}{2}})\right) ^\intercal q^{n+1}=\left( F(\tilde{q}^{n+\frac{1}{2}})\right) ^\intercal l_n,$$
	where  $1+\left( F(\tilde{q}^{n+\frac{1}{2}})\right) ^\intercal \gamma_n> 0$, {since $h^2g_1(h\sqrt{A}/\eps)$ is a symmetrical positive semi-definite matrix.} Consequently,
	\begin{equation}\label{Fq}
		\left( F(\tilde{q}^{n+\frac{1}{2}})\right) ^\intercal q^{n+1}=\frac{\left( F(\tilde{q}^{n+\frac{1}{2}})\right) ^\intercal l_n}{1+\left( F(\tilde{q}^{n+\frac{1}{2}})\right) ^\intercal \gamma_n},
	\end{equation}
	and then we can obtain $q^{n+1}$ from the above equation and the linear equation \eqref{l_n}. Subsequently, $s^{n+1}$ is obtained from \eqref{s^n}. Finally, we get $p^{n+1}$ by substituting $s^{n+1}$ into \eqref{p^n}.  As a result, the total cost at each time step comes essentially from solving two linear equations $\left( F(\tilde{q}^{n+\frac{1}{2}})\right) ^\intercal l_n$ and $\left( F(\tilde{q}^{n+\frac{1}{2}})\right) ^\intercal q^{n+1}$ using \eqref{Fq} with constant coefficients. This demonstrates that the scheme is absolutely efficient and straightforward to implement. This new scheme \eqref{q^n}-\eqref{s^n} will be referred as E2-SAV.
	
	\subsection{Energy-preserving property}
	{We first present an energy-preserving property of the new system \eqref{new-oso}.}
	\begin{lem}
		The system \eqref{new-oso} conserves the following modified energy:
		$$\widehat{H}(q,p,s)=\frac{1}{2}p^\intercal p+\frac{1}{2\eps^2}q^\intercal Aq+s^2-C_0=\widehat{H}(q_0,p_0,s_0).$$
	\end{lem}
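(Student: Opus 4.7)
The plan is to verify the conservation law by direct differentiation along trajectories of \eqref{new-oso} and to observe a clean cancellation between the contribution of the $\dot s$ equation and the nonlinear part of the $\dot p$ equation. Concretely, I would differentiate
\[
\widehat{H}(q,p,s)=\tfrac{1}{2}p^{\intercal}p+\tfrac{1}{2\eps^{2}}q^{\intercal}Aq+s^{2}-C_{0}
\]
with respect to $t$, obtaining
\[
\frac{d}{dt}\widehat{H}=p^{\intercal}\dot{p}+\tfrac{1}{2\eps^{2}}\bigl(\dot{q}^{\intercal}Aq+q^{\intercal}A\dot{q}\bigr)+2s\dot{s}.
\]
Because in the Hamiltonian setting described right after \eqref{oso} the matrix $A$ is symmetric, one may replace the middle group by $\tfrac{1}{\eps^{2}}q^{\intercal}A\dot{q}$.

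Next I would substitute the three right-hand sides of \eqref{new-oso}: $\dot q=p$, $\dot p=-\tfrac{1}{\eps^{2}}Aq+\tfrac{F(q)}{\sqrt{V(q)+C_{0}}}\,s$, and $\dot s=-\tfrac{\dot q^{\intercal}F(q)}{2\sqrt{V(q)+C_{0}}}=-\tfrac{p^{\intercal}F(q)}{2\sqrt{V(q)+C_{0}}}$. This gives
\[
\frac{d}{dt}\widehat{H}=-\tfrac{1}{\eps^{2}}p^{\intercal}Aq+\frac{p^{\intercal}F(q)\,s}{\sqrt{V(q)+C_{0}}}+\tfrac{1}{\eps^{2}}q^{\intercal}Ap-\frac{s\,p^{\intercal}F(q)}{\sqrt{V(q)+C_{0}}}.
\]
Symmetry of $A$ makes the two quadratic terms in $q,p$ cancel, and the two terms involving $F(q)$ are equal and opposite, so $\frac{d}{dt}\widehat{H}\equiv 0$ along any solution. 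Integrating from $0$ to $t$ and using $s(0)=\sqrt{V(q_{0})+C_{0}}$ then yields $\widehat{H}(q,p,s)=\widehat{H}(q_{0},p_{0},s_{0})$.

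There is no real obstacle here; the only point that deserves a line of comment is why the cross terms $\tfrac{1}{\eps^{2}}(\dot q^{\intercal}Aq+q^{\intercal}A\dot q)$ collapse to $\tfrac{2}{\eps^{2}}q^{\intercal}A\dot q$, which relies on $A$ being symmetric (the standing assumption for the Hamiltonian form $H_{1}$ recalled in the introduction). The entire calculation is independent of the specific nonlinearity $F$ beyond the algebraic identity $\dot s\,s=-\tfrac{1}{2}\dot q^{\intercal}F(q)\cdot s/\sqrt{V(q)+C_{0}}$, which is precisely what the scalar auxiliary variable was introduced to guarantee.
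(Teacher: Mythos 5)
Your proposal is correct and follows essentially the same route as the paper: both differentiate $\widehat{H}$ along solutions (the paper phrases it as multiplying the $\dot p$ equation by $p^{\intercal}$), use the symmetry of $A$ for the quadratic term, and let the $\dot s$ equation cancel the nonlinear contribution. Your write-up is just a more explicit version of the paper's one-line computation.
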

	\begin{proof}{Multiplying $p^\intercal$ with the second equation in system \eqref{new-oso} gives}
		$$\frac{d}{dt}\left( \frac{1}{2}p^\intercal p\right)=p^\intercal \dot{p}=p^\intercal\left( -\frac{1}{\eps^2}Aq+\frac{F(q)}{\sqrt{V(q)+C_0}}s\right)=-\frac{d}{dt}(\frac{1}{2\eps^2}q^\intercal Aq+s^2-C_0),$$
		which completes the proof.
	\end{proof}
	
	{Before deriving the energy-preserving property of E2-SAV, the following proposition is needed.}
	\begin{lem}\label{null}
		For any symmetric matrix $M$ and scalar $h\ge0$, {it is true that
			$$\exp(hJM)^\intercal M\exp(hJM)=M,$$  where} $J$ is skew symmetric.
	\end{lem}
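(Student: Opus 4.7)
The plan is to fix the right-hand side $M$ and view the left-hand side as a function of $h$, then differentiate in $h$ and show the derivative vanishes identically. Concretely, define
\[
\Phi(h) := \exp(hJM)^{\intercal} M \exp(hJM),
\]
so that $\Phi(0) = M$. It suffices to prove $\Phi'(h) = 0$ for all $h \ge 0$; the claim then follows from the fundamental theorem of calculus (or from uniqueness for the trivial matrix ODE $\Phi' = 0$ with initial datum $M$).

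To compute $\Phi'(h)$, I would use the fact that $\frac{d}{dh}\exp(hJM) = (JM)\exp(hJM) = \exp(hJM)(JM)$, together with the transpose rule $\frac{d}{dh}\exp(hJM)^{\intercal} = \exp(hJM)^{\intercal}(JM)^{\intercal}$. Applying the product rule gives
\[
\Phi'(h) = \exp(hJM)^{\intercal}\bigl[(JM)^{\intercal} M + M(JM)\bigr]\exp(hJM).
\]
The key algebraic step is then to show that the bracketed matrix is zero. Using $M^{\intercal}=M$ and $J^{\intercal}=-J$, one has $(JM)^{\intercal} M = M^{\intercal}J^{\intercal}M = -MJM$, so the bracket equals $-MJM + MJM = 0$. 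Hence $\Phi'(h)\equiv 0$, and combined with $\Phi(0)=M$ this yields $\Phi(h)=M$ for every $h\ge 0$.

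There is essentially no serious obstacle here; the statement is a matrix analogue of the fact that the Hamiltonian flow generated by a symmetric quadratic form preserves that form. The only point that deserves a brief remark is the commutation $(JM)\exp(hJM)=\exp(hJM)(JM)$, which is immediate from the power series definition of the matrix exponential. I would write the proof in exactly these three steps: introduce $\Phi$, differentiate, and simplify the bracket using the symmetry/skew-symmetry hypotheses.
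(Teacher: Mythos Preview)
Your argument is correct: defining $\Phi(h)=\exp(hJM)^{\intercal}M\exp(hJM)$, differentiating, and using $M^{\intercal}=M$, $J^{\intercal}=-J$ to kill the bracket is exactly the standard way to prove this identity. The paper itself does not give a proof at all---it simply states that the lemma is straightforward and skips it---so there is nothing to compare against; your write-up fills that gap cleanly.
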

	
	The proof of this lemma is straightforward and we skip it for brevity. With these preparations,   our new method has the following energy conservation property.
	\begin{mytheo}\label{ep}
		The linearly implicit scheme E2-SAV exactly preserves the modified energy $\widehat{H}(q,p,s)$ at discrete level, i.e., {for $n=0,1,\ldots,T/h-1,$}
		$$\widehat{H}_h^{n+1}=\widehat{H}_h^n \quad {\textmd{with}} \ \ \widehat{H}_h^n=\widehat{H}(q^n,p^n,s^n)=\frac{1}{2}(p^n)^\intercal p^n+\frac{1}{2\eps^2}(q^n)^\intercal Aq^n+(s^n)^2-C_0. $$
	\end{mytheo}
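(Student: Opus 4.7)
I would split the energy increment into
\[
\widehat H_h^{n+1}-\widehat H_h^n \;=\; \underbrace{\tfrac{1}{2}\bigl((z^{n+1})^\intercal M z^{n+1}-(z^n)^\intercal M z^n\bigr)}_{\Delta E_z}\;+\;\underbrace{(s^{n+1})^2-(s^n)^2}_{\Delta E_s},
\]
with $z=(q^\intercal,p^\intercal)^\intercal$ and $M=\mathrm{diag}(A/\eps^2,I_d)$, and show the two pieces cancel. The $s$-part is easy: writing $(s^{n+1})^2-(s^n)^2=(s^{n+1}+s^n)(s^{n+1}-s^n)=2s^{n+1/2}(s^{n+1}-s^n)$ and substituting \eqref{s^n},
\[
\Delta E_s \;=\; -s^{n+1/2}\,(q^{n+1}-q^n)^\intercal \tilde F, \qquad \tilde F:=\frac{F(\tilde q^{n+1/2})}{\sqrt{V(\tilde q^{n+1/2})+C_0}}.
\]
So it suffices to prove $\Delta E_z=+s^{n+1/2}(q^{n+1}-q^n)^\intercal\tilde F$.

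For $\Delta E_z$, I substitute $z^{n+1}=\exp(hR)z^n+h\varphi(hR)Jg$ and expand:
\[
(z^{n+1})^\intercal M z^{n+1} = (z^n)^\intercal\exp(hR)^\intercal M\exp(hR)z^n + 2h(z^n)^\intercal\exp(hR)^\intercal M\varphi(hR)Jg + h^2 g^\intercal J^\intercal\varphi(hR)^\intercal M\varphi(hR)Jg.
\]
Since $R=JM$ with $J$ skew and $M$ symmetric, Lemma \ref{null} gives $\exp(hR)^\intercal M\exp(hR)=M$, so the first term reproduces $(z^n)^\intercal M z^n$ and only the cross and quadratic terms survive.

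It remains to evaluate those two surviving terms explicitly via the block decompositions of $\exp(hR)$ and $\varphi(hR)$ given before \eqref{q^n}-\eqref{s^n}. Recalling $g=\bigl(-\tilde F\,s^{n+1/2},\,\mathbf{0}_d\bigr)^\intercal$, and writing $C=\cos(h\sqrt A/\eps)$, $S=\sinc(h\sqrt A/\eps)$, $g_1=g_1(h\sqrt A/\eps)$, the cross term becomes a linear combination of $(q^n)^\intercal(\cdot)\tilde F$ and $(p^n)^\intercal(\cdot)\tilde F$ times $s^{n+1/2}$, while the quadratic term is of the form $h^2(s^{n+1/2})^2\tilde F^\intercal[S^2+g_1(I-C)]\tilde F$. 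On the other hand, by \eqref{q^n} the target $s^{n+1/2}(q^{n+1}-q^n)^\intercal\tilde F$ expands into the same three families of terms. Matching them reduces to two scalar matrix identities in $h\sqrt A/\eps$: the key one
\[
S^2+g_1(I-C)=2g_1
\]
for the quadratic-in-$s^{n+1/2}$ piece (which follows from $\sin^2+\cos^2=1$ together with $g_1(z)=(1-\cos z)/z^2$), and an analogous collapse for the linear-in-$s^{n+1/2}$ piece using the commutativity of $C,S,g_1$ as functions of the same matrix.

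\textbf{Main obstacle.} The conceptual step—Lemma \ref{null}—already handles the free propagation, so the heart of the proof is the bookkeeping that matches $\Delta E_z$ with $-\Delta E_s$. The only nontrivial point is verifying the quadratic identity $S^2+g_1(I-C)=2g_1$; everything else is routine expansion with the block forms. Once this identity is in hand, the match with the right-hand side produced from $\Delta E_s$ via \eqref{q^n} is immediate, and telescoping yields $\widehat H_h^{n+1}=\widehat H_h^n$ for all $n$.
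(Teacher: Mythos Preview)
Your approach is correct, but it differs from the paper's in a meaningful way. The paper avoids all block computations by introducing $\tilde g := M^{-1}g(\tilde q^{n+1/2},s^{n+1/2})$ and using the operator identity $h\varphi(hR)JM=\exp(hR)-I$, so that the update reads $z^{n+1}=\exp(hR)z^n+(\exp(hR)-I)\tilde g$. Then both $\tfrac12(z^{n+1})^\intercal Mz^{n+1}$ and $(s^{n+1})^2-(s^n)^2$ expand purely in terms of $z^n$, $\tilde g$, $\exp(hR)$ and $M$, and the cancellation follows from Lemma~\ref{null} together with the skew-symmetry of $\exp(hR)^\intercal M-M\exp(hR)$; no trigonometric identities appear. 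Because this requires $M$ invertible, the paper closes with a limiting argument $M_\alpha\to M$ to cover positive semi-definite $A$.

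Your route is more explicit: you keep the block formulas for $\exp(hR)$ and $\varphi(hR)$ and reduce everything to scalar identities in $h\sqrt A/\eps$, the decisive one being $S^2+g_1(I-C)=2g_1$. This is more bookkeeping but has the advantage that it never invokes $M^{-1}$, so it works directly for semi-definite $A$ without the perturbation step. The trade-off is elegance versus directness: the paper's argument is coordinate-free and short once $\tilde g$ is introduced, while yours is computational but self-contained. Both are valid; your sketch would benefit from actually writing out the linear-in-$s^{n+1/2}$ match (it collapses via $C-C^2-\sin^2=C-I$ for the $q^n$ part and $S(I-C)+CS=S$ for the $p^n$ part) rather than leaving it as ``analogous.''
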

	\begin{proof}
		{Although  the relationship $s=\sqrt{V(q)+C_0}$ is true,  for numerical results $s^n$ and $q^n$, the corresponding result $s^n=\sqrt{V(q^n)+C_0}$ usually does not hold anymore.} This demonstrates that the modified energy $\widehat{H}(q^n,p^n,s^n)$ is  distinct  from the original Hamiltonian energy $H_1(q^n,p^n)$. When $A$ is positive definite, $M=\begin{pmatrix}
			\frac{1}{\eps^2}A&O_d\\O_d&I_d
		\end{pmatrix}$ is nonsingular. Denoting $\tilde{g}=M^{-1}g(\tilde{q}^{n+\frac{1}{2}},s^{n+\frac{1}{2}})$,   we have $h\varphi(hR) Jg(\tilde{q}^{n+\frac{1}{2}},s^{n+\frac{1}{2}})=\varphi(hR)hR\tilde{g}=\left( \exp(hR)-I\right) \tilde{g}$ and
		\begin{align*}
			\frac{1}{2}\left( z^{n+1}\right) ^\intercal Mz^{n+1}=&\frac{1}{2}\left[(z^n)^\intercal \exp(hR)^\intercal+\tilde{g}^\intercal (\exp(hR)-I)^\intercal \right] M\left[ \exp(hR)z^n+\left( \exp(hR)-I\right) \tilde{g}\right] \\
			=&\frac{1}{2}(z^n)^\intercal \exp(hR)^\intercal M \exp(hR)z^n+(z^n)^\intercal \exp(hR)^\intercal M \left( \exp(hR)-I\right) \tilde{g}\\
			&+\frac{1}{2}\tilde{g}^\intercal (\exp(hR)-I)^\intercal M \left( \exp(hR)-I\right) \tilde{g}.
		\end{align*}
		{On the other hand}, it follows from $s^{n+1}$ in \eqref{ESAV} that
		\begin{align*}
			&(s^{n+1})^2-(s^n)^2=-\frac{(q^{n+1}-q^n)^\intercal F(\tilde{q}^{n+\frac{1}{2}})}{\sqrt{V(\tilde{q}^{n+\frac{1}{2}})+C_0}}s^{n+\frac{1}{2}}=\left( (z^{n+1})^\intercal -(z^n)^\intercal\right)M\tilde{g}\\
			&=\left[ (z^n)^\intercal(\exp(hR)-I)^\intercal+\tilde{g}^\intercal(\exp(hR)-I)^\intercal\right] M\tilde{g}
			=((z^n)^\intercal+\tilde{g}^\intercal)(\exp(hR)-I)^\intercal M\tilde{g}.
		\end{align*}
		Accordingly, we can deduce from the above results that
		\begin{align*}
			&\widehat{H}_h^{n+1}-\widehat{H}_h^n=\frac{1}{2}\left( z^{n+1}\right) ^\intercal Mz^{n+1}-\frac{1}{2}\left( z^n\right) ^\intercal Mz^n+(s^{n+1})^2-(s^n)^2\\
			&=\frac{1}{2}(z^n+\tilde{g})^\intercal [\exp(hR)^\intercal M\exp(hR)-M](z^n+\tilde{g})+\frac{1}{2}\tilde{g}^\intercal[\exp(hR)^\intercal M-M\exp(hR)]\tilde{g}
			=0,
		\end{align*}
		with {the result proposed} in Lemma \ref{null} and the skew-symmetry of $\exp(hR)^\intercal M-M\exp(hR)$.
		
		In another case where $A$ is positive semi-definite, then $M$ {might be} singular. There is no doubt that we can find a series of symmetric and nonsingular matrices
		${M_\alpha}$ which converge to $M$ when $\alpha\to 0$. Let $z_\alpha^n$ and $q_\alpha^n$ satisfy
		$$z_\alpha^{n+1}=\exp(hR_\alpha)z_\alpha^n+h\varphi(hR_\alpha) Jg(\tilde{q}_\alpha^{n+\frac{1}{2}},s_\alpha^{n+\frac{1}{2}}),\ \ s_\alpha^{n+1}=s_\alpha^n-\frac{(q_\alpha^{n+1}-q_\alpha^n)^\intercal F(\tilde{q}_\alpha^{n+\frac{1}{2}})}{2\sqrt{V(\tilde{q}_\alpha^{n+\frac{1}{2}})+C_0}}.$$
		Therefore, it still holds that
		$$\widehat{H}_\alpha(z_\alpha^{n+1},s_\alpha^{n+1})-\widehat{H}_\alpha(z_\alpha^n,s_\alpha^n)=\frac{1}{2}\left( z_\alpha^{n+1}\right) ^\intercal M_\alpha z_\alpha^{n+1}-\frac{1}{2}\left( z_\alpha^n\right) ^\intercal M_\alpha z_\alpha^n+(s_\alpha^{n+1})^2-(s_\alpha^n)^2=0.$$ In the end, when
		$\alpha\to 0$,  {we have $z_\alpha^n\to z^n$, $s_\alpha^n\to s^n$, and further} $\widehat{H}_h^{n+1}=\widehat{H}_h^n$.
	\end{proof}
	\begin{rem}
		It should be pointed out that the discrete modified energy-preserving property in Theorem \ref{ep} does not depend on the approximate term $\tilde{q}^{n+\frac{1}{2}}$ for $q^{n+\frac{1}{2}}$. This indicates that {various} numerical approximations for $q^{n+\frac{1}{2}}$ can be chosen without affecting its energy-preserving property. 
	\end{rem}
	
	\section{Extension to charged-particle dynamics}\label{sec3}
	In this part, we concentrate on constructing the numerical solutions of the CPD by combining the ideas of splitting and {the scalar auxiliary variable {in Section \ref{sec2}}.}
	\subsection{Numerical methods}Assuming that the scalar potential $U(x)$ is bounded from below, i.e., there exists $c_0>0$ such that $U(x)\ge -c_0$, we introduce a scalar auxiliary variable $r(t)=\sqrt{U(x)+C_0}$ with $r(0)=\sqrt{U(x_0)+C_0}:=r_0$ and $C_0>c_0$. Then the CPD \eqref{CPD} is equivalent to
	\begin{equation}\label{new CPD}
		\frac{d}{dt}\begin{pmatrix}
			x\\v\\r
		\end{pmatrix}=
		\begin{pmatrix}
			v\\
			\widehat{B}(x)v+\frac{E(x)}{\sqrt{U(x)+C_0}}r\\
			-\frac{\dot{x}^\intercal E(x)}{2\sqrt{U(x)+C_0}}
		\end{pmatrix},\quad
		\begin{pmatrix}
			x(0)\\v(0)\\r(0)
		\end{pmatrix}=
		\begin{pmatrix}
			x_0\\v_0\\r_0
		\end{pmatrix},
	\end{equation}
	where $v_0:=\dot{x}_0$.
	In order to get the numerical solution of the system \eqref{new CPD}, we split it into two subflows:
	\begin{equation}\label{splitting}
		\frac{d}{dt}\begin{pmatrix}
			x\\v\\r
		\end{pmatrix}=\begin{pmatrix}
			0\\\widehat{B}(x)v\\0
		\end{pmatrix},\quad\frac{d}{dt}\begin{pmatrix}
			x\\v\\r
		\end{pmatrix}=\begin{pmatrix}
			v\\
			\frac{E(x)}{\sqrt{U(x)+C_0}}r\\
			-\frac{\dot{x}^\intercal E(x)}{2\sqrt{U(x)+C_0}}
		\end{pmatrix}.
	\end{equation}
	For the first subflow, it is easy to derive its exact solution
	$\Phi_t^L:\begin{pmatrix}
		x(t)\\v(t)\\r(t)
	\end{pmatrix}=\begin{pmatrix}
		x(0)\\e^{t\widehat{B}(x(0))}v(0)\\r(0)
	\end{pmatrix}.$
	Since the second subflow without the scalar $r(t)$ is a canonical Hamiltonian system, we can apply the E2-SAV \eqref{q^n}-\eqref{s^n} to this subflow to get a linearly implicit numerical propagator $\Phi^{NL}_h$:
	\begin{align}
		&x^{n+1}=x^n+hv^n+\frac{h^2}{2}\frac{E(\widehat{x}^{n+\frac{1}{2}})}{\sqrt{U(\widehat{x}^{n+\frac{1}{2}})+C_0}}r^{n+\frac{1}{2}}\label{x^n},\\
		&v^{n+1}=v^n+h\frac{E(\widehat{x}^{n+\frac{1}{2}})}{\sqrt{U(\widehat{x}^{n+\frac{1}{2}})+C_0}}r^{n+\frac{1}{2}}\label{v^n},\\
		&r^{n+1}
		=r^n-\frac{(x^{n+1}-x^n)^\intercal E(\widehat{x}^{n+\frac{1}{2}})}{2\sqrt{U(\widehat{x}^{n+\frac{1}{2}})+C_0}},\label{r^n}
	\end{align}
	with approximate term $\widehat{x}^{n+\frac{1}{2}}=x^n+\frac{h}{2}v^n$.
	
	In this manner, the {propagator $\Phi^{NL}_h$} naturally enjoys the high efficiency as follows. First, we can obtain $x^{n+1}$ by using the notation $r^{n+\frac{1}{2}}=\frac{r^{n+1}+r^n}{2}$ and \eqref{r^n} to eliminate the implicit term  $r^{n+1}$ in \eqref{x^n}. Secondly, $r^{n+1}$ is {formulated} from \eqref{r^n}. Finally, we get $v^{n+1}$ by substituting $r^{n+1}$ into \eqref{v^n}. In conclusion, the total cost to get the numerical solution is only to solve a linear equation \eqref{x^n} with constant coefficients. Indeed, we obtain the following explicit expression of $\Phi^{NL}_h$:
	{\begin{equation*}
			\begin{aligned}
				&x^{n+1}=x^n+A_n\left(hv^n+\frac{h^2}{2}\frac{E(\widehat{x}^{n+\frac{1}{2}})}{\sqrt{U(\widehat{x}^{n+\frac{1}{2}})+C_0}}r^n\right),\\
				&v^{n+1}=B_nv^n+c_nh\frac{E(\widehat{x}^{n+\frac{1}{2}})}{\sqrt{U(\widehat{x}^{n+\frac{1}{2}})+C_0}}r^n,\;\ \ \ r^{n+1}=b_nr^n-\frac{hE(\widehat{x}^{n+\frac{1}{2}})^\intercal A_n}{2\sqrt{U(\widehat{x}^{n+\frac{1}{2}})+C_0}}v^n,
			\end{aligned}
		\end{equation*}
		where the matrices are  $A_n=I_3-\frac{h^2}{8a_n}\frac{E(\widehat{x}^{n+\frac{1}{2}})E(\widehat{x}^{n+\frac{1}{2}})^\intercal}{U(\widehat{x}^{n+\frac{1}{2}})+C_0}$, $B_n=I_3-\frac{h^2}{4}\frac{E(\widehat{x}^{n+\frac{1}{2}})E(\widehat{x}^{n+\frac{1}{2}})^\intercal A_n}{U(\widehat{x}^{n+\frac{1}{2}})+C_0}$, and the constants are $a_n=1+\frac{h^2}{8}\frac{\abs{E(\widehat{x}^{n+\frac{1}{2}})}^2}{U(\widehat{x}^{n+\frac{1}{2}})+C_0}$, $b_n=1-\frac{h^2}{4}\frac{E(\widehat{x}^{n+\frac{1}{2}})^\intercal A_nE(\widehat{x}^{n+\frac{1}{2}})}{U(\widehat{x}^{n+\frac{1}{2}})+C_0}$, $c_n=\frac{b_n+1}{2}$.}
	
	{In the light of  the above  preparations, we are now} in the position to present our methods.
	\begin{algo}\label{SAVs}
		Supposing that the numerical solutions are $x^n\approx x(t_n)$, $v^n \approx v(t_n)$ and choosing {the initial values as}
		$x^0 = x_0$, $v^0 = v_0$, then through composition of $\Phi^{NL}_h$ and $\Phi^L_h$, we obtain the full scheme such as $\Phi^{S1}_h=\Phi^{NL}_h\circ \Phi^L_h$
		, which is known as the first order splitting scheme. {More specifically, this iterative} scheme for solving \eqref{new CPD} reads as: for $n \ge 0$,
		\begin{equation}\label{im-S1-SAV}
			\begin{aligned}
				&x^{n+1}=x^n+he^{h\widehat{B}(x^n)}v^n+\frac{h^2}{2}\frac{E(\tilde{x}^{n+\frac{1}{2}})}{\sqrt{U(\tilde{x}^{n+\frac{1}{2}})+C_0}}r^{n+\frac{1}{2}},\\
				&v^{n+1}=e^{h\widehat{B}(x^n)}v^n+h\frac{E(\tilde{x}^{n+\frac{1}{2}})}{\sqrt{U(\tilde{x}^{n+\frac{1}{2}})+C_0}}r^{n+\frac{1}{2}},
				\;\ \ r^{n+1}
				=r^n-\frac{(x^{n+1}-x^n)^\intercal E(\tilde{x}^{n+\frac{1}{2}})}{2\sqrt{U(\tilde{x}^{n+\frac{1}{2}})+C_0}},
			\end{aligned}
		\end{equation}
		where $\tilde{x}^{n+\frac{1}{2}}=x^n+\frac{h}{2}e^{h\widehat{B}(x^n)}v^n.$	
		We shall refer to this scheme as S1-SAV.
		
		Moreover, we can take symmetric Strang splitting: $$\Phi^{S2}_h=\Phi^L_\frac{h}{2}\circ\Phi^{NL}_h\circ \Phi^L_\frac{h}{2},$$ called S2-SAV later. {Now, we will describe how the symmetric Strang splitting scheme can be used to construct linearly implicit methods of higher order by Triple Jump splitting. To begin with, we get: $$\Phi^{S4}_h=\Phi^{S2}_{\tau_1h}\circ\Phi^{S2}_{\tau_2h}\circ \Phi^{S2}_{\tau_3h},$$ where $\tau_1=\tau_3=\frac{1}{2-2^{1/3}}$ and  $\tau_2=-\frac{2^{1/3}}{2-2^{1/3}}$ satisfy $\sum_{i=1}^{3}\tau_i=1$ and $\sum_{i=1}^{3}(\tau_i)^3=0$, and we will refer to this scheme as S4-SAV. And then} a new scheme S6-SAV of order-6 can be constructed as: $$\Phi^{S6}_h=\Phi^{S4}_{\theta_1h}\circ\Phi^{S4}_{\theta_2h}\circ \Phi^{S4}_{\theta_3h},$$ where $\theta_1=\theta_3=\frac{1}{2-2^{1/5}}$ and  $\theta_2=-\frac{2^{1/5}}{2-2^{1/5}}$ satisfy $\sum_{i=1}^{3}\theta_i=1$ and $\sum_{i=1}^{3}(\theta_i)^5=0$. Without loss of generality, by repeating this process, we can get  linearly implicit methods with arbitrary even high order.
	\end{algo}

	\begin{rem}
		It should be noted that the method S1-SAV can be expressed in an explicit form:
	{\begin{equation}\label{ex-S1-SAV}
			\begin{aligned} &x^{n+1}=x^n+\tilde{A}_n\left(he^{h\widehat{B}(x^n)}v^n+\frac{h^2}{2}\frac{E(\tilde{x}^{n+\frac{1}{2}})}{\sqrt{U(\tilde{x}^{n+\frac{1}{2}})+C_0}}r^n\right),\\	&v^{n+1}=\tilde{B}_ne^{h\widehat{B}(x^n)}v^n+\tilde{c}_nh\frac{E(\tilde{x}^{n+\frac{1}{2}})}
				{\sqrt{U(\tilde{x}^{n+\frac{1}{2}})+C_0}}r^n,\;\ \ \ r^{n+1}=\tilde{b}_nr^n-\frac{hE(\tilde{x}^{n+\frac{1}{2}})
					^\intercal \tilde{A}_n}{2\sqrt{U(\tilde{x}^{n+\frac{1}{2}})+C_0}}e^{h\widehat{B}(x^n)}v^n,
			\end{aligned}
		\end{equation}
		where the matrices are  $\tilde{A}_n=I_3-\frac{h^2}{8\tilde{a}_n}\frac{E(\tilde{x}^{n+\frac{1}{2}})E(\tilde{x}^{n+\frac{1}{2}})^\intercal}{U(\tilde{x}^{n+\frac{1}{2}})+C_0}$, $\tilde{B}_n=I_3-\frac{h^2}{4}\frac{E(\tilde{x}^{n+\frac{1}{2}})E(\tilde{x}^{n+\frac{1}{2}})^\intercal A_n}{U(\tilde{x}^{n+\frac{1}{2}})+C_0}$, and the constants are  $\tilde{a}_n=1+\frac{h^2}{8}\frac{\abs{E(\tilde{x}^{n+\frac{1}{2}})}^2}{U(\tilde{x}^{n+\frac{1}{2}})+C_0}$,  $\tilde{b}_n=1-\frac{h^2}{4}\frac{E(\tilde{x}^{n+\frac{1}{2}})^\intercal \tilde{A}_nE(\tilde{x}^{n+\frac{1}{2}})}{U(\tilde{x}^{n+\frac{1}{2}})+C_0}$, $\tilde{c}_n=\frac{\tilde{b}_n+1}{2}$.}
		The expressions of other schemes are similar to S1-SAV but of complicated form,  and thus we omit them for brevity. In addition, we shall abbreviate the above class of linearly implicit splitting schemes as SSAVs.
	\end{rem}

	\subsection{Energy-preserving properties}
	In this subsection, we focus on the energy-preserving properties of  SSAVs for the CPD in Algorithm  \ref{SAVs}.
	\begin{lem}\label{me}
		The second subflow in \eqref{splitting} conserves the following modified energy:
		$$\tilde{H}(v,r)=\frac{1}{2}\norm{v}^2+r^2-C_0=\tilde{H}(v_0,r_0).$$
	\end{lem}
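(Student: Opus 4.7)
The plan is to prove conservation by direct differentiation of $\tilde H(v,r)=\tfrac{1}{2}\norm{v}^2+r^2-C_0$ along the second subflow in \eqref{splitting}, in complete analogy with the modified-energy proof for the OSDE reformulation \eqref{new-oso} given earlier in the paper. Since $\tilde H$ does not depend on $x$, only the evolution equations for $v$ and $r$ are needed explicitly, together with the identity $\dot{x}=v$ read off from the first component of the subflow.

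The key step is to write $\frac{d}{dt}\tilde H(v,r) = v^\intercal \dot v + 2r\dot r$ and substitute the right-hand sides $\dot v = \tfrac{E(x)}{\sqrt{U(x)+C_0}}\,r$ and $\dot r = -\tfrac{\dot x^\intercal E(x)}{2\sqrt{U(x)+C_0}} = -\tfrac{v^\intercal E(x)}{2\sqrt{U(x)+C_0}}$. The first contribution becomes $\tfrac{r\,v^\intercal E(x)}{\sqrt{U(x)+C_0}}$ while the second, with the factor $2r$ from differentiating $r^2$, becomes exactly its negative, so the two terms cancel and $\frac{d}{dt}\tilde H(v,r)=0$. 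Integrating from $0$ to $t$ then yields $\tilde H(v(t),r(t))=\tilde H(v_0,r_0)$ as claimed.

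There is no substantive obstacle here: the SAV variable $r=\sqrt{U(x)+C_0}$ and the form of its evolution equation were engineered precisely so that the nonlinear coupling appearing in $\dot v$ is matched by the chain-rule evolution of $r^2$. The only care required is tracking the sign and the factor of $1/2$ built into $\dot r$, which is consistent with the factor of $2r$ produced when differentiating $r^2$. Because this is essentially the same cancellation used in the proof of the first lemma of Section~\ref{sec2}, the argument is short enough to present in one line and does not require any auxiliary result such as Lemma \ref{null}.
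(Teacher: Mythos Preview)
Your proposal is correct and follows essentially the same approach as the paper: both proofs compute $\frac{d}{dt}\tilde H(v,r)=v^\intercal\dot v+2r\dot r$ along the second subflow, substitute the equations for $\dot v$ and $\dot r$ (using $\dot x=v$), and observe that the two contributions cancel. The only minor addition in the paper is the remark that, at the continuous level, $\tilde H(v,r)=H_2(x,v)$, so the second subflow in fact preserves the original energy \eqref{H(x,v)} as well.
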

	\begin{proof}
		Taking the inner {product} with $v$ of the second  equation in  the second subflow, we have $$\frac{d}{dt}\left(\frac{1}{2}\norm{v}^2 \right) =v^\intercal\dot{v}=-\frac{v^\intercal\nabla U(x)}{\sqrt{U(x)+C_0}}r=-2\dot{r}r=-\frac{d}{dt}r^2=-\frac{d}{dt}\left( r^2-C_0\right).$$
		{Thence, it is obtained that} $\frac{d}{dt}\tilde{H}(v,r)=0.$ Actually, $\tilde{H}(v,r)=H_2(x,v)$, {and thus} the second subflow exactly preserves the energy \eqref{H(x,v)}.
	\end{proof}
	
	\begin{mytheo}\label{energy-preserving}
		Algorithm \ref{SAVs} {exactly} preserves the following discrete modified energy:
		$$\tilde{H}^{n+1}_h=\tilde{H}^n_h\quad {\textmd{with}} \ \ \tilde{H}^n_h=\tilde{H}(v^n,r^n)=\frac{1}{2}\norm{v^n}^2+(r^n)^2-C_0,$$
		{where $n=0,1,\ldots,T/h-1$.}
	\end{mytheo}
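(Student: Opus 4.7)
The strategy is to verify that both building blocks of the composed method, namely $\Phi^L_h$ and $\Phi^{NL}_h$, individually preserve the modified energy $\tilde{H}(v,r)=\frac{1}{2}\norm{v}^2+r^2-C_0$, and then to invoke the fact that a composition of invariant-preserving maps preserves the invariant. Since every member of Algorithm \ref{SAVs} (S1-SAV, S2-SAV, S4-SAV, S6-SAV, and all higher-order triple-jump compositions) is just a composition of $\Phi^L_{\tau h}$ and $\Phi^{NL}_{\tau h}$ with various step sizes $\tau h$, the theorem reduces to these two lemmas.

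\textbf{Step 1 (the linear flow $\Phi^L_{h}$).} Here the exact update is $x\mapsto x$, $r\mapsto r$, and $v\mapsto e^{h\widehat{B}(x)}v$. Because $\widehat{B}(x)$ is skew-symmetric, $e^{h\widehat{B}(x)}$ is orthogonal, hence $\norm{v}$ is unchanged. Combined with the invariance of $r$, this gives $\tilde{H}\circ\Phi^L_{h}=\tilde{H}$.

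\textbf{Step 2 (the nonlinear flow $\Phi^{NL}_{h}$).} I would compute the energy increment directly from \eqref{x^n}--\eqref{r^n}. Writing $v^{n+\frac{1}{2}}:=\frac{1}{2}(v^{n+1}+v^n)$ and using \eqref{v^n}, the telescoping identity
$$\tfrac{1}{2}\norm{v^{n+1}}^2-\tfrac{1}{2}\norm{v^n}^2=(v^{n+\frac{1}{2}})^\intercal(v^{n+1}-v^n)=h(v^{n+\frac{1}{2}})^\intercal\frac{E(\widehat{x}^{n+\frac{1}{2}})}{\sqrt{U(\widehat{x}^{n+\frac{1}{2}})+C_0}}r^{n+\frac{1}{2}}$$
emerges, while \eqref{r^n} gives
$$(r^{n+1})^2-(r^n)^2=-r^{n+\frac{1}{2}}\,\frac{(x^{n+1}-x^n)^\intercal E(\widehat{x}^{n+\frac{1}{2}})}{\sqrt{U(\widehat{x}^{n+\frac{1}{2}})+C_0}}.$$
These two terms cancel if and only if $x^{n+1}-x^n=hv^{n+\frac{1}{2}}$. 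The crucial algebraic observation is that this identity follows straight from \eqref{x^n} and \eqref{v^n}: indeed
$$x^{n+1}-x^n=hv^n+\tfrac{h^2}{2}\tfrac{E(\widehat{x}^{n+\frac{1}{2}})}{\sqrt{U(\widehat{x}^{n+\frac{1}{2}})+C_0}}r^{n+\frac{1}{2}}=hv^n+\tfrac{h}{2}(v^{n+1}-v^n)=hv^{n+\frac{1}{2}}.$$
Substituting this back yields $\tilde{H}(v^{n+1},r^{n+1})=\tilde{H}(v^n,r^n)$. (Alternatively one could appeal to Theorem \ref{ep} applied to the second subflow in \eqref{splitting}, treating it as an OSDE with $A\equiv 0$; the $A=0$ case is covered by the positive semi-definite/limiting branch of the proof of Theorem \ref{ep}.)

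\textbf{Step 3 (composition).} Having established $\tilde H\circ\Phi^L_{\tau h}=\tilde H$ and $\tilde H\circ\Phi^{NL}_{\tau h}=\tilde H$ for every step $\tau h\ge 0$, the conservation law propagates through any finite composition. Consequently $\Phi^{S1}_h$, $\Phi^{S2}_h$, $\Phi^{S4}_h$, $\Phi^{S6}_h$ and all further Triple-Jump compositions all satisfy $\tilde{H}^{n+1}_h=\tilde{H}^n_h$ for $n=0,1,\ldots,T/h-1$.

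The only genuinely non-trivial point is the identity $x^{n+1}-x^n=hv^{n+\frac{1}{2}}$ in Step 2; once this is spotted the cancellation is immediate, and the rest of the argument is structural. I do not foresee a real obstacle because the SAV construction was designed precisely so that the quadratic auxiliary variable $r$ absorbs the work done by the nonlinear force.
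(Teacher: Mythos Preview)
Your proof is correct and follows the same structural plan as the paper: verify that $\Phi^L$ and $\Phi^{NL}$ each preserve $\tilde H$, then propagate through compositions. The paper handles $\Phi^{NL}_h$ by simply invoking Theorem~\ref{ep} (E2-SAV energy preservation for OSDE, applied with $A=0$), whereas your Step~2 gives a self-contained three-line computation via the identity $x^{n+1}-x^n=hv^{n+\frac12}$; you even note the paper's route as an alternative. Your argument is more elementary and makes transparent exactly why the cancellation occurs, while the paper's version has the advantage of reusing machinery already built. One small correction: in Step~3 you state the building-block conservation ``for every step $\tau h\ge 0$'', but the Triple-Jump compositions S4-SAV and S6-SAV use \emph{negative} substeps ($\tau_2<0$, $\theta_2<0$); fortunately your computations in Steps~1 and~2 are purely algebraic and nowhere use the sign of $h$, so simply drop the restriction $\ge 0$.
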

	\begin{proof}
		{On the basis of the fact that the propagator $\Phi^L_t$ is the exact solution operator of the first subflow, it follows that} it preserves the energy \eqref{H(x,v)}. {By Lemma \ref{me} and noticing} that $\tilde{H}(v,r)=H_2(x,v)$, {one gets that the propagator $\Phi^L_t$ exactly preserves the energy $\tilde{H}(v,r)$.} On the other hand, for the propagator $\Phi_h^{NL}$, using Theorem \ref{ep}, one has $\tilde{H}^{n+1}_h=\tilde{H}^n_h$.
		
		According to Algorithm \ref{SAVs}, we  are aware  that  all the proposed  methods are constructed by composing $\Phi^{NL}_h$ and $\Phi^L_h$. Therefore, the energy conservation  $\tilde{H}^{n+1}_h=\tilde{H}^n_h$ holds for S1-SAV, S2-SAV, S4-SAV and S6-SAV.
	\end{proof}
	\section{Convergence}\label{sec4}
	In this section, we provide rigorous error estimates of our methods for CPD. With the same arguments,  the proof is easily presented for the method E2-SAV applied to OSDE and we omit it  for  brevity.
	For the sake of simplicity, we only consider the CPD in a constant magnetic, i.e. $B(x)\equiv B$ and the nonlinear term $E(x)$ satisfies $E(\textbf{0}_3)=\textbf{0}_3$. For non-homogeneous magnetic fields,  a  linearized system can be considered and then by deriving the errors between the original and linearized systems,  the convergence can be transformed to be studied for the system with a constant magnetic.
	Throughout this section, $\norm{\cdot}$ denotes the Euclidean norm in finite dimensional space and $\abs{\cdot}$ refers to the absolute value of a function.
	In what follows, to be precise with the methods, we
	consider here S1-SAV as an example.
	
	\begin{lem}[Local boundedness of numerical solutions]\label{local bounded}
		{For the nonlinear function  $E: \mathbb{R}^3 \rightarrow \mathbb{R}^3$, it is assumed that it is sufficient differentiable and satisfies Lipschitz condition, i.e., there exists $L>0$ such that $\norm{E(x(t))-E(\tilde{x}(t))} \le L\norm{x(t)-\tilde{x}(t)}$ for all $t\in [0,T]$. There exists a sufficient small $0<\beta \le 1$, such that  if $0<h\le \beta$ and the numerical solutions S1-SAV at $t_n$ is bounded, i.e., {$\norm{x^n}\le K, \norm{v^n}\le K, \abs{r^n}\le K$} for $K\ge 0$, then we have
			$$\norm{x^{n+1}}\le C_K,\quad\norm{v^{n+1}}\le C_K,\quad\abs{r^{n+1}}\le C_K, $$
			where $C_K$ is independent of  the stepsize $h$ and $n$.}
	\end{lem}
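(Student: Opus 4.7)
The plan is to exploit the explicit form of S1-SAV given in \eqref{ex-S1-SAV} together with three structural facts that come for free in the constant-magnetic-field setting: (i) $\widehat{B}$ is skew-symmetric, hence $e^{h\widehat{B}}$ is orthogonal and $\|e^{h\widehat{B}(x^n)}v^n\|=\|v^n\|$; (ii) the assumption $V(q)+C_0 \ge C_0-c_0 > 0$ (analogously $U(x)+C_0 \ge C_0-c_0 > 0$) gives a uniform lower bound on $\sqrt{U(\tilde x^{n+1/2})+C_0}$; and (iii) from $E(\mathbf{0}_3)=\mathbf{0}_3$ and the Lipschitz hypothesis we get the linear growth bound $\|E(x)\|\le L\|x\|$, so the nonlinearity can be controlled by the current iterate.

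First I would derive an $h$-uniform bound on the intermediate point $\tilde x^{n+1/2}=x^n+\tfrac{h}{2}e^{h\widehat{B}(x^n)}v^n$: using (i) and $h\le\beta\le 1$, one has $\|\tilde x^{n+1/2}\|\le K+\tfrac{\beta}{2}K\le \tfrac{3K}{2}$, and (iii) then gives $\|E(\tilde x^{n+1/2})\|\le \tfrac{3LK}{2}$. Combined with (ii) the quantity $E(\tilde x^{n+1/2})/\sqrt{U(\tilde x^{n+1/2})+C_0}$ is bounded by a constant depending only on $K$, $L$, $C_0$, $c_0$. Next, I would bound the auxiliary matrices and scalars from the explicit form: since $\tilde a_n\ge 1$, the matrix $\tilde A_n=I_3-\tfrac{h^2}{8\tilde a_n}\tfrac{EE^\intercal}{U+C_0}$ has operator norm bounded by $1+\tfrac{h^2}{8}\|E\|^2/(U+C_0)$, and similarly $\tilde B_n$, $\tilde b_n$, $\tilde c_n$ are bounded by constants depending only on $K$ (and $\beta$).

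With these ingredients in place, I would read off the three bounds directly from \eqref{ex-S1-SAV}. For $x^{n+1}$, applying the triangle inequality to the explicit update and using $\|\tilde A_n\|\le C$, $\|e^{h\widehat{B}(x^n)}v^n\|=\|v^n\|\le K$ and the nonlinear-term bound yields $\|x^{n+1}\|\le K + C(hK + h^2 \cdot C' K)$, which for $h\le \beta\le 1$ is controlled by a constant $C_K$. For $v^{n+1}$, an identical estimate applies since $\tilde B_n$ is bounded and the orthogonality of $e^{h\widehat{B}}$ again preserves the norm. For $r^{n+1}$ I would use the last line of \eqref{ex-S1-SAV}: $|r^{n+1}|\le |\tilde b_n|K + h \cdot C \cdot \|v^n\|$, again yielding a bound of the form $C_K$.

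The main obstacle is not the inequality chain itself, which is routine, but rather choosing $\beta$ small enough that all the auxiliary-matrix bounds (in particular $\|\tilde A_n\|$, $\|\tilde B_n\|$ and $|\tilde b_n|$) remain $O(1)$ uniformly in $n$, and keeping track of the fact that the constant $C_K$ absorbs $L$, $C_0$, $c_0$, and the spectral data of $B$ but is genuinely independent of both $h$ and $n$. Once $\beta$ is fixed so that, say, all auxiliary quantities are bounded by $2$, the final $C_K$ can be taken as an explicit polynomial expression in $K$, $L$, and $(C_0-c_0)^{-1/2}$, completing the argument.
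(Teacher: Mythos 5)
Your proposal is correct and follows essentially the same route as the paper: both work from the explicit form \eqref{ex-S1-SAV}, use $\norm{e^{h\widehat{B}}}=1$, the lower bound $\sqrt{U+C_0}\ge\sqrt{C_0-c_0}$, and $\norm{E(x)}\le L\norm{x}$ to bound each update by a constant depending only on $K$, $L$ and $C_0-c_0$. The only cosmetic difference is that the paper asserts $\norm{\tilde{A}_n},\norm{\tilde{B}_n},\tilde{b}_n,\tilde{c}_n<1$ and sums the three inequalities, whereas you bound these auxiliary quantities by $K$-dependent constants and treat the three components separately, which is equally sufficient.
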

	\begin{proof}
		Using Minkowski’s inequality to the explicit form \eqref{ex-S1-SAV} of S1-SAV, we can get
		\begin{align*}
			&\norm{x^{n+1}}\le \norm{x^n}+h\norm{e^{h\widehat{B}}}\norm{v^n}+\frac{h^2}{2}\norm{\frac{E(\tilde{x}^{n+\frac{1}{2}})}{\sqrt{U(\tilde{x}^{n+\frac{1}{2}})+C_0}}}\abs{r^n},\\
			&\norm{v^{n+1}}\le \norm{e^{h\widehat{B}}}\norm{v^n}+h\norm{\frac{E(\tilde{x}^{n+\frac{1}{2}})}{\sqrt{U(\tilde{x}^{n+\frac{1}{2}})+C_0}}}\abs{r^n},\\
			&\abs{r^{n+1}}\le \abs{r^n}+\frac{h}{2}\norm{\frac{E(\tilde{x}^{n+\frac{1}{2}})}{\sqrt{U(\tilde{x}^{n+\frac{1}{2}})+C_0}}}\norm{e^{h\widehat{B}}}\norm{v^n},
		\end{align*}
		{where the fact that $0<\norm{\tilde{A}_n}, \norm{\tilde{B}_n}, \tilde{b}_n, \tilde{c}_n<1$ is used here.} 
		Summing up these equations and using  the facts $\norm{e^{h\widehat{B}}}=1$, $\sqrt{U(x)+C_0}\ge \sqrt{C_0-c_0}$ and $\norm{E(x)}=\norm{E(x)-E(\textbf{0}_3)}\le L\norm{x}$ {lead} to
		\begin{align*}
			&\norm{x^{n+1}}+\norm{v^{n+1}}+\abs{r^{n+1}}\le \norm{x^n}+h\norm{e^{h\widehat{B}}}\norm{v^n}+\norm{e^{h\widehat{B}}}\norm{v^n}+\abs{r^n}\\
			&+\frac{h^2}{2}\norm{\frac{E(\tilde{x}^{n+\frac{1}{2}})}{\sqrt{U(\tilde{x}^{n+\frac{1}{2}})+C_0}}}\abs{r^n}+h\norm{\frac{E(\tilde{x}^{n+\frac{1}{2}})}{\sqrt{U(\tilde{x}^{n+\frac{1}{2}})+C_0}}}\abs{r^n}+\frac{h}{2}\norm{\frac{E(\tilde{x}^{n+\frac{1}{2}})}{\sqrt{U(\tilde{x}^{n+\frac{1}{2}})+C_0}}}\norm{e^{h\widehat{B}}}\norm{v^n}\\
			&\le (1+h) (\norm{x^n}+\norm{v^n}+\abs{r^n})+{\frac{2h}{\sqrt{C_0-c_0}}}\norm{E(\tilde{x}^{n+\frac{1}{2}})}(\norm{v^n}+\abs{r^n})\\
			&\le (1+h) (\norm{x^n}+\norm{v^n}+\abs{r^n})+{\frac{2h}{\sqrt{C_0-c_0}}}L\left( \norm{x^n}+\frac{h}{2}\norm{e^{h\widehat{B}}}\norm{v^n}\right) (\norm{v^n}+\abs{r^n})\\
			&\le (1+h) (\norm{x^n}+\norm{v^n}+\abs{r^n})+{\frac{2h}{\sqrt{C_0-c_0}}}L( \norm{x^n}+\norm{v^n}) (\norm{v^n}+\abs{r^n})\le C_K.
		\end{align*}
	\end{proof}
	
	\begin{lem}[Global boundedness of numerical solutions]\label{global bounded}
		{Under the conditions of Lemma \ref{local bounded} and  the requirement that {$\norm{x^0}\le \widehat{K}, \norm{v^0}\le \widehat{K}, \abs{r^0}\le \widehat{K}$} for $\widehat{K}\ge 0$, the numerical solution produced by  S1-SAV  is bounded as
			$$\norm{x^{n+1}}\le C_{\widehat{K}},\quad\norm{v^{n+1}}\le C_{\widehat{K}},\quad\abs{r^{n+1}}\le C_{\widehat{K}},$$
			where $C_{\widehat{K}}$ is independent of  the stepsize $h$ and $n$.}
	\end{lem}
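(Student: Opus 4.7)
The plan is to combine the conservation law established in Theorem \ref{energy-preserving} with the one-step estimate of Lemma \ref{local bounded} and close the argument by a discrete Gronwall inequality applied only to the position component.

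First I would extract uniform bounds on $v^n$ and $r^n$ directly from the discrete modified energy. Since Theorem \ref{energy-preserving} gives $\tilde{H}_h^n = \tilde{H}_h^0$ for all $n$, the identity
\begin{equation*}
\tfrac{1}{2}\norm{v^n}^2 + (r^n)^2 = \tfrac{1}{2}\norm{v^0}^2 + (r^0)^2
\end{equation*}
and the hypothesis $\norm{v^0}\le\widehat{K}$, $\abs{r^0}\le\widehat{K}$ immediately yield $\norm{v^n}\le 2\widehat{K}$ and $\abs{r^n}\le \sqrt{2}\,\widehat{K}$ for every $n\ge 0$, with constants independent of $h$ and $n$. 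So the velocity and auxiliary-variable components are globally bounded without any Gronwall argument.

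Next I would derive a one-step recurrence for the position. Starting from the explicit form \eqref{ex-S1-SAV}, Minkowski's inequality together with $\norm{\tilde{A}_n}<1$, $\norm{e^{h\widehat{B}}}=1$, and $\sqrt{U(\tilde{x}^{n+\frac{1}{2}})+C_0}\ge\sqrt{C_0-c_0}$ gives
\begin{equation*}
\norm{x^{n+1}} \le \norm{x^n} + h\,\norm{v^n} + \tfrac{h^2}{2\sqrt{C_0-c_0}}\norm{E(\tilde{x}^{n+\frac{1}{2}})}\abs{r^n}.
\end{equation*}
Applying the Lipschitz bound $\norm{E(\tilde{x}^{n+\frac{1}{2}})}\le L\norm{\tilde{x}^{n+\frac{1}{2}}}\le L(\norm{x^n}+\tfrac{h}{2}\norm{v^n})$ and the $v^n$, $r^n$ bounds already obtained produces, for some constants $c_1,c_2$ depending only on $\widehat{K}$, $L$, $C_0$, $c_0$,
\begin{equation*}
\norm{x^{n+1}} \le (1+c_1 h)\,\norm{x^n} + c_2 h.
\end{equation*}

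Finally, a standard discrete Gronwall lemma applied to this inequality on $0\le n\le T/h-1$ gives $\norm{x^n}\le (\widehat{K}+c_2 T)\,e^{c_1 T}=: C_{\widehat{K}}$ uniformly in $h$ and $n$. Together with the previously established $\norm{v^n}\le 2\widehat{K}$ and $\abs{r^n}\le\sqrt{2}\,\widehat{K}$ (possibly enlarging $C_{\widehat{K}}$ so that all three share the same constant), this completes the proof. The only nontrivial point is the observation that the energy identity decouples the bound on $(v^n,r^n)$ from that on $x^n$, so the Gronwall argument has to be invoked only for the position and the constant $C_{\widehat{K}}$ remains independent of $h$ on any fixed interval $[0,T]$.
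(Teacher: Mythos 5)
Your proof is correct, but it takes a genuinely different route from the paper. The paper's own argument is a one-line induction: it iterates the one-step estimate of Lemma \ref{local bounded} ("by mathematical induction, quite similar to Lemma \ref{local bounded}"), so it never invokes the conservation property and stays entirely inside Section 4; the price is that the recursion inherited from Lemma \ref{local bounded} contains the superlinear product term $(\norm{x^n}+\norm{v^n})(\norm{v^n}+\abs{r^n})$, and keeping the resulting constant independent of $n$ over $n\le T/h$ requires exactly the kind of Gronwall-type bookkeeping that the paper leaves implicit. You instead use Theorem \ref{energy-preserving} to get $\tfrac12\norm{v^n}^2+(r^n)^2=\tfrac12\norm{v^0}^2+(r^0)^2$, which bounds $\norm{v^n}$ and $\abs{r^n}$ uniformly without any iteration (there is no circularity, since the proof of Theorem \ref{energy-preserving} is purely algebraic and does not use boundedness), and then the recursion for $\norm{x^{n+1}}$ obtained from \eqref{ex-S1-SAV} (using $\norm{\tilde{A}_n}\le 1$, $\norm{e^{h\widehat{B}}}=1$, $\sqrt{U+C_0}\ge\sqrt{C_0-c_0}$, the Lipschitz bound with $E(\textbf{0}_3)=\textbf{0}_3$, and $h\le\beta\le1$ to absorb $h^2$ into $h$) becomes linear, $\norm{x^{n+1}}\le(1+c_1h)\norm{x^n}+c_2h$, so discrete Gronwall gives the explicit constant $C_{\widehat{K}}=(\widehat{K}+c_2T)e^{c_1T}$, depending only on $\widehat{K}$, $L$, $C_0$, $c_0$ and $T$. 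Your decoupling of $(v^n,r^n)$ from $x^n$ buys cleaner, fully explicit constants and sidesteps the quadratic term; the paper's induction buys independence from the energy-conservation theorem (it would still work for variants of the scheme that only approximately preserve $\tilde{H}$). Both are valid; yours is arguably the more complete writeup of the step the paper compresses.
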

	\begin{proof}
		By mathematical induction, the proof is quite similar to Lemma \ref{local bounded} and therefore we leave out the details to keep concise.
	\end{proof}
	\begin{mytheo}
		Under the  assumptions in Lemma \ref{local bounded} {and Lemma \ref{global bounded}}, supposing that the CPD \eqref{CPD} has sufficiently smooth solutions, then the convergence of the scheme \eqref{im-S1-SAV} is given by
		$$\norm{x(t_n)-x^n}\le Ch,\quad \norm{v(t_n)-v^n}\le Ch,$$
		where $C$ is a general constant independent of the stepsize $h$ {and $n$ but depends on the interval length $T$, the bound of numerical {solutions} $C_{\widehat{K}}$ and the Lipschitz constant $L$.}
	\end{mytheo}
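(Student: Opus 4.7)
The plan is to establish first-order convergence of the full scheme $\Phi^{S1}_h=\Phi^{NL}_h\circ\Phi^L_{h}$ by the standard recipe of consistency plus stability, closed with a discrete Gronwall inequality. I would work in the augmented variable $y=(x,v,r)$ and compare the numerical iterate with the exact $h$-flow $\Phi_h$ of the reformulated system \eqref{new CPD}, recalling that the exact component $r(t)=\sqrt{U(x(t))+C_0}$ coincides with the original energy component so that $r$-estimates translate into $x$- and $v$-estimates for the physical system.

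First I would bound the local truncation error $\tau_n := \Phi_h(y(t_n)) - \Phi^{S1}_h(y(t_n))$ by $O(h^2)$. Since $\Phi^L_h$ is the exact flow of the first subflow in \eqref{splitting}, the Lie-splitting defect (the difference between the exact composition and the exact flow of the second subflow composed with $\Phi^L_h$) is $O(h^2)$ by the standard Baker--Campbell--Hausdorff argument, using the non-commutativity of the two vector fields together with the smoothness of $E$, $U$, and the exact solution. The remaining discrepancy between the exact flow of the second subflow and $\Phi^{NL}_h$ is also $O(h^2)$ by a direct Taylor expansion of \eqref{x^n}--\eqref{r^n}: each equation amounts to a midpoint-like quadrature applied to a smooth right-hand side whose values at $\widehat{x}^{n+\frac{1}{2}}=x^n+\frac{h}{2}v^n$ agree with the true midpoint up to $O(h^2)$. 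The uniform lower bound $\sqrt{U(x)+C_0}\ge \sqrt{C_0-c_0}>0$ keeps the constants harmless.

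Next I would prove a one-step Lipschitz stability estimate: for any two inputs $y,\tilde y$ with component norms bounded by $C_{\widehat{K}}$ from Lemma \ref{global bounded},
$$
\norm{x_1-\tilde x_1}+\norm{v_1-\tilde v_1}+\abs{r_1-\tilde r_1}
\le (1+Ch)\bigl(\norm{x-\tilde x}+\norm{v-\tilde v}+\abs{r-\tilde r}\bigr),
$$
where $(x_1,v_1,r_1)=\Phi^{S1}_h(y)$ and likewise for $\tilde y$. The propagator $\Phi^L_h$ is an isometry in $v$ because $e^{h\widehat{B}}$ is orthogonal, and acts trivially on $x$ and $r$. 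For $\Phi^{NL}_h$ I would work from the explicit form \eqref{ex-S1-SAV} and estimate differences of the coefficient matrices $\tilde A_n,\tilde B_n$, the scalars $\tilde b_n,\tilde c_n$, and the rational expression $E(\cdot)/\sqrt{U(\cdot)+C_0}$ by exploiting the Lipschitz continuity of $E$, the smoothness of $U$, and the lower bound above; the resulting constant $C$ depends only on $T$, $C_{\widehat{K}}$, $L$, $c_0$, and $C_0$.

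Combining consistency and stability in the usual way, the global error $e_n := \norm{x(t_n)-x^n}+\norm{v(t_n)-v^n}+\abs{r(t_n)-r^n}$ satisfies $e_{n+1} \le (1+Ch)e_n + Ch^2$, and discrete Gronwall yields $e_n \le Ch$ uniformly for $0\le n\le T/h$, which in particular delivers the stated bounds on $\norm{x(t_n)-x^n}$ and $\norm{v(t_n)-v^n}$. The main obstacle I anticipate is the stability step, specifically bounding the differences of the data-dependent matrices $\tilde A_n,\tilde B_n$ and of the denominator $\sqrt{U(\widehat{x}^{n+\frac{1}{2}})+C_0}$ uniformly in $h$; this is precisely where the global boundedness result of Lemma \ref{global bounded} and the strict positivity $C_0-c_0>0$ are indispensable, together with the Lipschitz hypothesis on $E$. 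Once these ingredients are in place, the Gronwall closure is routine.
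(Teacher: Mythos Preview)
Your proposal is correct and follows essentially the same approach as the paper: both establish an $O(h^2)$ local truncation error for $\Phi^{S1}_h$ via Taylor expansion of the split flows, then obtain an error recursion of the form $e_{n+1}\le(1+Ch)e_n+Ch^2$ using the orthogonality of $e^{h\widehat B}$, the Lipschitz hypothesis on $E$, the lower bound $\sqrt{U+C_0}\ge\sqrt{C_0-c_0}$, and the global bound $C_{\widehat K}$ from Lemma~\ref{global bounded}, and close with discrete Gronwall. The only cosmetic difference is that you phrase the stability step as an abstract one-step Lipschitz estimate for the map $\Phi^{S1}_h$ (working from the explicit form \eqref{ex-S1-SAV}), whereas the paper writes out the componentwise error equations for $(e_x^n,e_v^n,e_r^n)$ directly from the implicit form \eqref{im-S1-SAV} and bounds each piece by hand; the underlying estimates are the same.
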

	\begin{proof}
		(I) First, we formulate the truncation errors by inserting the exat solution into \eqref{im-S1-SAV} as follows:
		\begin{equation}\label{truncation}
			\begin{aligned}
				&x(t_{n+1})=x(t_n)+he^{h\widehat{B}}v(t_n)+\frac{h^2}{2}\frac{E(x(t_n)+\frac{h}{2}e^{h\widehat{B}}v(t_n))}{\sqrt{U(x(t_n)+\frac{h}{2}e^{h\widehat{B}}v(t_n))+C_0}}\frac{r(t_{n+1})+r(t_n)}{2}+R_x^{n+1},\\
				&v(t_{n+1})=e^{h\widehat{B}}v(t_n)+h\frac{E(x(t_n)+\frac{h}{2}e^{h\widehat{B}}v(t_n))}{\sqrt{U(x(t_n)+\frac{h}{2}e^{h\widehat{B}}v(t_n))+C_0}}\frac{r(t_{n+1})+r(t_n)}{2}+R_v^{n+1},\\
				&r(t_{n+1})=r(t_n)-\left( x(t_{n+1})-x(t_n)\right) ^\intercal\frac{E(x(t_n)+\frac{h}{2}e^{h\widehat{B}}v(t_n))}
				{2\sqrt{U(x(t_n)+\frac{h}{2}e^{h\widehat{B}}v(t_n))+C_0}}+R_r^{n+1},
			\end{aligned}
		\end{equation}
		{where $R_x^{n+1},R_v^{n+1},R_r^{n+1}$ are the deviations.} Then we get
		\begin{equation*}
			R_w^{n+1}=\Phi_h(w(t_n))-\Phi^{NL}_h\circ \Phi^L_h(w(t_n)),\\
		\end{equation*}
		{in which} $\Phi_h$ denotes the exact flow of \eqref{new CPD}, $w(t)=\begin{pmatrix}
			x(t)\\
			v(t)\\
			r(t)
		\end{pmatrix}$ and
		$R_w^{n+1}=\begin{pmatrix}
			R_x^{n+1}\\
			R_v^{n+1}\\
			R_r^{n+1}
		\end{pmatrix}$.
		
		By Taylor expansion, we reach
		$$\Phi_h(w)=w+hf(w)+\mathcal{O}(h^2),\;\Phi_h^L(w)=w+hf_1(w)+\mathcal{O}(h^2),\;\Phi_h^{NL}(w)=w+hf_2(w)+\mathcal{O}(h^2)$$
		with $f(w)=\begin{pmatrix}
			\textbf{0}_3\\\widehat{B}(x)v\\0
		\end{pmatrix}+\begin{pmatrix}
			v\\
			\frac{E(x)}{\sqrt{U(x)+C_0}}r\\
			-\frac{\dot{x}^\intercal E(x)}{2\sqrt{U(x)+C_0}}
		\end{pmatrix}:=f_1(w)+f_2(w)$.
		Hence, one gets
		\begin{align*}
			\widehat{\Phi}^{NL}_h\circ \Phi^L_h(w)&=w+hf_1(w)+\mathcal{O}(h^2)+hf_2(w+hf_1(w)+\mathcal{O}(h^2))+\mathcal{O}(h^2)\\
			&=w+hf_1(w)+hf_2(w)+\mathcal{O}(h^2)=\Phi_h(w)+\mathcal{O}(h^2),
		\end{align*}
		which gives that $R_w^{n+1}=\mathcal{O}(h^2)$.
		
		(II) Secondly, we define the following error functions:
		$$e_x^n=x(t_n)-x^n,\; e_v^n=v(t_n)-v^n,\; e_r^n=r(t_n)-r^n.$$
		For the sake of brevity, {we denote $\widehat{E}(t):=E(x(t)+\frac{h}{2}e^{h\widehat{B}}v(t))$, $\widehat{U}(t):=U(x(t)+\frac{h}{2}e^{h\widehat{B}}v(t))$ and rewrite} $\frac{\widehat{E}(t_n)}{\sqrt{\widehat{U}(t_n)+C_0}}-\frac{E(\tilde{x}^{n+\frac{1}{2}})}{\sqrt{U(\tilde{x}^{n+\frac{1}{2}})+C_0}}=M_1+M_2$
		{with notations} $M_1=\frac{\widehat{E}(t_n)-E(\tilde{x}^{n+\frac{1}{2}})}{\sqrt{\widehat{U}(t_n)+C_0}}$ and
		$$M_2=E(\tilde{x}^{n+\frac{1}{2}})\frac{U(\tilde{x}^{n+\frac{1}{2}})-\widehat{U}(t_n)}{\sqrt{(\widehat{U}(t_n)+C_0)(U(\tilde{x}^{n+\frac{1}{2}})+C_0)}\left(\sqrt{\widehat{U}(t_n)+C_0}+\sqrt{U(\tilde{x}^{n+\frac{1}{2}})+C_0}\right)}.$$
		Subtracting \eqref{im-S1-SAV} from \eqref{truncation} one by one, we derive the error equations:
		\begin{equation}
			\begin{aligned}
				e_x^{n+1}=&e_x^n+he^{h\widehat{B}}e_v^n+\frac{h^2}{2}\left(\frac{\widehat{E}(t_n)}{\sqrt{\widehat{U}(t_n)+C_0}}-\frac{E(\tilde{x}^{n+\frac{1}{2}})}{\sqrt{U(\tilde{x}^{n+\frac{1}{2}})+C_0}}\right)r^{n+\frac{1}{2}}+\frac{h^2\widehat{E}(t_n)(e_r^{n+1}+e_r^n)}{4\sqrt{\widehat{U}(t_n)+C_0}}+R_x^{n+1},\\
				e_v^{n+1}=&e^{h\widehat{B}}e_v^n+h\left(\frac{\widehat{E}(t_n)}{\sqrt{\widehat{U}(t_n)+C_0}}-\frac{E(\tilde{x}^{n+\frac{1}{2}})}{\sqrt{U(\tilde{x}^{n+\frac{1}{2}})+C_0}}\right)r^{n+\frac{1}{2}}+ \frac{h}{2} \frac{\widehat{E}(t_n)(e_r^{n+1}+e_r^n)}{\sqrt{\widehat{U}(t_n)+C_0}}+R_v^{n+1},\\
				e_r^{n+1}=&e_r^n-\frac{(x^{n+1}-x^n)^\intercal}{2}\left(\frac{\widehat{E}(t_n)}{\sqrt{\widehat{U}(t_n)+C_0}}-\frac{E(\tilde{x}^{n+\frac{1}{2}})}{\sqrt{U(\tilde{x}^{n+\frac{1}{2}})+C_0}}\right)-\frac{(e_x^{n+1}-e_x^n)^\intercal\widehat{E}(t_n)}{2\sqrt{\widehat{U}(t_n)+C_0}}+R_r^{n+1}.
			\end{aligned}
		\end{equation}
		By substituting the above first equation into the third one and using  $x^{n+1}-x^n=\tilde{C}_1h\textbf{1}$ with $\textbf{1}=(1,1,1)^\intercal$, we obtain $e_r^{n+1}=e_r^n+hK_1+hK_2+(R_x^{n+1})^\intercal\frac{\widehat{E}(t_n)}{2\sqrt{\widehat{U}(t_n)+C_0}}+R_r^{n+1} $ {with $K_1=-\frac{\tilde{C}_1}{2}\textbf{1}^\intercal\left(M_1+M_2\right)$ and $K_2=-\left[e^{h\widehat{B}}e_v^n+\frac{h}{2}\left(M_1+M_2\right)r^{n+\frac{1}{2}}+\frac{h}{4}\frac{\widehat{E}(t_n)(e_r^{n+1}+e_r^n)}{\sqrt{\widehat{U}(t_n)+C_0}}\right]^\intercal\frac{\widehat{E}(t_n)}{2\sqrt{\widehat{U}(t_n)+C_0}}.$}
		{Letting} $e_w^n=\begin{pmatrix}
			e_x^n\\e_v^n\\e_r^n
		\end{pmatrix}$
		and $e_w^0=\begin{pmatrix}
			\textbf{0}_3\\\textbf{0}_3\\0
		\end{pmatrix}$, {it is obvious that
			\begin{equation}\label{component}
				\norm{e_x^n}\le\norm{e_w^n},\;\norm{e_v^n}\le\norm{e_w^n},\;\abs{e_r^n}\le\norm{e_w^n}
		\end{equation}} and
		\begin{align*}
			e_w^{n+1}=&\begin{pmatrix}
				I_3&he^{h\widehat{B}}&0\\
				O_3&e^{h\widehat{B}}&0\\
				0&0&1
			\end{pmatrix}e_w^n+
			\begin{pmatrix}
				\frac{h^2}{2}I_3&O_3&0\\
				O_3&hI_3&0\\
				0&0&h
			\end{pmatrix}
			\begin{pmatrix}
				\left(M_1+M_2\right)r^{n+\frac{1}{2}}\\
				\left(M_1+M_2\right)r^{n+\frac{1}{2}}\\
				K_1
			\end{pmatrix}\\
			&+\begin{pmatrix}
				\frac{h^2}{2}I_3&O_3&0\\
				O_3&hI_3&0\\
				0&0&h
			\end{pmatrix}
			\begin{pmatrix}
				\frac{\widehat{E}(t_n)(e_r^{n+1}+e_r^n)}{2\sqrt{\widehat{U}(t_n)+C_0}}\\
				\frac{\widehat{E}(t_n)(e_r^{n+1}+e_r^n)}{2\sqrt{\widehat{U}(t_n)+C_0}}\\
				K_2
			\end{pmatrix} +\begin{pmatrix}
				R_x^{n+1}\\
				R_v^{n+1}\\
				(R_x^{n+1})^\intercal \frac{\widehat{E}(t_n)}{2\sqrt{\widehat{U}(t_n)+C_0}}+R_r^{n+1}
			\end{pmatrix}.
		\end{align*}
		According to the assumptions of {Lemma \ref{local bounded}, Lemma \ref{global bounded} and }this Theorem, it is known that the exact solution and the numerical solution of \eqref{CPD} are both bounded. Hence we get
		\begin{align*}
			&\norm{\frac{\widehat{E}(t_n)}{\sqrt{\widehat{U}(t_n)+C_0}}}\le \frac{L}{\sqrt{C_0-c_0}}\left( \norm{x(t_n)}+\frac{h}{2}\norm{e^{h\widehat{B}}}\norm{v(t_n)}\right)\le \tilde{C}_2,\\
			&\norm{\frac{E(\tilde{x}^{n+\frac{1}{2}})}{\sqrt{U(\tilde{x}^{n+\frac{1}{2}})+C_0}}}\le \frac{L}{\sqrt{C_0-c_0}}\left( \norm{x^n}+\frac{h}{2}\norm{e^{h\widehat{B}}}\norm{v^n}\right)\le \tilde{C}_2.
		\end{align*}
		Consequently, $\norm{\frac{\widehat{E}(t_n)(e_r^{n+1}+e_r^n)}{2\sqrt{\widehat{U}(t_n)+C_0}}}\le \frac{\tilde{C}_2}{2}\left( \norm{e_w^{n+1}}+\norm{e_w^n} \right). $
		Then by using Minkowski’s inequality and Lipschitz condition, these two terms are bounded by $$\norm{M_1}\le \tilde{C}_3L\norm{e_x^n+\frac{h}{2}e^{h\widehat{B}}e_v^n}\le \tilde{C}_3L\left(\norm{e_x^n}+\frac{h}{2}\norm{e^{h\widehat{B}}}\norm{e_v^n}\right)\le \tilde{C}_3L\left( 1+h\right) \norm{e_w^n},$$
		$$\norm{M_2}\le \tilde{C}_4\norm{e_x^n+\frac{h}{2}e^{h\widehat{B}}e_v^n}\le \tilde{C}_4\left(\norm{e_x^n}+\frac{h}{2}\norm{e^{h\widehat{B}}}\norm{e_v^n}\right)\le \tilde{C}_4\left( 1+h\right) \norm{e_w^n}.$$
		In this way, we combine the two previous inequalities to get
		$$\norm{M_1+M_2}\le\norm{M_1}+\norm{M_2}\le (\tilde{C}_3L+\tilde{C}_4)(1+h)\norm{e_w^n},$$
		$$\abs{K_1}\le\frac{\tilde{C}_1}{2}\norm{\textbf{1}}\norm{M_1+M_2}\le \frac{\sqrt{3}\tilde{C}_1}{2}(\tilde{C}_3L+\tilde{C}_4)(1+h)\norm{e_w^n}.$$
		Denoting $\max\{\sqrt{3}\tilde{C}_1,\tilde{C}_2,\tilde{C}_3,\tilde{C}_4,C_{\widehat{K}}\}$ by $\tilde{C}$, thus we can estimate
		\begin{align*}
			\abs{K_2}&\le \norm{e^{h\widehat{B}}e_v^n+\frac{h}{2}\left(M_1+M_2\right)r^{n+\frac{1}{2}}+\frac{h}{4} \frac{\widehat{E}(t_n)(e_r^{n+1}+e_r^n)}{\sqrt{\widehat{U}(t_n)+C_0}} }\norm{\frac{\widehat{E}(t_n)}{2\sqrt{\widehat{U}(t_n)+C_0}}}\\
			&\le\frac{\tilde{C}}{2}\left[\norm{e^{h\widehat{B}}}\norm{e_v^n}+\frac{h}{2} (\tilde{C}+\tilde{C}L)(1+h)\norm{e_w^n}+\frac{h}{4}\tilde{C}\left(\abs{e_r^{n+1}}+\abs{e_r^n} \right) \right]  \\
			&\le\frac{\tilde{C}}{2}\left[\norm{e_w^n}+\frac{h}{2} (\tilde{C}+\tilde{C}L)(1+h)\norm{e_w^n}+\frac{h}{4}\tilde{C}\left(\norm{e_w^{n+1}}+\norm{e_w^n} \right) \right].
		\end{align*}
		On the basis of above results, it is concluded that
		\begin{equation*}
			\begin{aligned}
				\norm{e_w^{n+1}}
				\le&\norm{\begin{pmatrix}
						I_3&he^{h\widehat{B}}&0\\
						O_3&e^{h\widehat{B}}&0\\
						0&0&1
				\end{pmatrix}}\norm{e_w^n}
				+\norm{\begin{pmatrix}
						\frac{h^2}{2}I_3&O_3&0\\
						O_3&hI_3&0\\
						0&0&h
				\end{pmatrix}}
				\norm{\begin{pmatrix}
						\left(M_1+M_2\right)r^{n+\frac{1}{2}}\\
						\left(M_1+M_2\right)r^{n+\frac{1}{2}}\\
						K_1
				\end{pmatrix}}\\
				&+\norm{\begin{pmatrix}
						\frac{h^2}{2}I_3&O_3&0\\
						O_3&hI_3&0\\
						0&0&h
				\end{pmatrix}}\norm{\begin{pmatrix}
						\frac{\widehat{E}(t_n)(e_r^{n+1}+e_r^n)}{2\sqrt{\widehat{U}(t_n)+C_0}}\\
						\frac{\widehat{E}(t_n)(e_r^{n+1}+e_r^n)}{2\sqrt{\widehat{U}(t_n)+C_0}}\\
						K_2
				\end{pmatrix}}
				+\norm{\begin{pmatrix}
						R_x^{n+1}\\
						R_v^{n+1}\\
						(R_x^{n+1})^\intercal \frac{\widehat{E}(t_n)}{2\sqrt{\widehat{U}(t_n)+C_0}}+R_r^{n+1}
				\end{pmatrix}}\\
				\le&\norm{e_w^n}+\widehat{C}h(1+h)\left( \norm{e_w^{n+1}}+\norm{e_w^n}\right) +\widehat{C}h^2,
			\end{aligned}
		\end{equation*}
		in which  $\widehat{C}$ does not depend on $h$ but depends on $C_{\widehat{K}}$ and $L$. Using Gronwall's inequality and noting $\norm{e_w^0}=0$, we get
		$$\norm{e_w^n}\le Ch.$$
		As a result, we obtain the following estimations
		$$\norm{e_x^n}\le Ch,\;\norm{e_v^n}\le Ch,\;\abs{e_r^n}\le Ch,$$
		which complete the proof.
	\end{proof}

	\section{Numerical experiments}\label{sec5}
	In the previous sections, we propose a novel class of linearly implicit energy-preserving schemes (abbreviated by E2-SAV and SSAVs) for the conservative system \eqref{oso} and \eqref{CPD}. In this section, to verify our theoretical analysis results, we present the numerical performance in energy preservation, accuracy {of all schemes} and CPU time of SSAVs. At first, we introduce the global error:
	\begin{equation}\label{global error}
		error:=\frac{\norm{\xi^n-\xi(t_n)}}{\norm{\xi(t_n)}}+\frac{\norm{\eta^n-\eta(t_n)}}{\norm{\eta(t_n)}}
	\end{equation}
	and  the relative error of the energy $H(\xi,\eta,\zeta)$:
	\begin{equation}\label{ene-err}
		e_H:=\frac{\norm{H(\xi^n,\eta^n,\zeta^n)-H(\xi^0,\eta^0,\zeta^0)}}{\norm{H(\xi^0,\eta^0,\zeta^0)}},
	\end{equation}
	with $\eta:=\dot{\xi}$ and the scalar auxiliary variable $\zeta$. The reference solution is obtained by the ‘ode45’ of MATLAB and  the computation of energy is done over a long time interval with the step size $h = 0.01$. Now we carry out the following experiments to illustrate the advantages of E2-SAV and SSAVs.
	
	\subsection{E2-SAV for OSDE}\label{ne-oso}
	{To }show the superiority of our method E2-SAV, we choose the AVF (average vector field) in \cite{AVF} and the  ITO2 (implicit trapezoidal method) in \cite{06Hairer} for comparison. Since the {methods AVF and ITO2 are both} implicit, we use the standard fixed-point iteration with the error tolerance $10^{-10}$ and set the maximum number of iterations as $10^3$. {In other words}, when the error tolerance is reached or the maximum number of iterations is exceeded, the iterative will terminate. For the purpose of computing the integrals shown in AVF and ITO2, the Gauss-Legendre rules are also used.
	
	\begin{problem}\label{prob1}
		\textbf{(H\'{e}non-Heiles model)} We first consider the H\'{e}non-Heiles model, which is a classical Hamiltonian system from astronomy \cite{06Hairer,64Henon}. We adopt the form as in \cite{20Chartier,22Chartier,	23Error}:
		\begin{equation}\label{hhm}
			\frac{d}{dt}\begin{pmatrix}q\\p
			\end{pmatrix}
			=\frac{1}{\eps}J\begin{pmatrix}
				N&O_2\\
				O_2&N
			\end{pmatrix}\begin{pmatrix}
				q\\p
			\end{pmatrix}+J\nabla V(q,p)=J\nabla H_1(q,p)
		\end{equation}
		with $J = \begin{pmatrix}
			O_2&I_2\\
			-I_2&O_2
		\end{pmatrix}$, $N=\begin{pmatrix}
			1&0\\
			0&0
		\end{pmatrix}$,
		$V(q,p)=\frac{p_2^2+q_2^2}{2}+q_1^2q_2-\frac{q_2^3}{3}$ and $H_1(q,p)=\frac{p_1^2+q_1^2}{2\eps}+V(q,p)$. Denoting $u =(q_1,q_2,p_1,p_2)^\intercal$, \eqref{hhm} is exactly in the form of \eqref{IVP} with $R=\frac{1}{\eps}J\begin{pmatrix}
			N&O_2\\
			O_2&N
		\end{pmatrix}$ and $f(u)=J\nabla V(q,p)$. When $\eps$ is small, the variables $q_1, p_1$ are highly oscillatory. We take the initial value as $u_0=(0.12,0.12,0.12,0.12)^\intercal$ and the scalar auxiliary variable as $s(t)=\sqrt{V(q,p)+100}$. Applying \eqref{ESAV} to \eqref{hhm}, we get the numerical scheme E2-SAV for this system:
		\begin{equation}
			\begin{aligned}
				&u^{n+1}=\exp(hR)u^n+h\varphi(hR) Jg(\tilde{u}^{n+\frac{1}{2}},s^{n+\frac{1}{2}}),\ s^{n+1}=s^n+\frac{(u^{n+1}-u^n)^\intercal \nabla V(\tilde{u}^{n+\frac{1}{2}})}{2\sqrt{V(\tilde{u}^{n+\frac{1}{2}})+100}},
			\end{aligned}
		\end{equation}with the approximate term $\tilde{u}^{n+\frac{1}{2}}=\frac{\left( I_4+\exp(hR)\right)}{2} u^n+\frac{h}{2}\varphi(hR)Jg(u^n,s^n)$ and function $g(u,s)=\frac{\nabla V(u)}{\sqrt{V(u)+100}}s$.  Under different $\eps=1,0.1,0.01$, Figure \ref{fig11} shows the errors \eqref{ene-err} of the modified energy $\widehat{H}(q,p,s)=\frac{p_1^2+q_1^2}{2\eps}+s^2-100$ over the interval $[0,10000]$ and Figure \ref{fig12} presents the global errors \eqref{global error} until $T=1$.
	\end{problem}
	
	\begin{problem}\label{prob2}
		\textbf{(Duffing equation)} Secondly, we consider the duffing equation as follows:
		\begin{equation*}
			\frac{d}{dt}\begin{pmatrix}
				q\\p
			\end{pmatrix}=\begin{pmatrix}
				0&1\\-(\omega^2+k^2)&0
			\end{pmatrix}\begin{pmatrix}
				q\\p
			\end{pmatrix}+\begin{pmatrix}
				0\\2k^2q^3
			\end{pmatrix},\quad \begin{pmatrix}
				q(0)\\p(0)
			\end{pmatrix}=\begin{pmatrix}0\\\omega
			\end{pmatrix}
		\end{equation*}
		with its Hamiltonian
		\begin{equation*}
			H(q,p)=\frac{1}{2}p^2+\frac{1}{2}(\omega^2+k^2)q^2-\frac{k^2}{2}q^4.
		\end{equation*}
		The exact solution of this system is $q(t)=sn(\omega t;k/\omega)$ with the Jacobi elliptic function $sn$. {Letting  the scalar auxiliary variable be} $s(t)=\sqrt{-\frac{k^2}{2}q^4+100}$,  we can apply \eqref{q^n}-\eqref{s^n} directly to this system. Under the cases where $\omega=5,10,20$ and $k=0.07$, Figure \ref{fig21} shows the errors \eqref{ene-err} of the modified energy $\widehat{H}(q,p,s)=\frac{1}{2}p^2+\frac{1}{2}(\omega^2+k^2)q^2+s^2-100$ over the interval $[0,10000]$ and Figure \ref{fig22} displays the global errors \eqref{global error} until $T=1$.
	\end{problem}
	
	\begin{problem}\label{prob3}
		\textbf{(sine-Gordon equation)} This test is devoted the sine-Gordon equation with periodic boundary conditions \cite{06Franco}
		\begin{align*}
			\begin{array}
				[c]{ll}%
				\dfrac{\partial^{2}u}{\partial t^{2}}=\dfrac{\partial^{2}u}{\partial
					x^{2}}-\sin u,\ \ \ -1<x<1,\ \ t>0, \ \ \
				u(-1,t)=u(1,t). &
			\end{array}
		\end{align*}
		With second-order symmetric differences on the spatial variable, the above PDE can be transformed into the following   second-order ODEs:
		$$\frac{d^{2}U}{dt^{2}}+QU=F(U),\quad 0<t\leq t_{end},$$
		where $U(t)=(u_{1}(t),\ldots,u_{N}(t))^{T}$, $F(U)   =-\sin(U)=-\big(\sin u_{1},\ldots,\sin u_{N}\big)^{T}$  with $u_{i}(t)\approx u(x_{i},t)$ for $i=1,2,\ldots,N$, and $$Q=\dfrac{1}{\Delta x^{2}}\left(
		\begin{array}
			[c]{ccccc}%
			2 & -1 &  &  & -1\\
			-1 & 2 & -1 &  & \\
			& \ddots & \ddots & \ddots & \\
			&  & -1 & 2 & -1\\
			-1 &  &  & -1 & 2
		\end{array}\right)\qquad \textmd{with}\quad \Delta x=2/N.$$
		Following the paper \cite{06Franco},  the initial
		conditions are chosen as
		\[
		U(0)=(\pi)_{i=1}^{N},\ \ \ U_{t}(0)=\sqrt{N}\Big(0.01+\sin(\dfrac{2 \pi i}{N})\Big)_{i=1}^{N},
		\]
		and the Hamiltonian is $H(U,U_t)=\frac{1}{2}(U_t)^\intercal U_t+\frac{1}{2}U^\intercal MU+V(U)$ with $V(U)=-(\cos u_1+\cos u_2+\dots +\cos u_N)$. Introducing the scalar $s(t)=\sqrt{V(U)+100}$, we can apply the scheme E2-SAV to this problem. Taking $N=16,32,64$, Figure \ref{fig31} shows the errors \eqref{ene-err} of the modified energy $\widehat{H}(U,U_t,s)=\frac{1}{2}(U_t)^\intercal U_t+\frac{1}{2}U^\intercal QU+s^2-100$ on the interval $[0,2000]$ and Figure \ref{fig32} displays the global errors \eqref{global error} until $T=1$.
	\end{problem}

	From the results in the three tests, it can be observed that the AVF and ITO2 {do not have modified energy-preserving property but E2-SAV exactly conserves the modified energy. Meanwhile,  the E2-SAV shows a second order  accuracy, and} when the system is highly oscillatory, its accuracy is almost unaltered in comparison with AVF and ITO2.
	
	\begin{figure}[t!]
		\centering
		\begin{tabular}[c]{ccc}%
			\subfigure{\includegraphics[width=4.7cm,height=4.2cm]{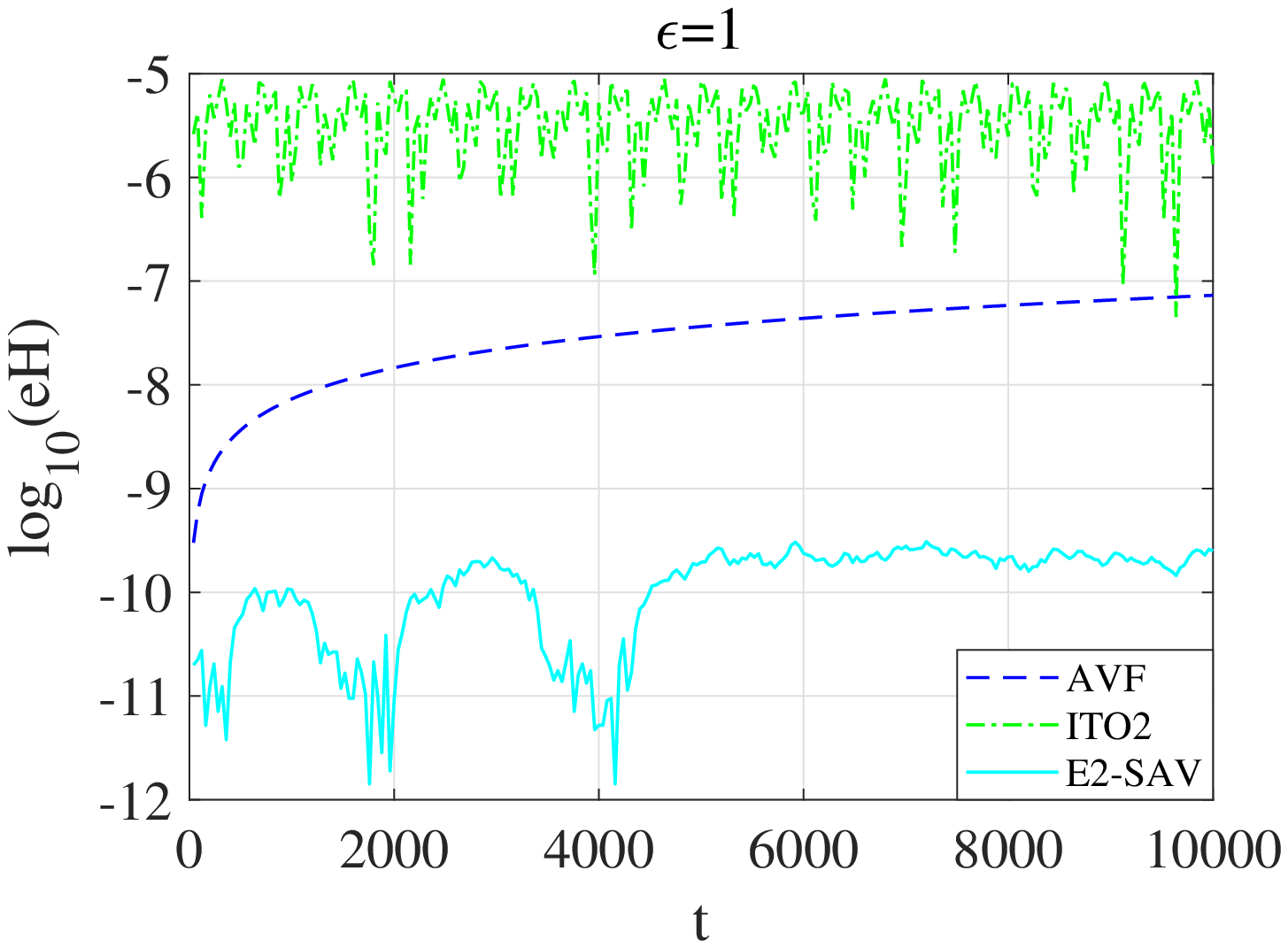}}			\subfigure{\includegraphics[width=4.7cm,height=4.2cm]{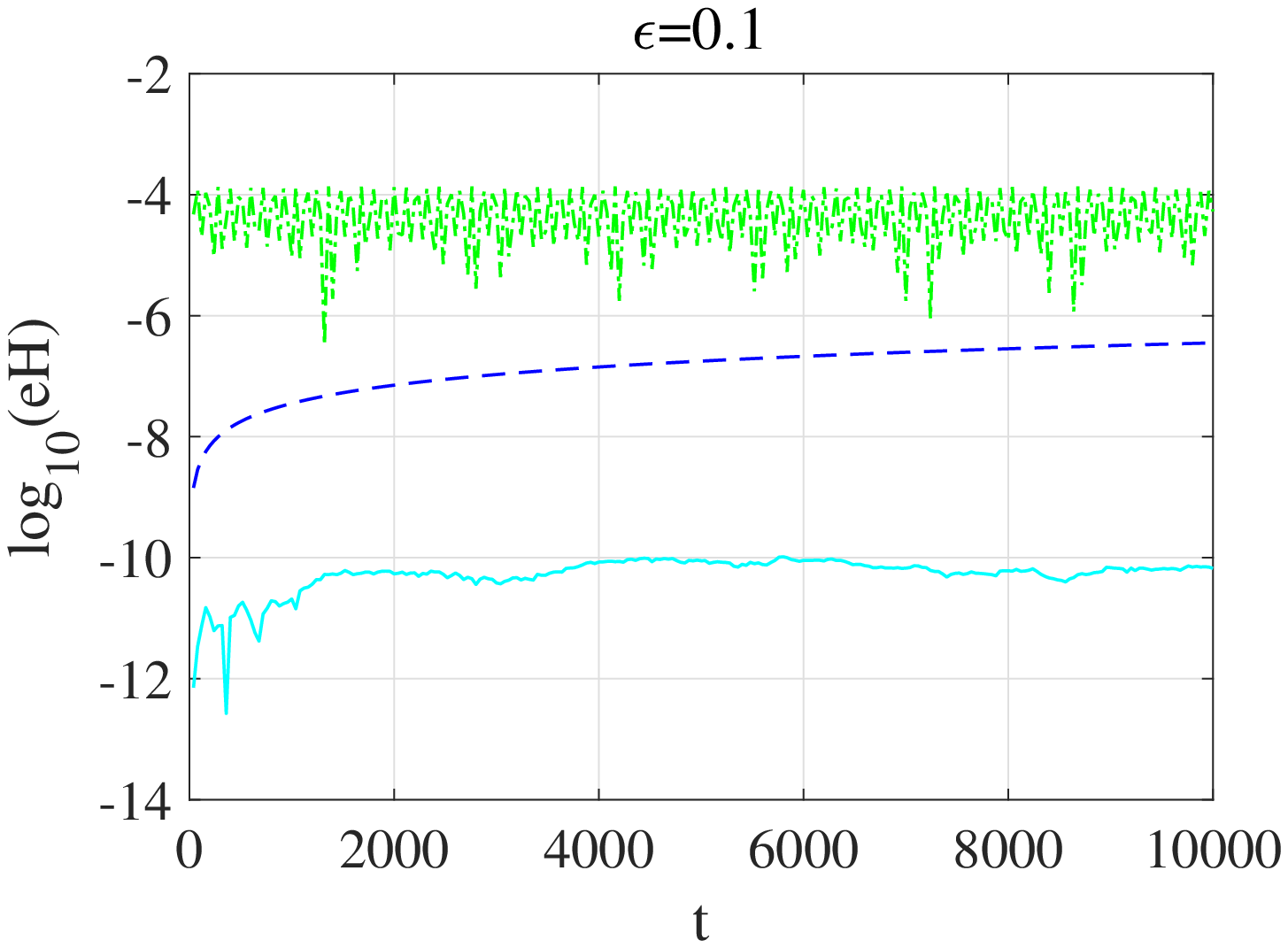}}
			\subfigure{\includegraphics[width=4.7cm,height=4.2cm]{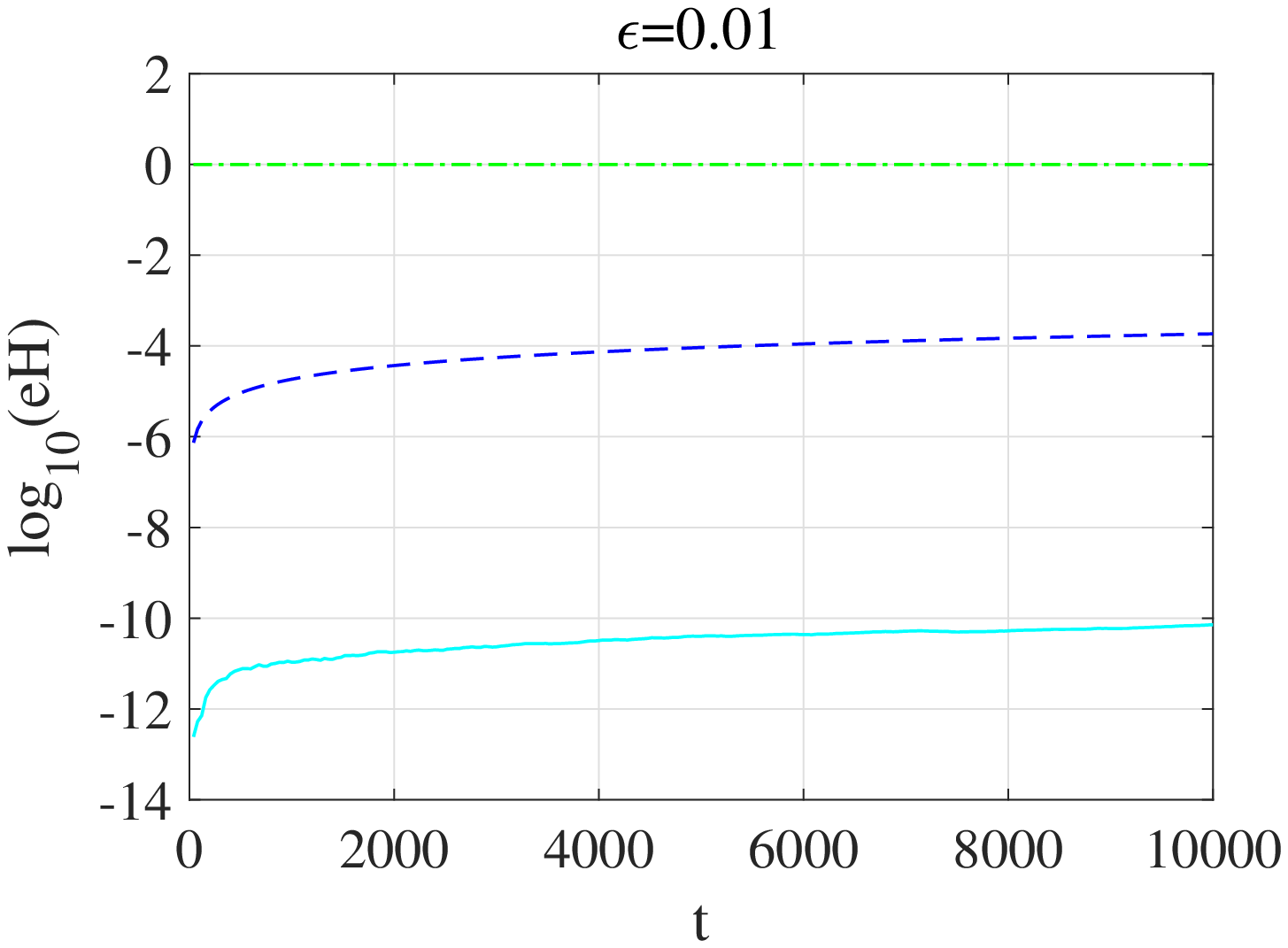}}		
		\end{tabular}
		\caption{{Problem \ref{prob1}.} Evolution of the error (\ref{ene-err}) with modified energy $\widehat{H}(q,p,s)=\frac{p_1^2+q_1^2}{2\eps}+s^2-100$ as function of time $t_n = nh$ under different $\eps$.}\label{fig11}
	\end{figure}
	
	\begin{figure}[t!]
		\centering
		\begin{tabular}[c]{ccc}%
			\subfigure{\includegraphics[width=4.7cm,height=4.2cm]{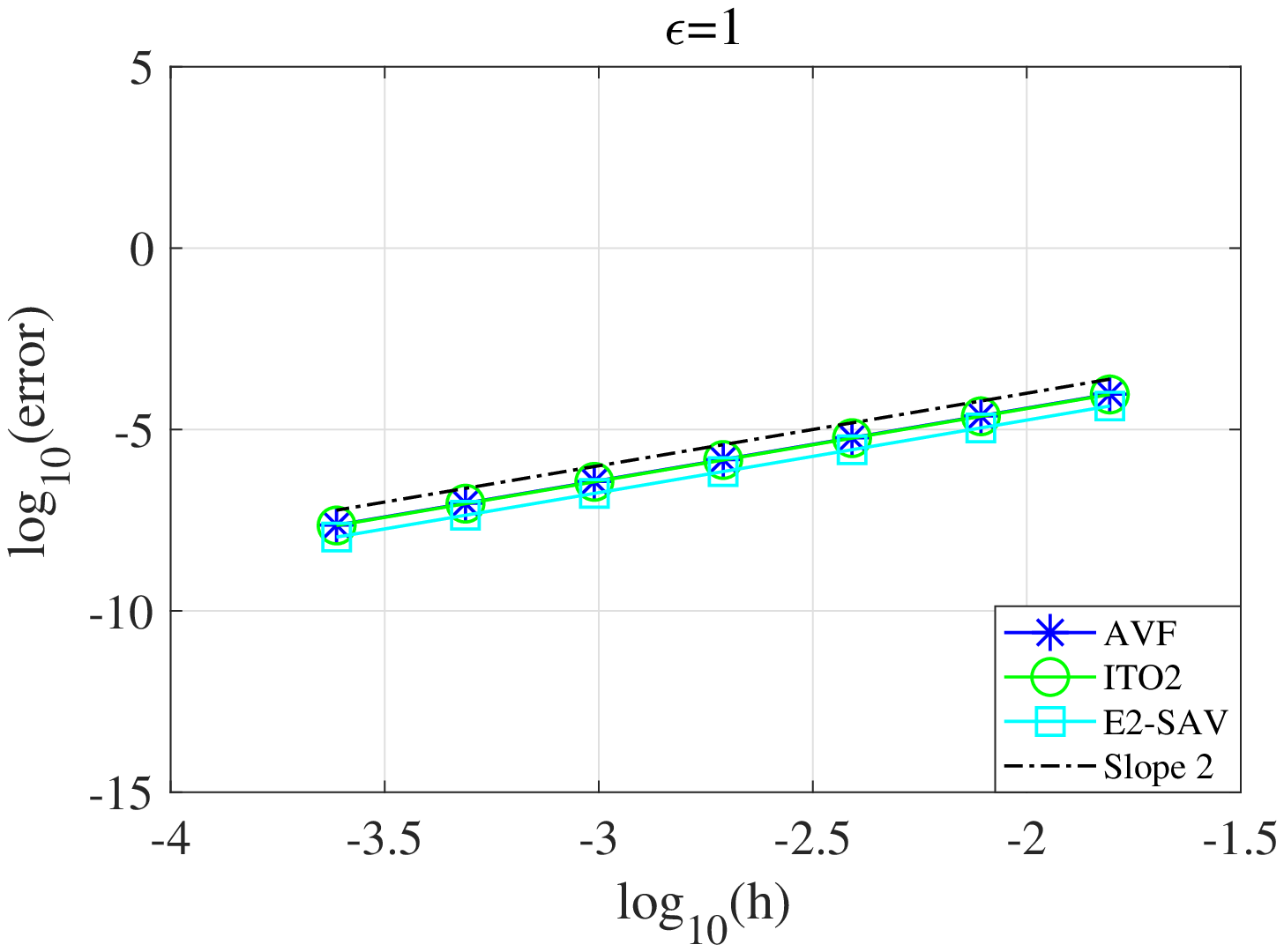}}			\subfigure{\includegraphics[width=4.7cm,height=4.2cm]{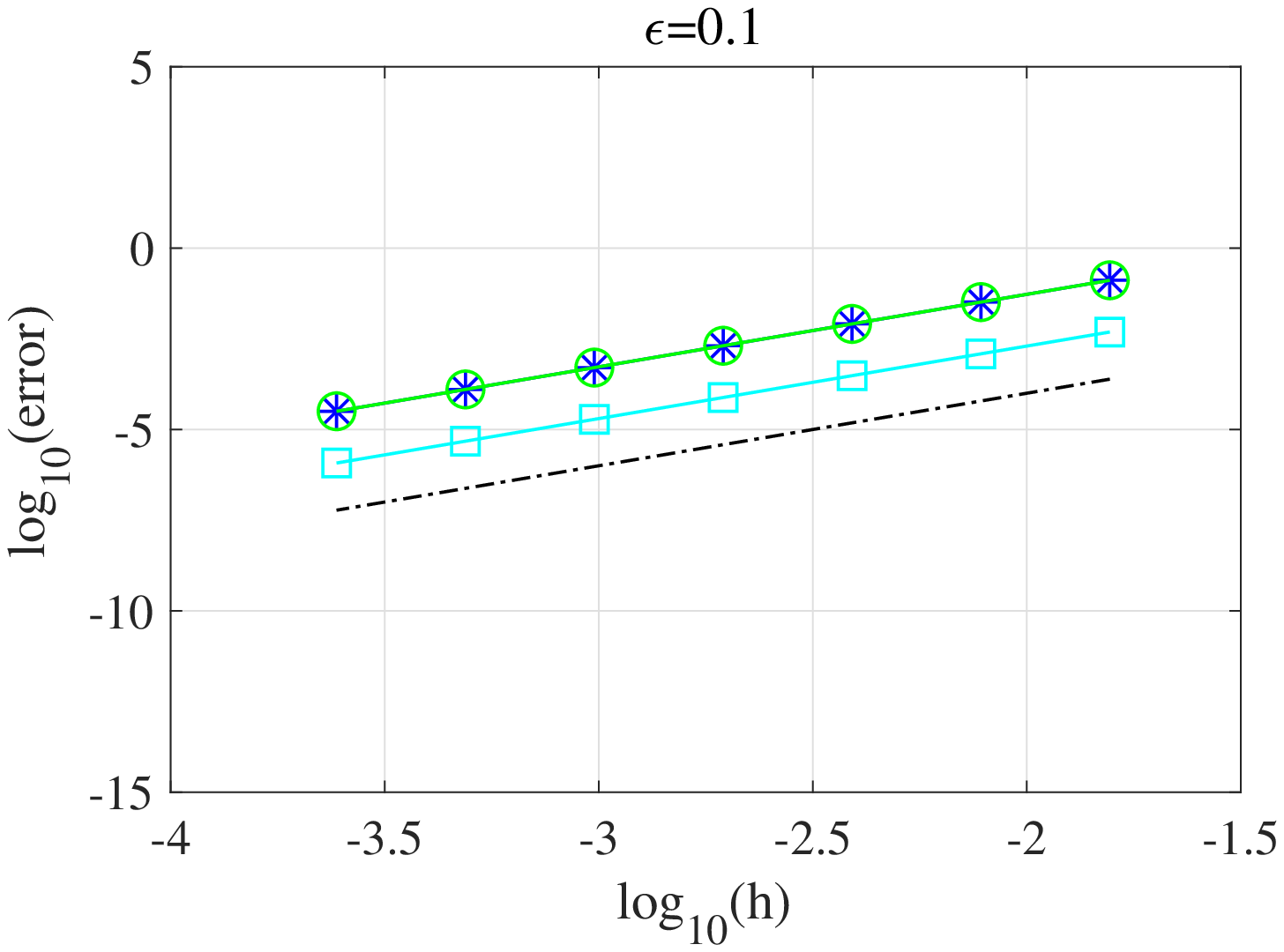}}
			\subfigure{\includegraphics[width=4.7cm,height=4.2cm]{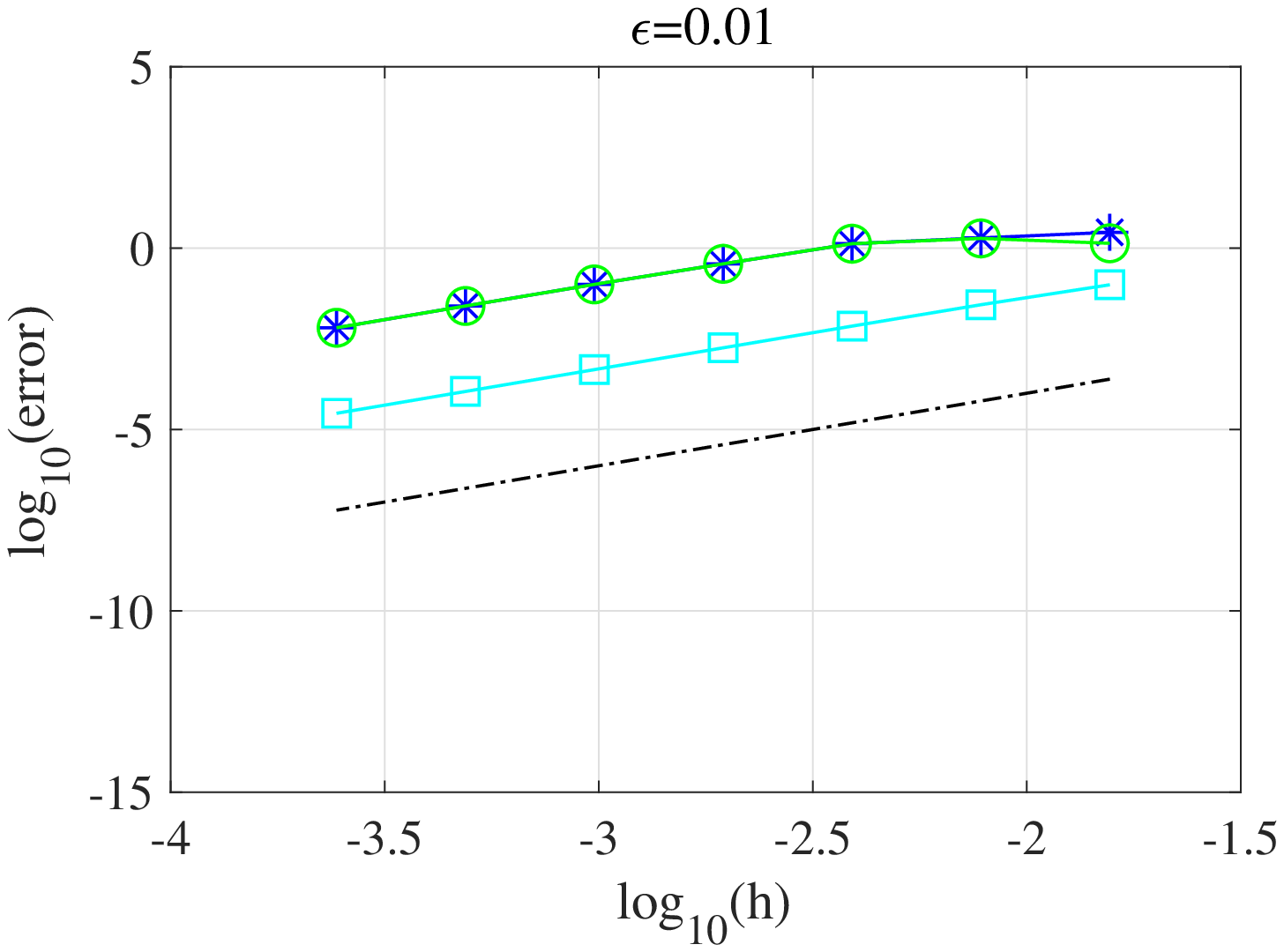}}		
		\end{tabular}
		\caption{Problem \ref{prob1}. The global errors \eqref{global error} with $T = 1$ and $h = 1/2^k$ for $k = 6,7,\dots,12$ under different $\eps$.}\label{fig12}
	\end{figure}
	
	\begin{figure}[t!]
		\centering
		\begin{tabular}[c]{ccc}%
			\subfigure{\includegraphics[width=4.7cm,height=4.2cm]{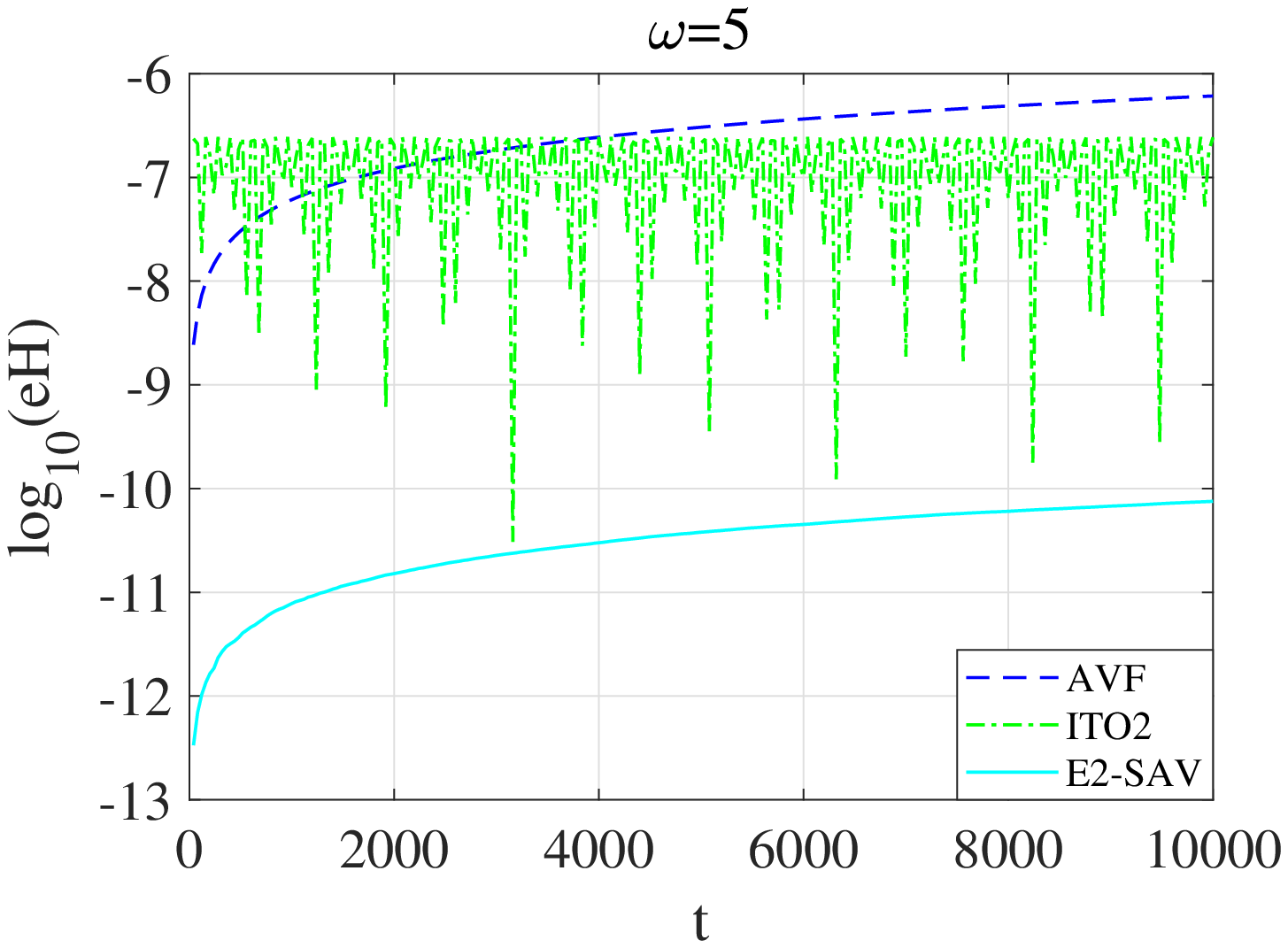}}			\subfigure{\includegraphics[width=4.7cm,height=4.2cm]{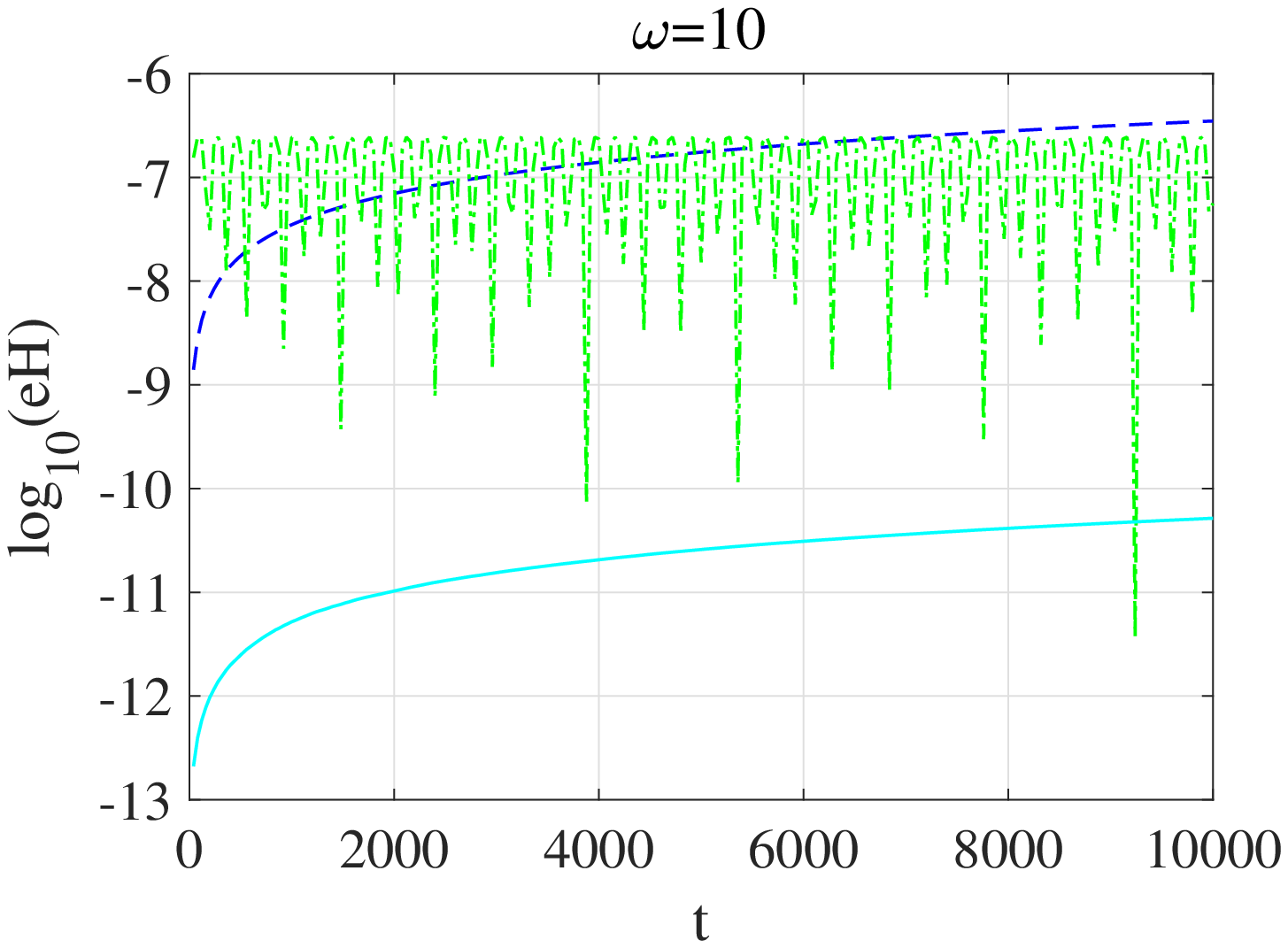}}
			\subfigure{\includegraphics[width=4.7cm,height=4.2cm]{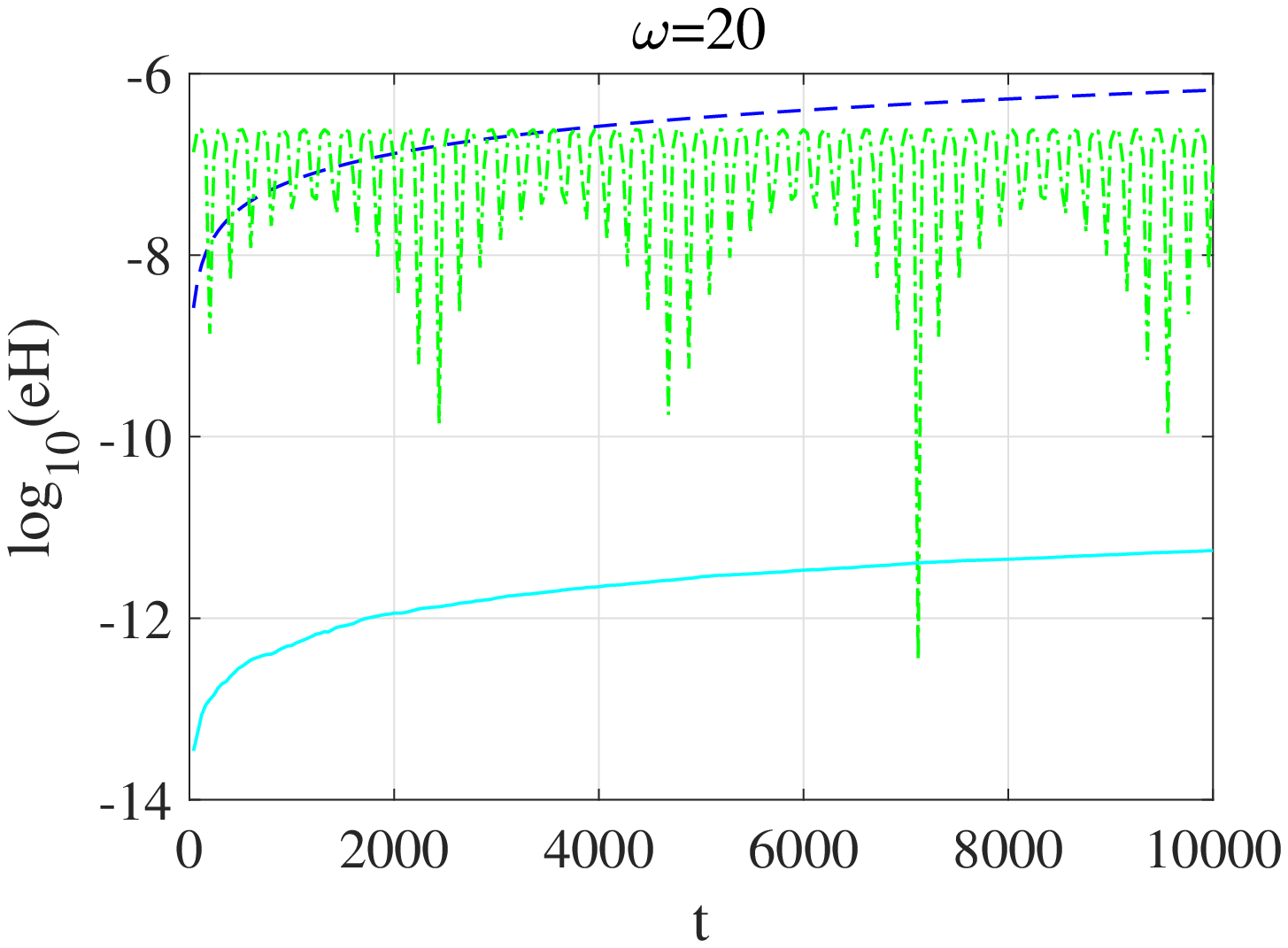}}		
		\end{tabular}
		\caption{{Problem \ref{prob2}.} Evolution of the error (\ref{ene-err}) with modified energy $\widehat{H}(q,p,s)=\frac{1}{2}p^2+\frac{1}{2}(\omega^2+k^2)q^2+s^2-100$ as function of time $t_n = nh$ under different $\omega$.}\label{fig21}
	\end{figure}
	
	\begin{figure}[t!]
		\centering
		\begin{tabular}[c]{ccc}%
			\subfigure{\includegraphics[width=4.7cm,height=4.2cm]{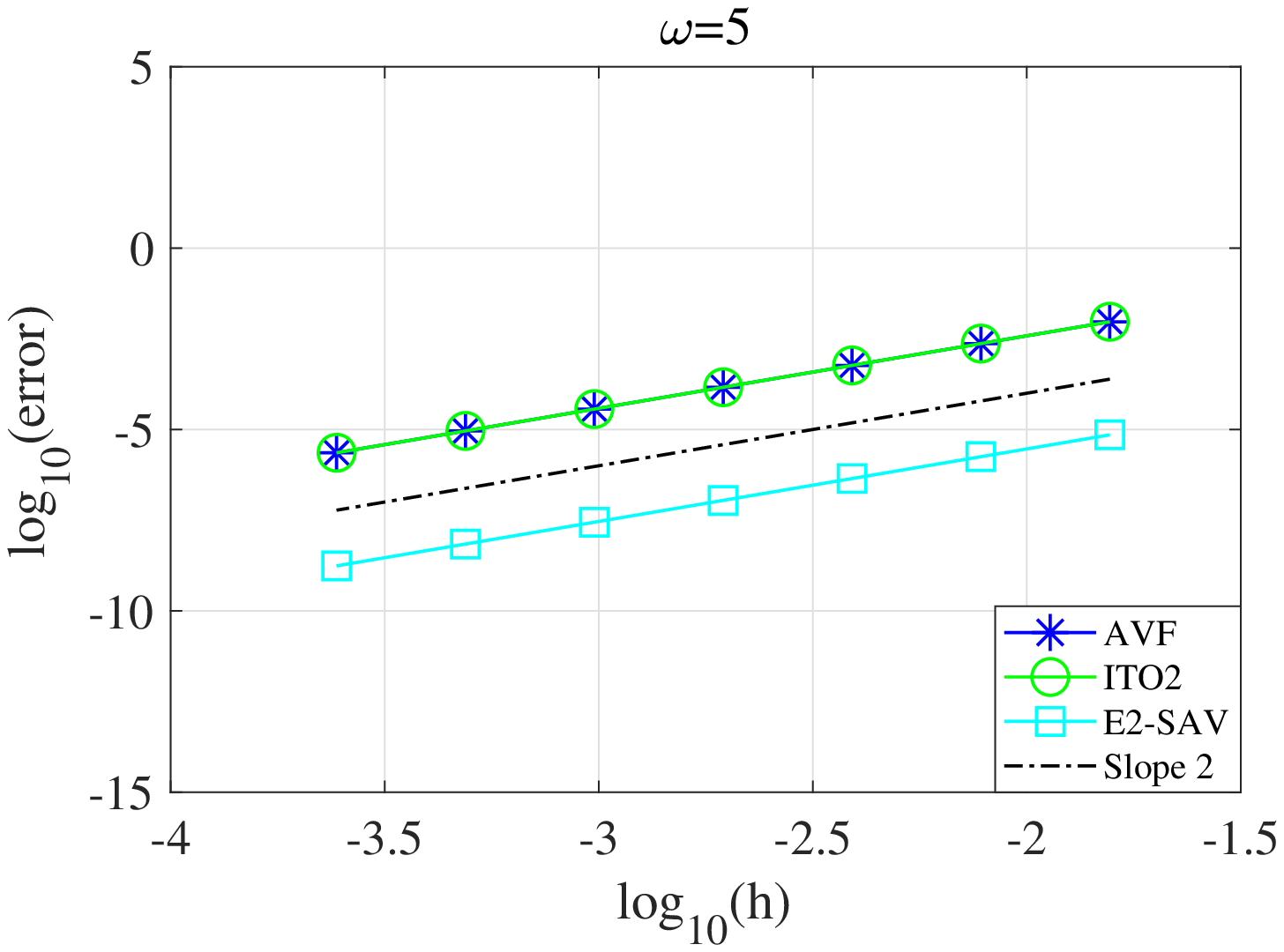}}			\subfigure{\includegraphics[width=4.7cm,height=4.2cm]{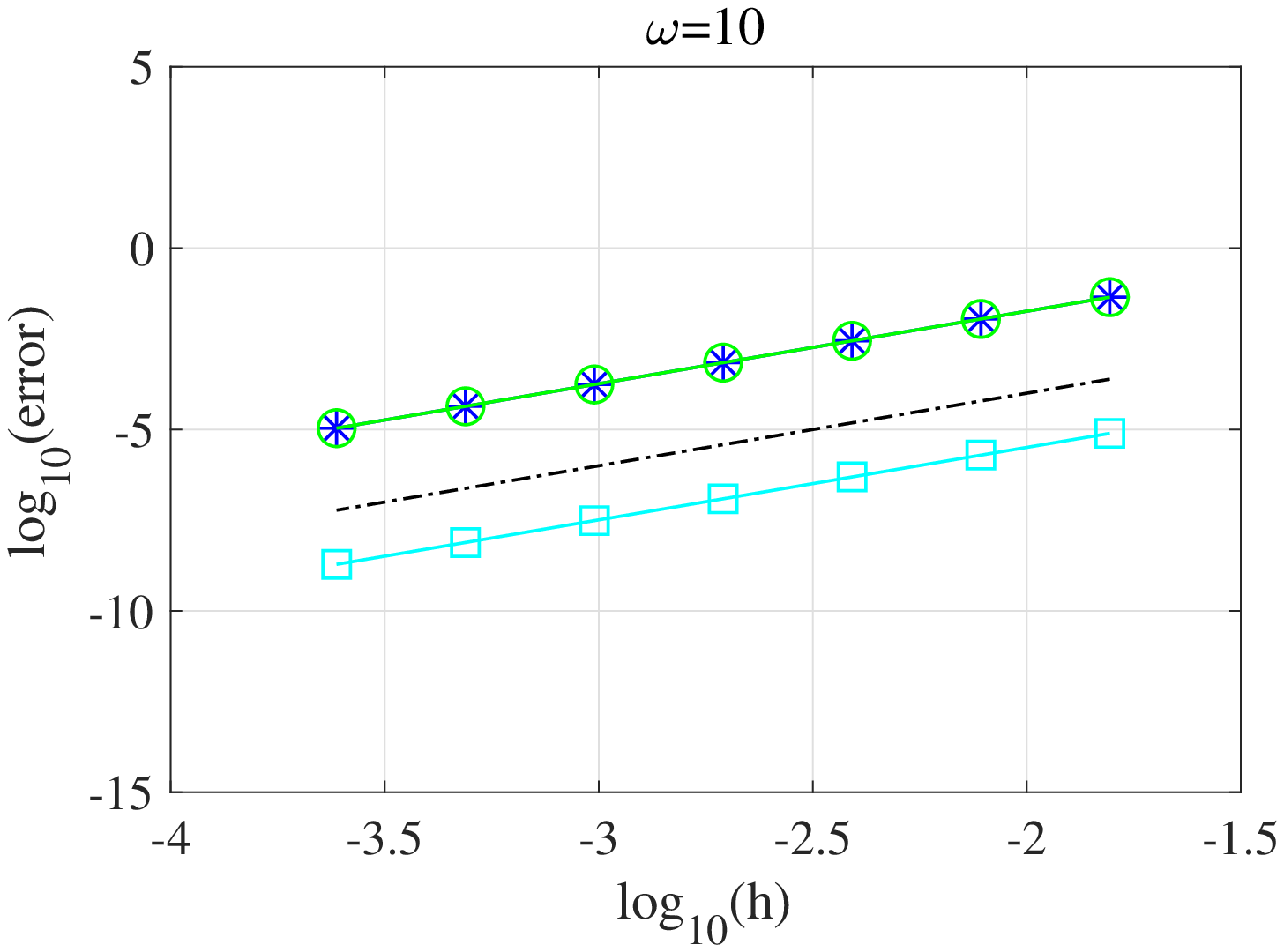}}
			\subfigure{\includegraphics[width=4.7cm,height=4.2cm]{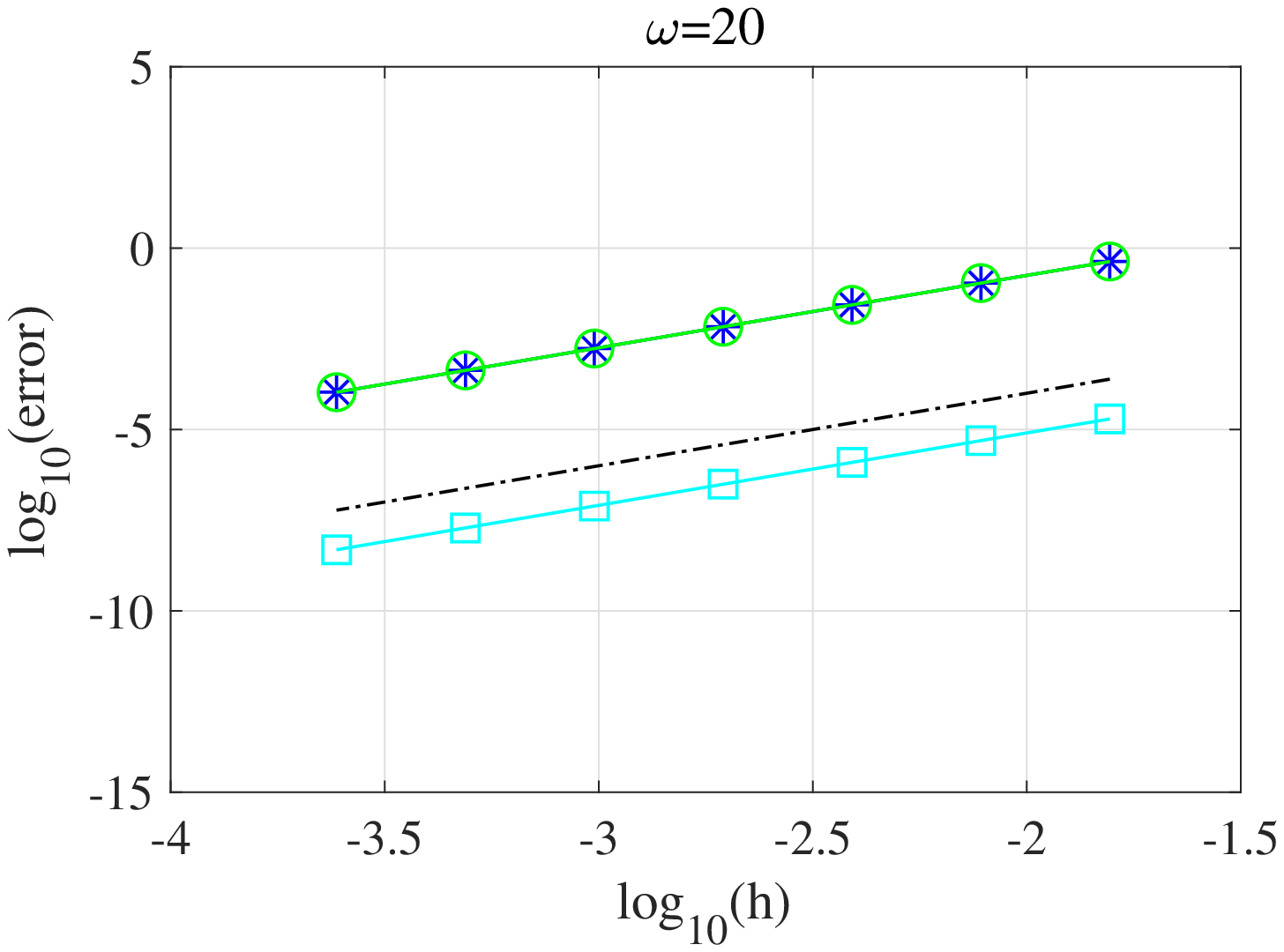}}		
		\end{tabular}
		\caption{{Problem \ref{prob2}.} The global errors \eqref{global error} with $T = 1$ and $h = 1/2^k$ for $k = 6,7,\dots,12$ under different $\omega$.}\label{fig22}
	\end{figure}
	
	\begin{figure}[t!]
		\centering
		\begin{tabular}[c]{ccc}%
			\subfigure{\includegraphics[width=4.7cm,height=4.2cm]{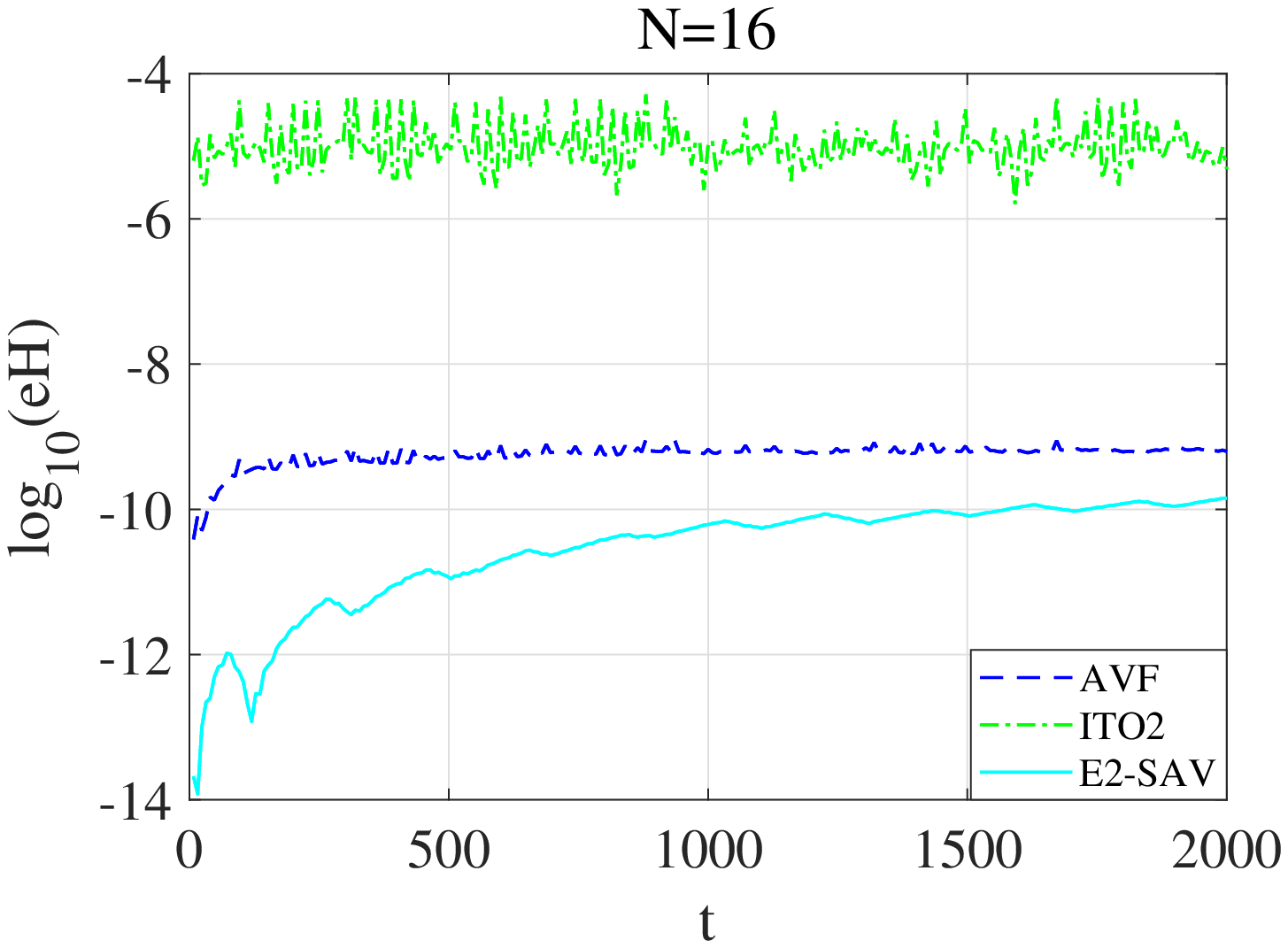}}			\subfigure{\includegraphics[width=4.7cm,height=4.2cm]{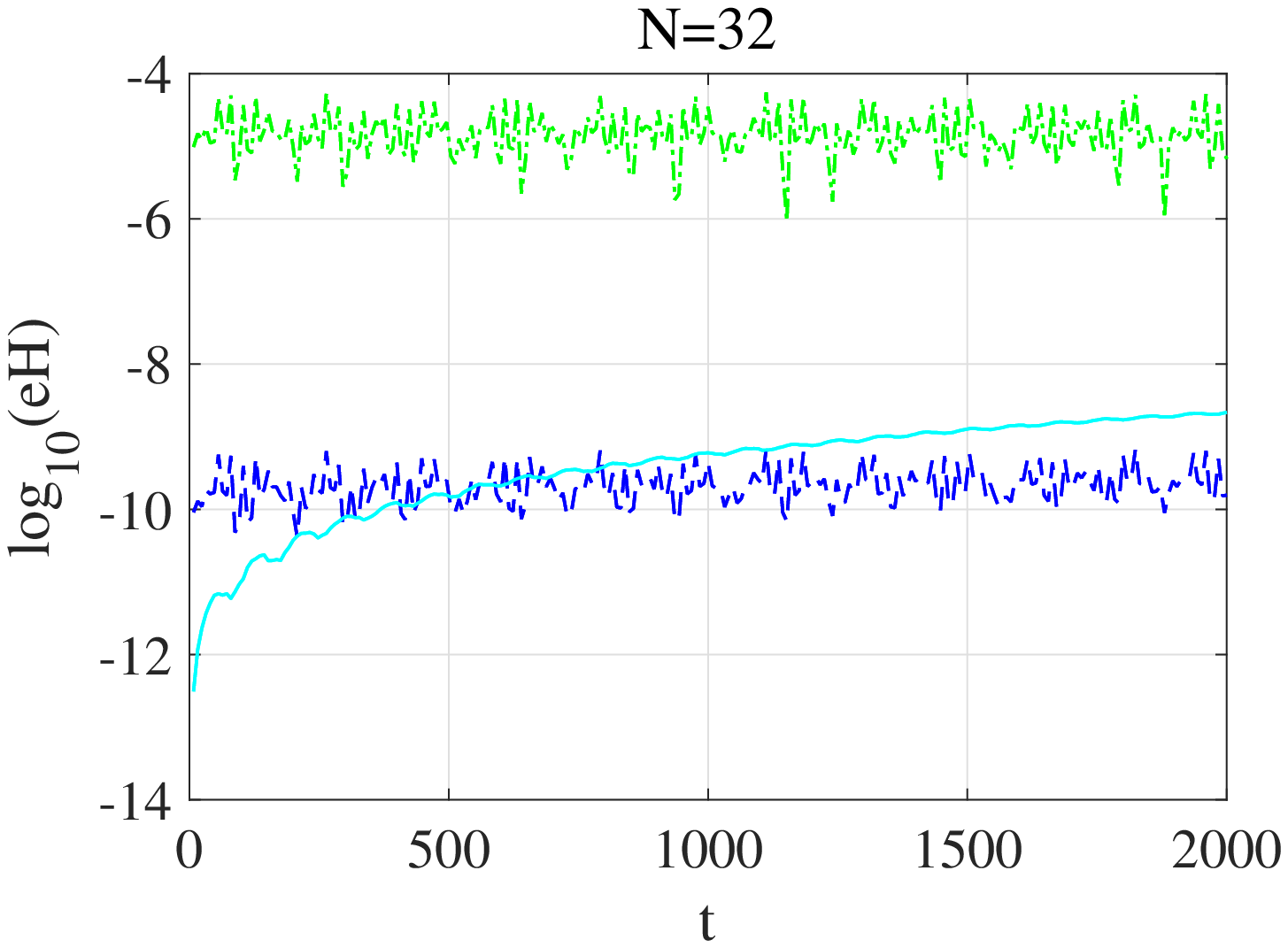}}
			\subfigure{\includegraphics[width=4.7cm,height=4.2cm]{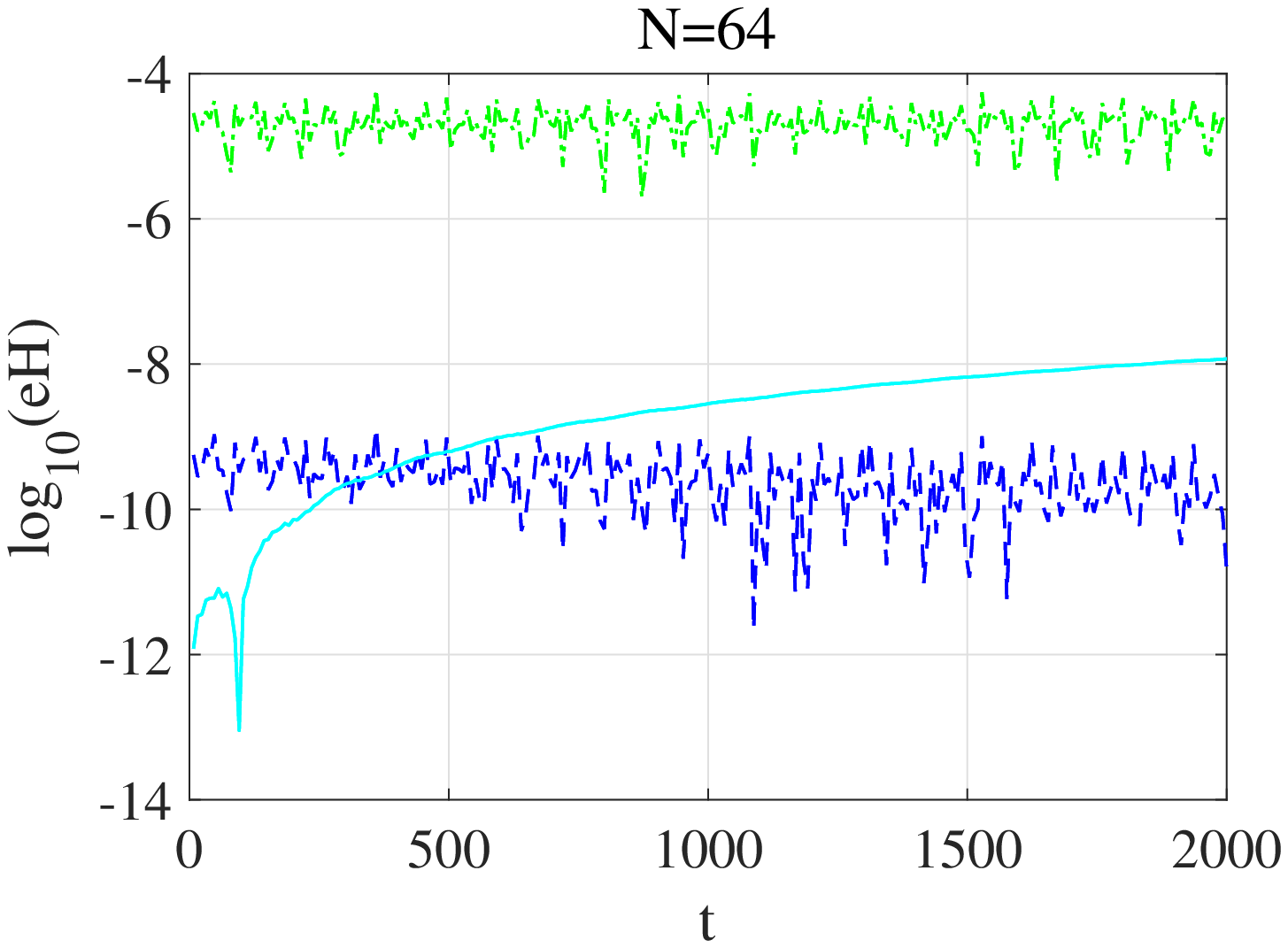}}		
		\end{tabular}
		\caption{Problem \ref{prob3}. Evolution of the error (\ref{ene-err}) with modified energy $\widehat{H}(U,U_t,s)=\frac{1}{2}(U_t)^\intercal U_t+\frac{1}{2}U^\intercal QU+s^2-100$ as function of time $t_n = nh$ under different $N$.}\label{fig31}
	\end{figure}
	
	\begin{figure}[t!]
		\centering
		\begin{tabular}[c]{ccc}%
			\subfigure{\includegraphics[width=4.7cm,height=4.2cm]{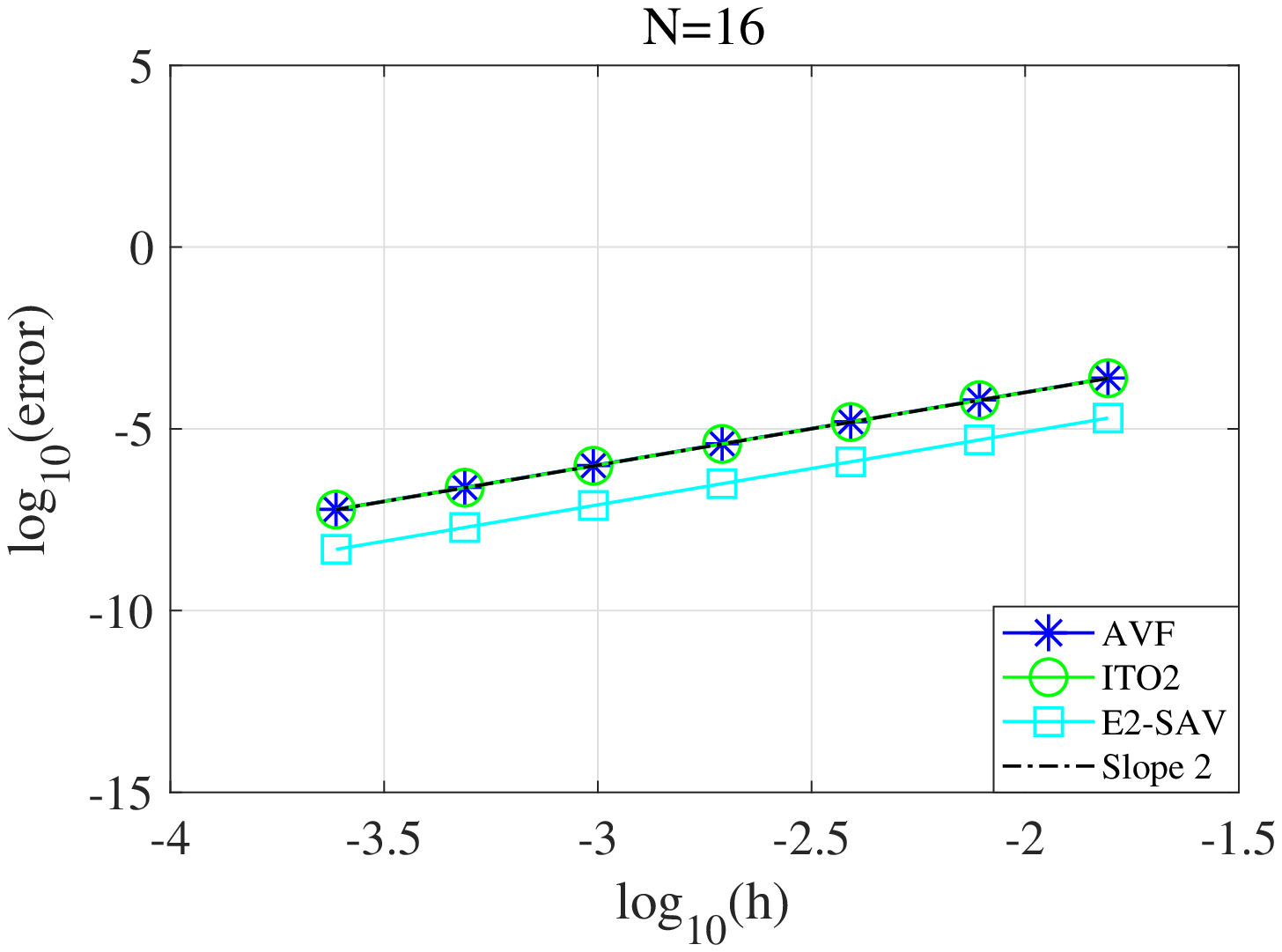}}			\subfigure{\includegraphics[width=4.7cm,height=4.2cm]{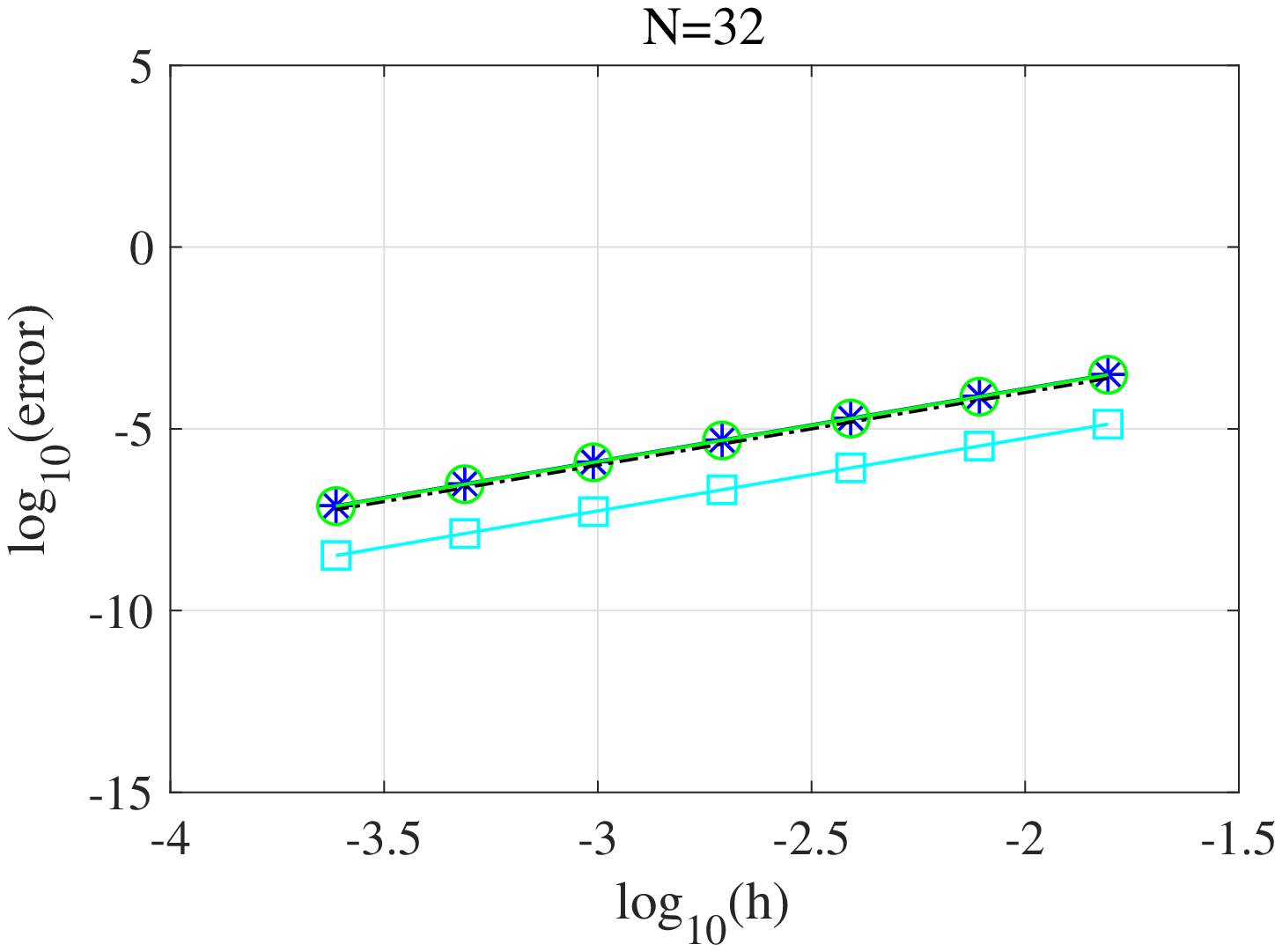}}
			\subfigure{\includegraphics[width=4.7cm,height=4.2cm]{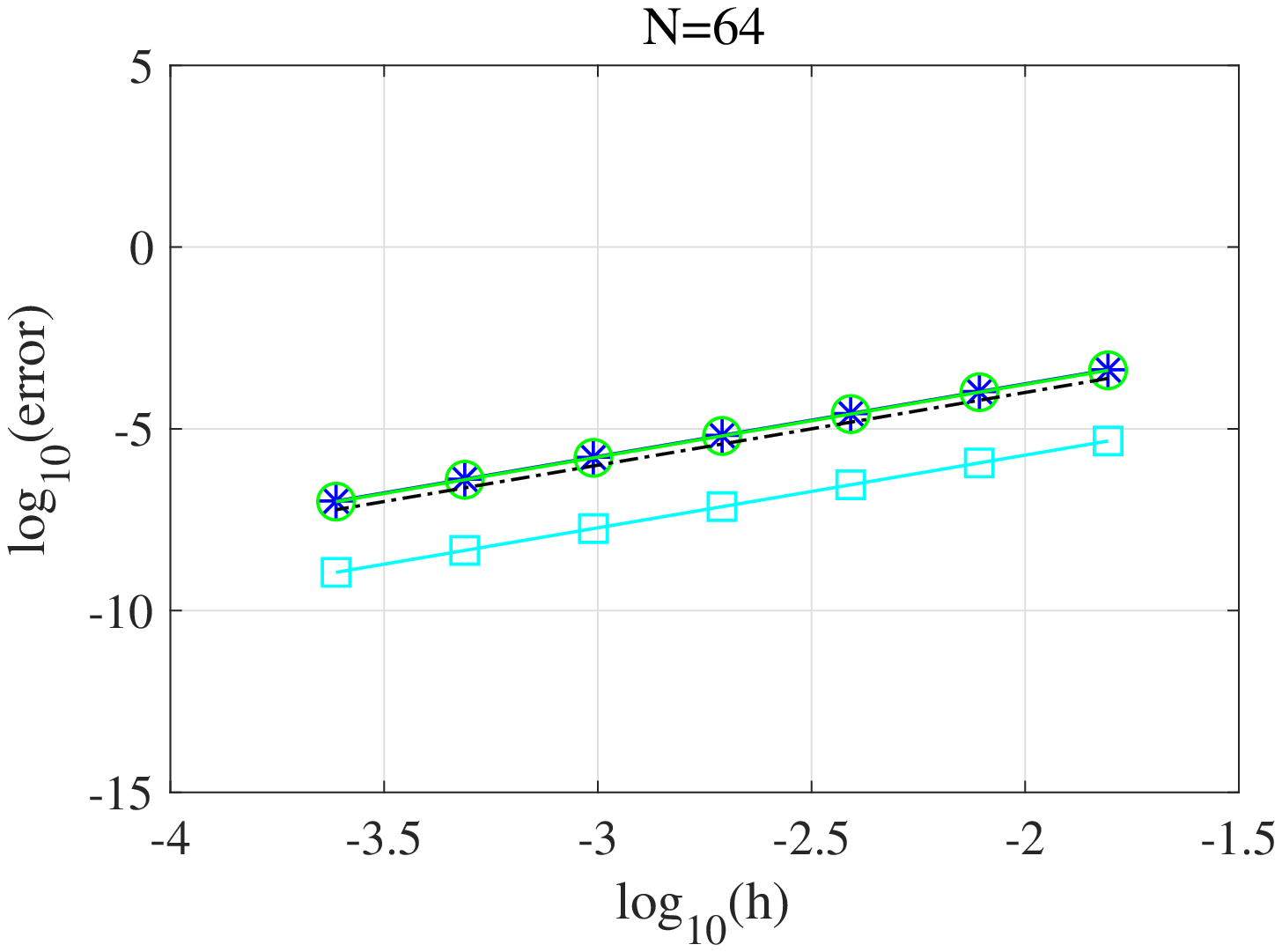}}		
		\end{tabular}
		\caption{Problem \ref{prob3}. The global errors \eqref{global error} with $T = 1$ and $h = 1/2^k$ for $k = 6,7,\dots,12$ under different $N$.}\label{fig32}
	\end{figure}
	\subsection{SSAVs for the CPD}	
	{In this part}, we choose Boris algorithm (single step) given {in \cite{70Boris}} {and AVF} for comparison  in the energy conservation, accuracy, and CPU time. We deal with the implicit scheme AVF in the same way as subsection \ref{ne-oso}.
	
	\begin{problem}\label{prob4}
		\textbf{(CPD with a constant magnetic field)} We consider the case where the magnetic field is constant $B(x)=\frac{1}{\eps}(0,0,1)^\intercal$ and $U(x)=\frac{1}{100\sqrt{x_1^2+x_2^2}}$ with $C_0=1$. For initial values we take $x(0) = (0.7, 1, 0.1)^\intercal$ and $v(0) = (0.9, 0.5, 0.4)^\intercal$. Figure \ref{fig31} shows the modified energy conservation of the obtained methods under different $\eps$. To test the accuracy of them, we numerically solve the CPD until $T = 1$, and the global errors \eqref{global error} are presented in Figure \ref{fig32}.
	\end{problem}
	
	\begin{problem}\label{prob5}
		\textbf{(CPD with a general magnetic field)} Last but not least, we choose the general magnetic field as
		$$B(x)=\nabla\times\frac{1}{3\eps}(-x_2\sqrt{x_1^2+x_2^2},-x_1\sqrt{x_1^2+x_2^2},0)^\intercal={\frac{1}{\eps}}(0,0,\sqrt{x_1^2+x_2^2})^\intercal.$$
		The scalar potential and the initial values are the same as them in Problem \ref{prob4}. The modified energy conservation of SSAVs are shown in Figure \ref{fig51}. The global errors \eqref{global error} with $T=1$ are presented in Figure \ref{fig52}.
		In addition, the CPU time are displayed in Figure \ref{fig53}.
	\end{problem}
	
	{From the  numerical results shown in Figures \ref{fig41}-\ref{fig53}}, we can draw the following conclusions
	\begin{enumerate}
		\item In terms of energy conservation, we can observe a significant difference between these six methods. All the  SSAVs  hold a long exact energy-preserving behaviour but the Boris algorithm and AVF  do not have such conservation.
		\item From Figure \ref{fig42} and Figure \ref{fig52}, we can observe that our methods have better accuracy than Boris algorithm and AVF method especially when $\eps$ is small. In particular, we can {notice that} the global error lines of our methods S1-SAV, S2-SAV, S4-SAV and S6-SAV are respectively nearly parallel to the lines of slope 1, 2, 4 and 6,  indicating that they are of order 1, 2, 4 and 6 respectively.
		\item For CPU time in Figure \ref{fig53}, obviouly, the cost of AVF is more expensive than SSAVs. It must also be mentioned that the tolerance was not met until $10^3$ iterations for AVF  in many cases. This fact further demonstrates the tremendous computational efficiency of our methods compared with the AVF method.
	\end{enumerate}
	
	\begin{figure}[t!]
		\centering
		\begin{tabular}[c]{ccc}%
			\subfigure{\includegraphics[width=4.7cm,height=4.2cm]{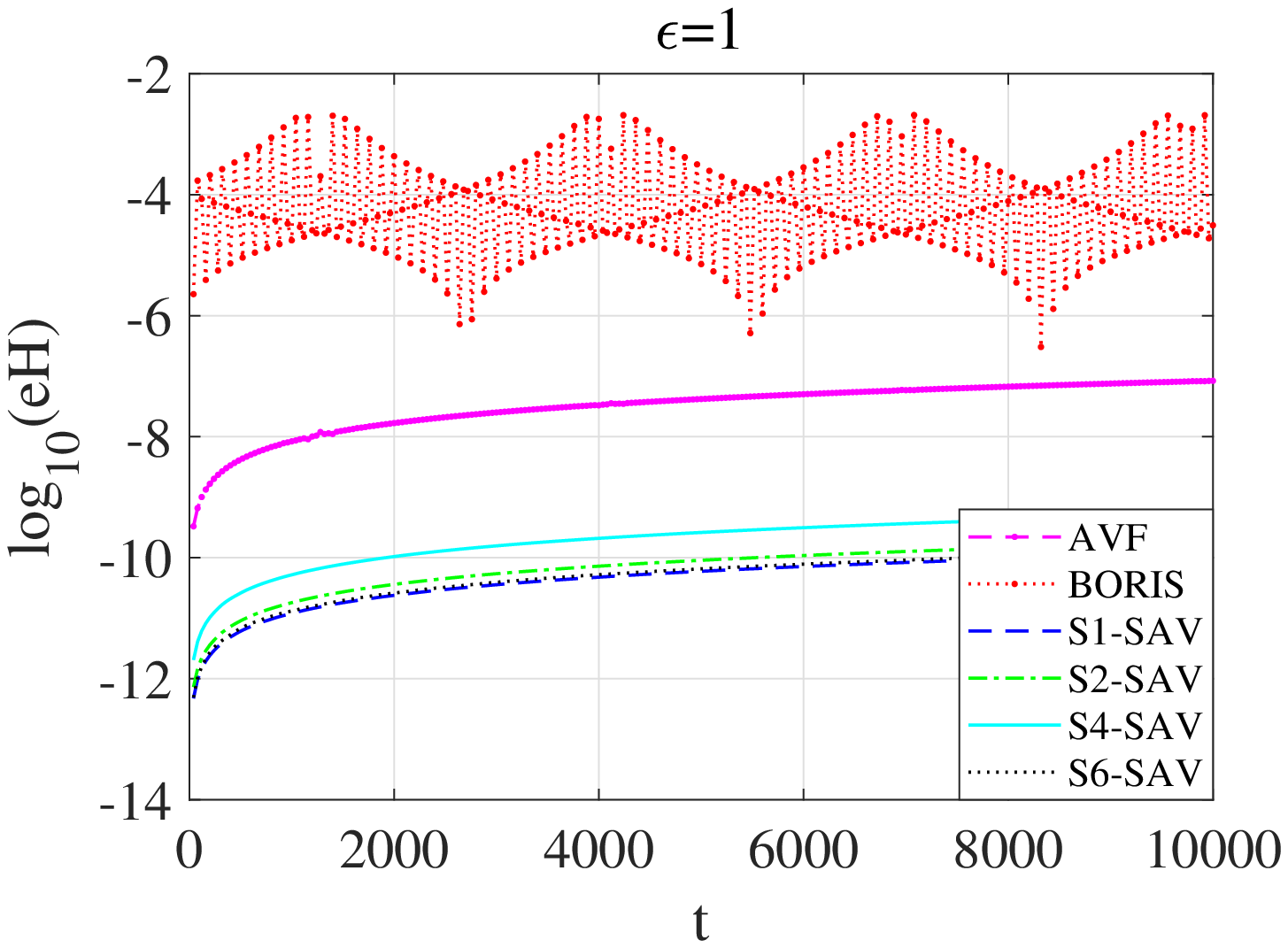}}			\subfigure{\includegraphics[width=4.7cm,height=4.2cm]{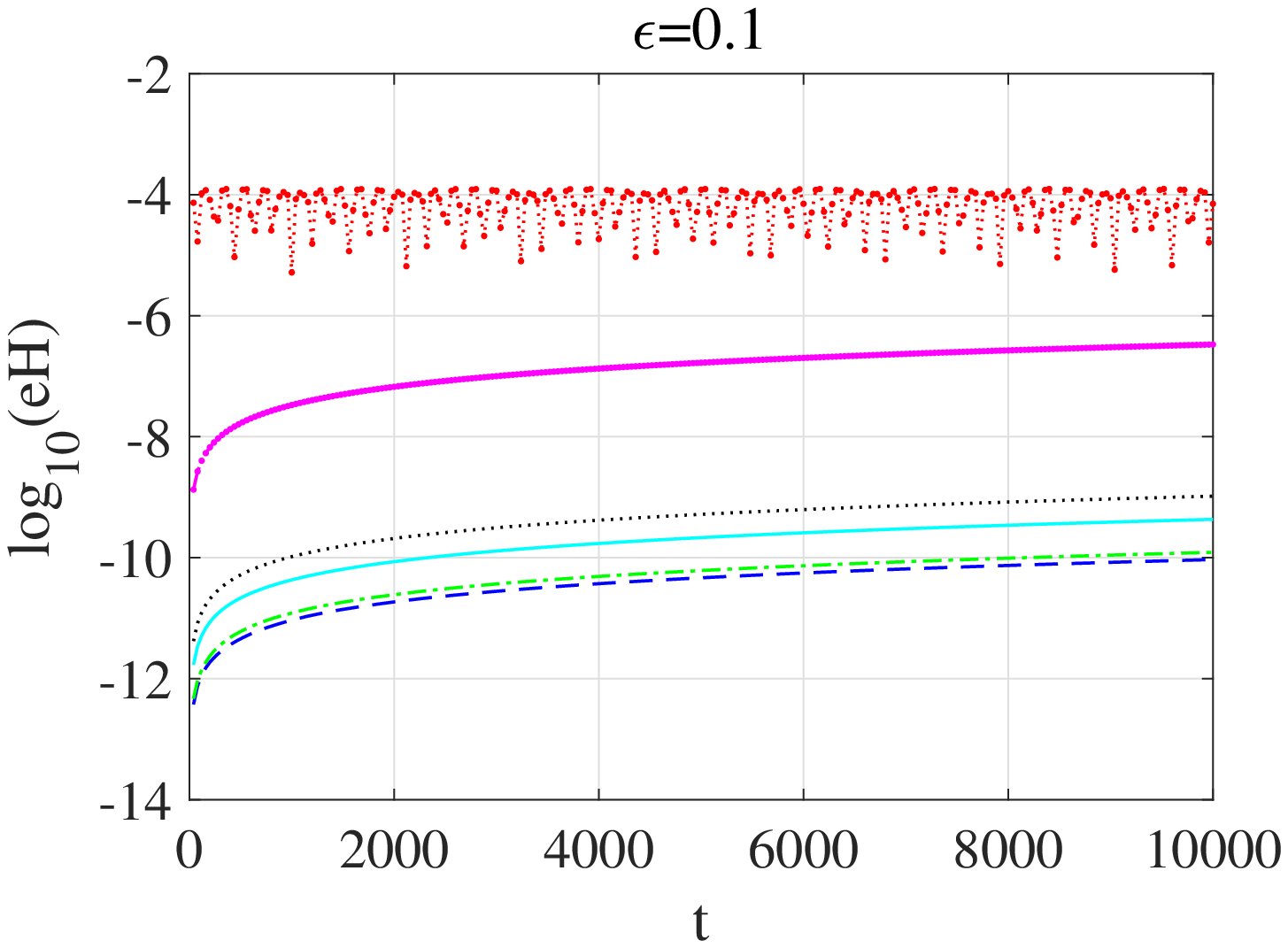}}
			\subfigure{\includegraphics[width=4.7cm,height=4.2cm]{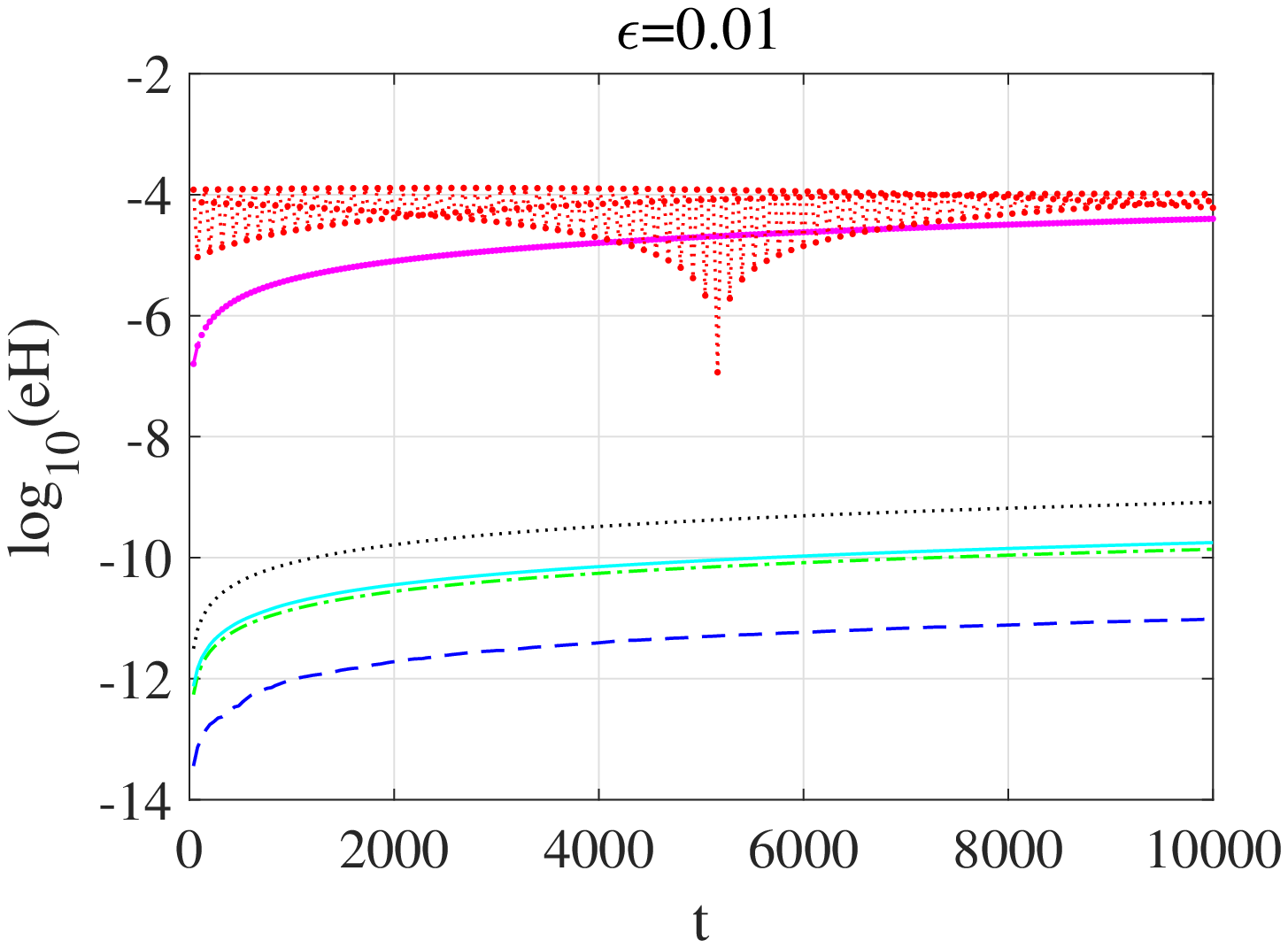}}		
		\end{tabular}
		\caption{{Problem \ref{prob4}.} Evolution of the error (\ref{ene-err}) with the modified energy $\tilde{H}(v,r)=\frac{1}{2}\norm{v}^2+r^2-1$ as function of time $t_n = nh$ under different $\eps$.}\label{fig41}
	\end{figure}
	
	\begin{figure}[t!]
		\centering
		\begin{tabular}[c]{ccc}%
			\subfigure{\includegraphics[width=4.7cm,height=4.2cm]{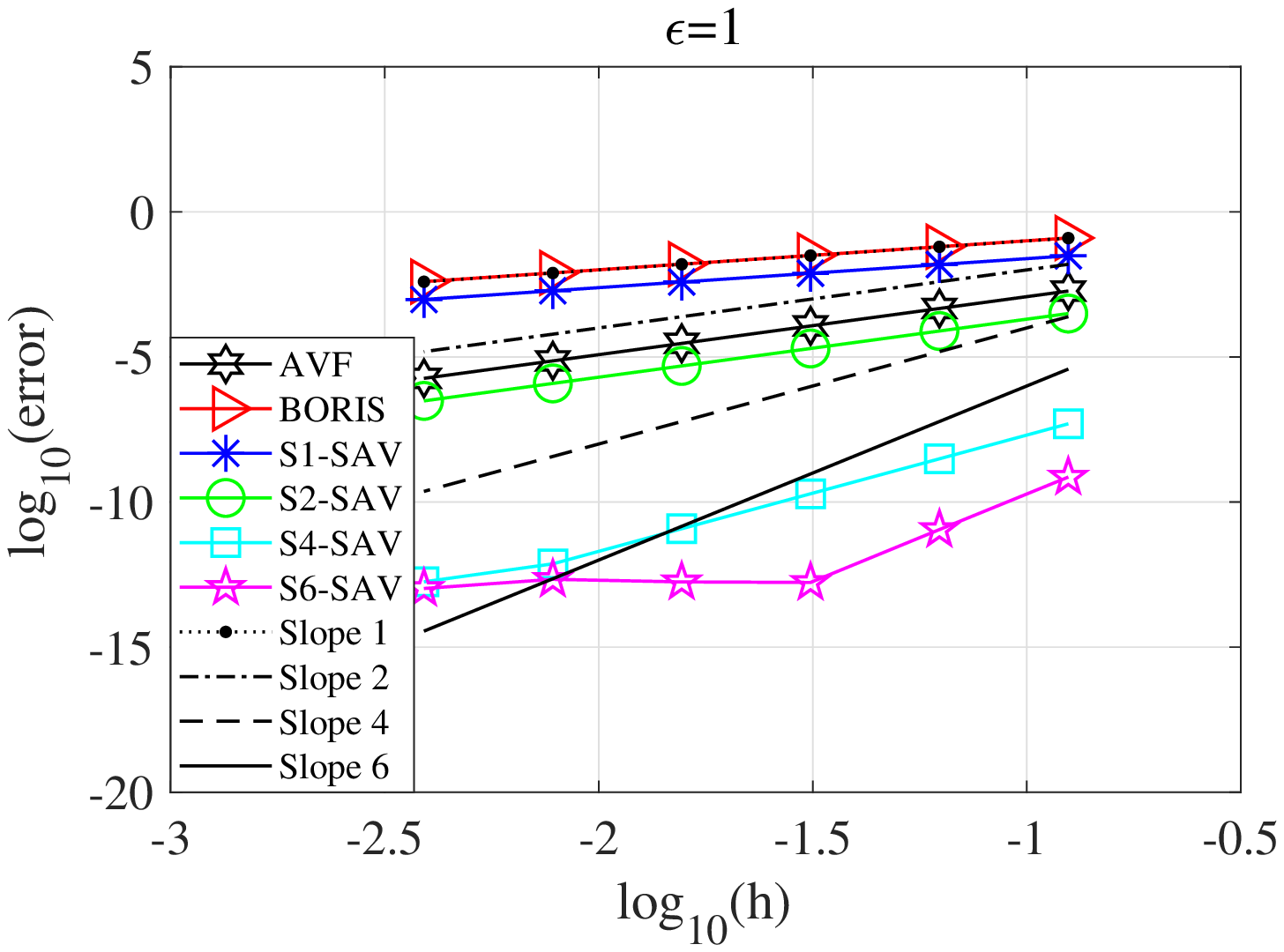}}			\subfigure{\includegraphics[width=4.7cm,height=4.2cm]{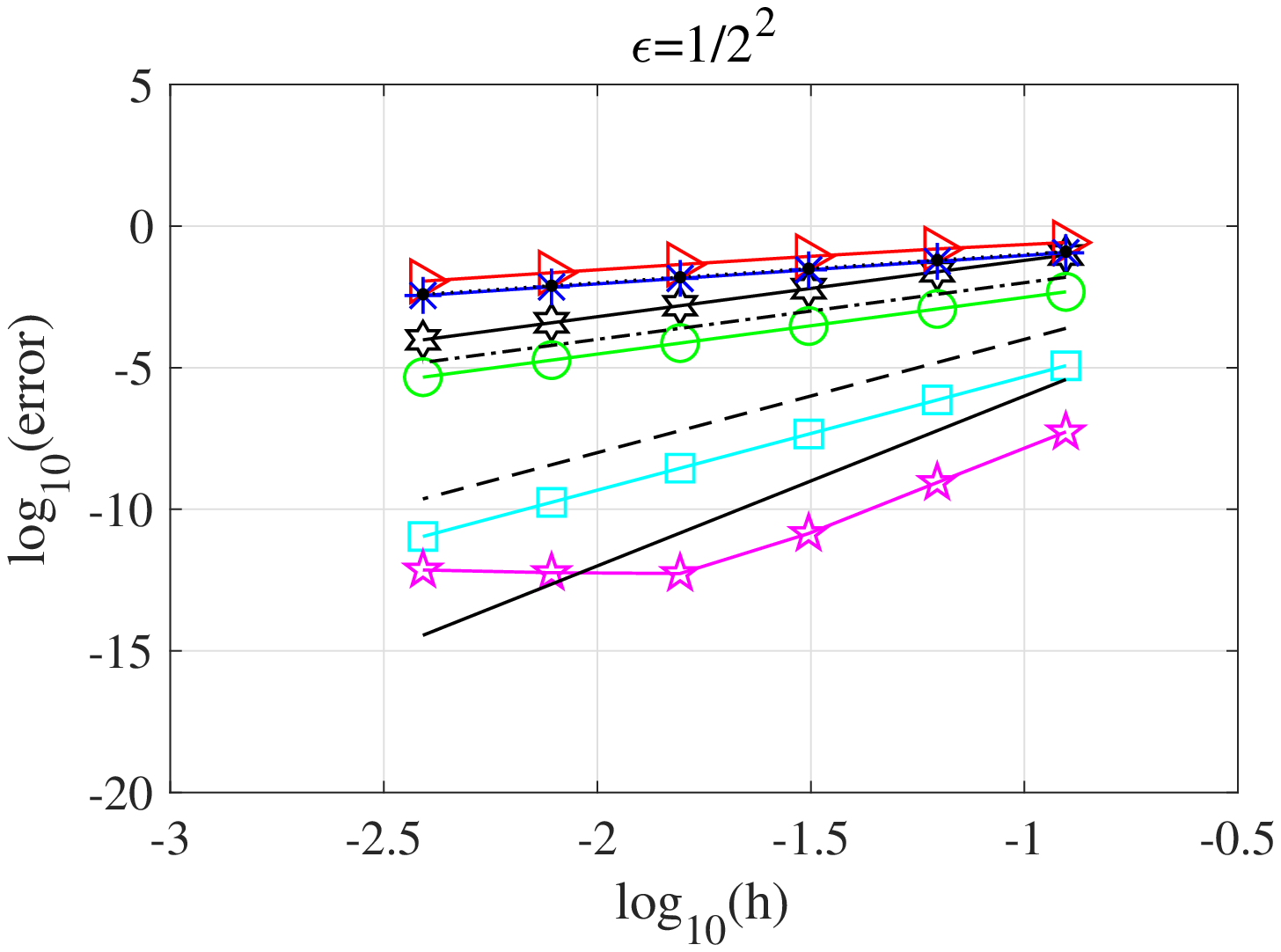}}
			\subfigure{\includegraphics[width=4.7cm,height=4.2cm]{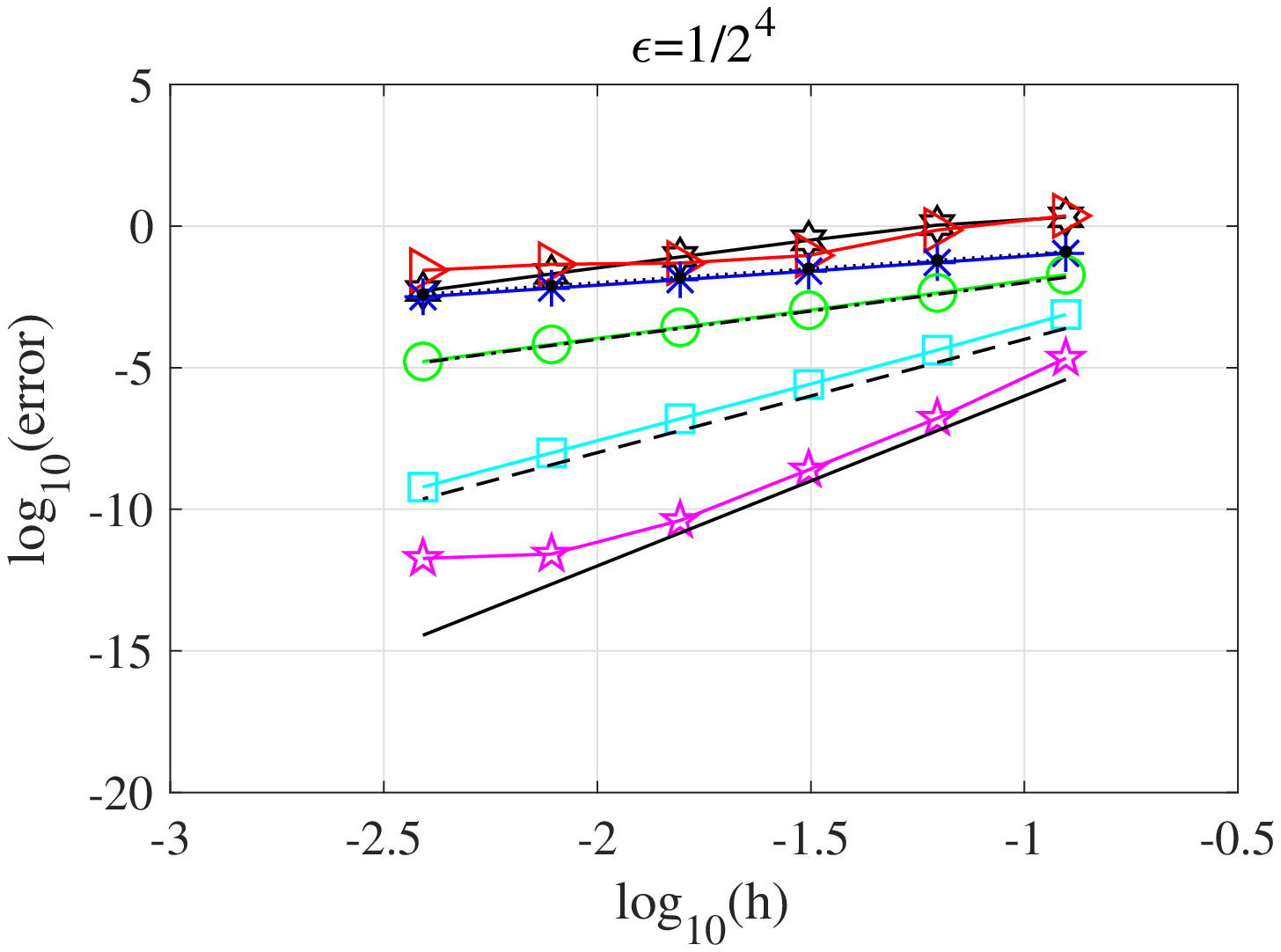}}		
		\end{tabular}
		\caption{{Problem \ref{prob4}.} The global errors \eqref{global error} with $T = 1$ and $h = 1/2^k$ for $k = 3,\dots,8$ under different $\eps$.}\label{fig42}
	\end{figure}
	
	\begin{figure}[t!]
		\centering
		\begin{tabular}[c]{ccc}%
			\subfigure{\includegraphics[width=4.7cm,height=4.2cm]{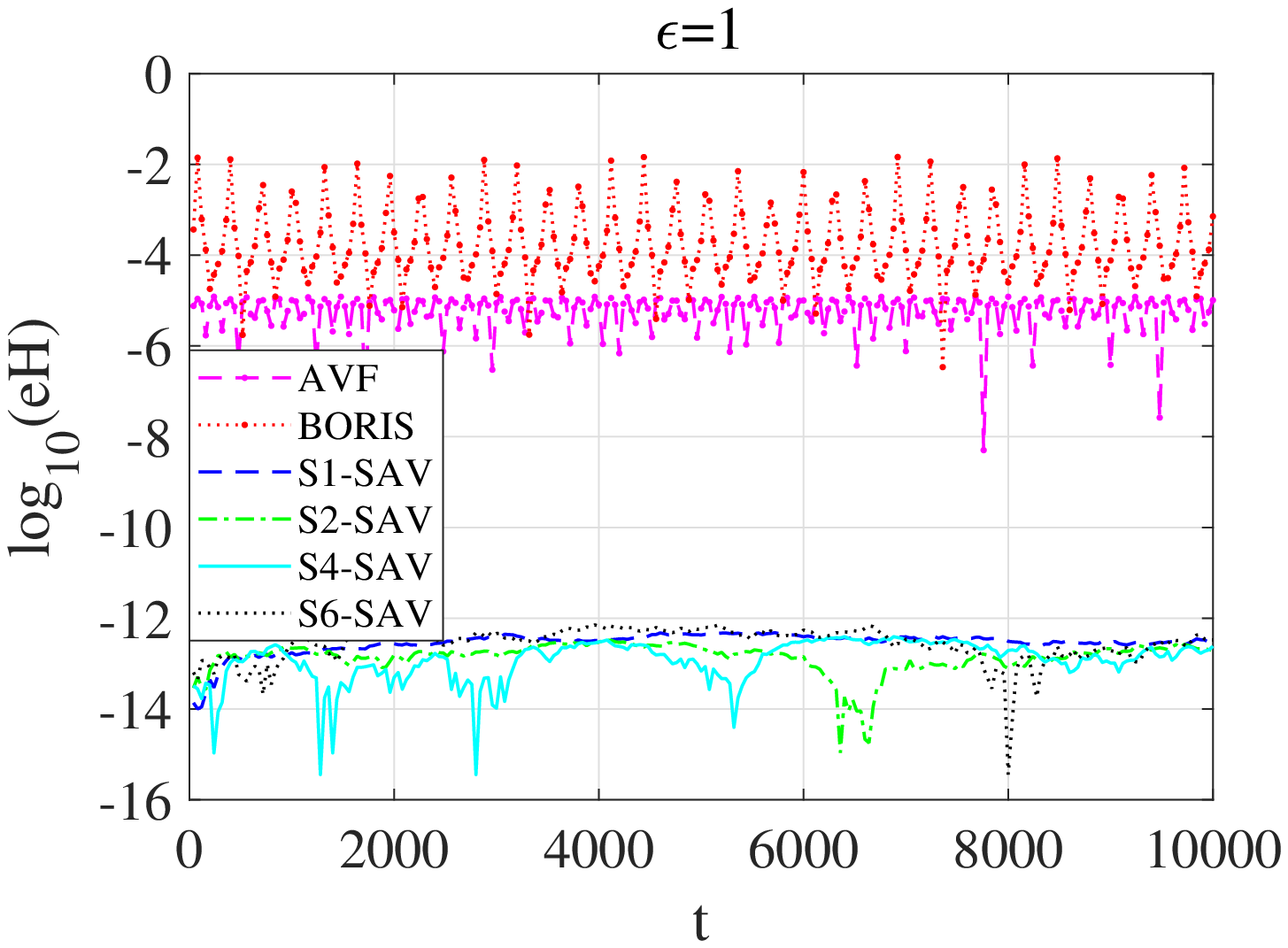}}			\subfigure{\includegraphics[width=4.7cm,height=4.2cm]{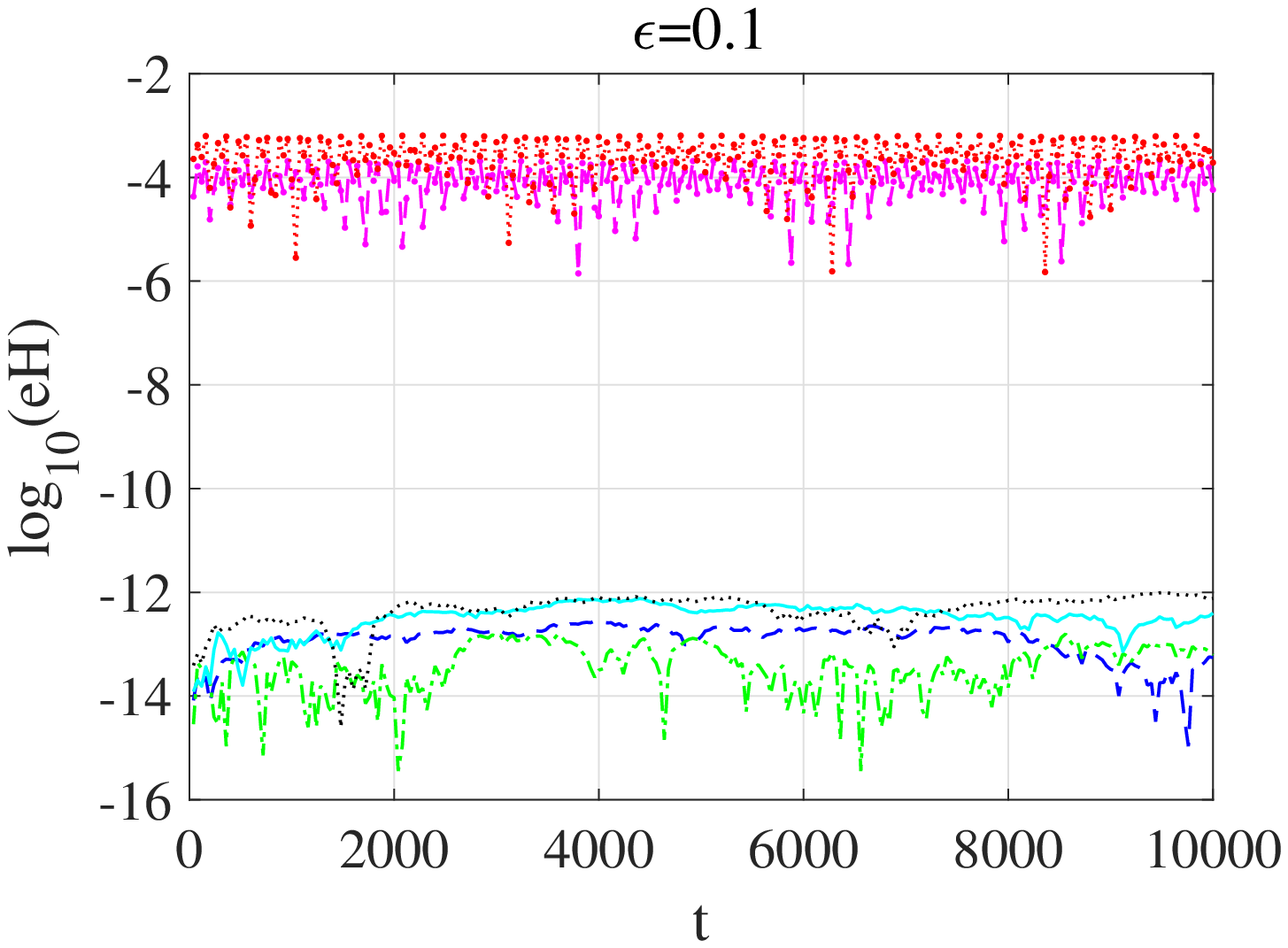}}
			\subfigure{\includegraphics[width=4.7cm,height=4.2cm]{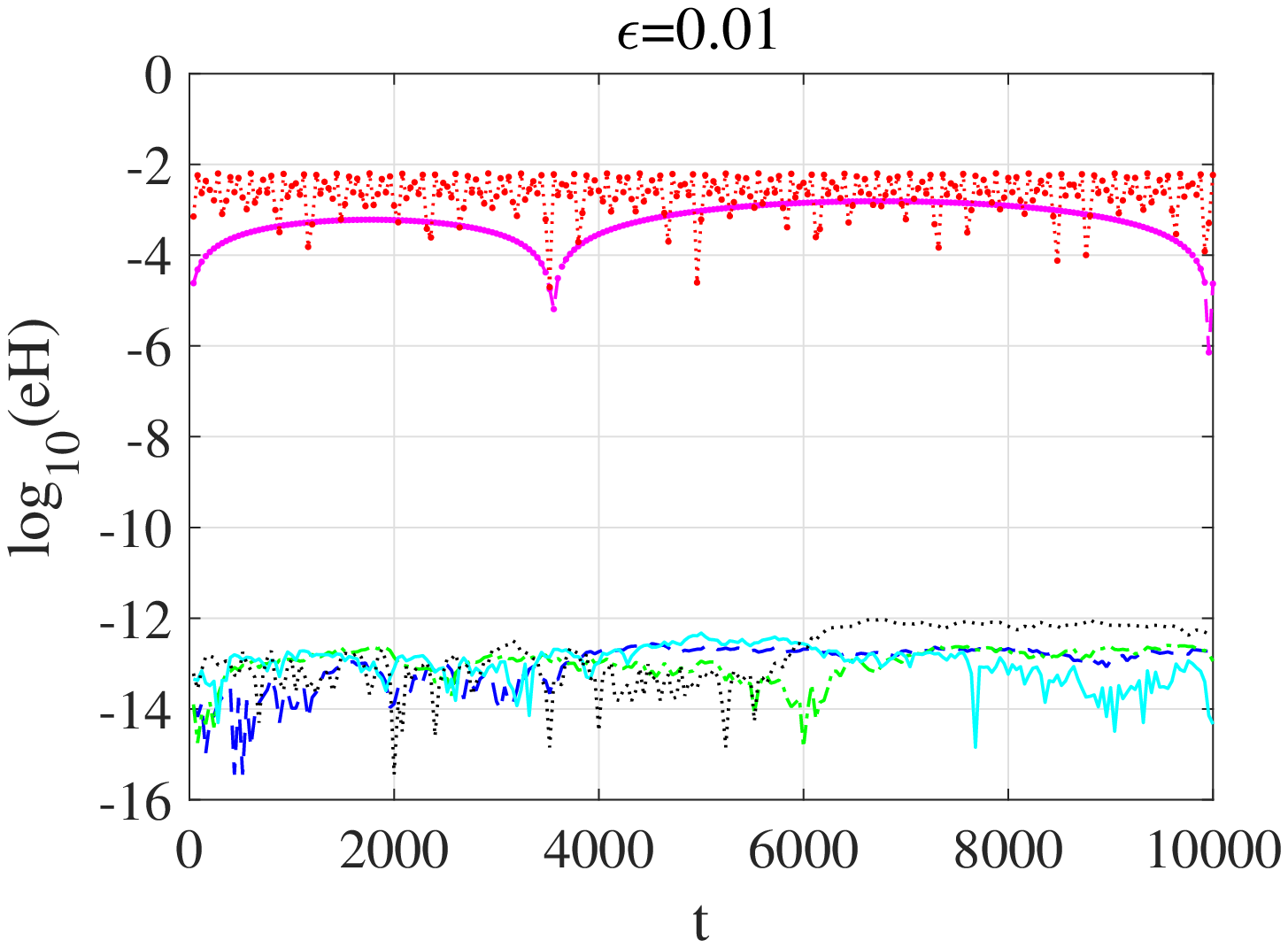}}		
		\end{tabular}
		\caption{{Problem \ref{prob5}.} Evolution of the error (\ref{ene-err}) with the modified energy $\tilde{H}(v,r)=\frac{1}{2}\norm{v}^2+r^2-1$ as function of time $t_n = nh$ under different $\eps$.}\label{fig51}
	\end{figure}
	
	\begin{figure}[t!]
		\centering
		\begin{tabular}[c]{ccc}%
			\subfigure{\includegraphics[width=4.7cm,height=4.2cm]{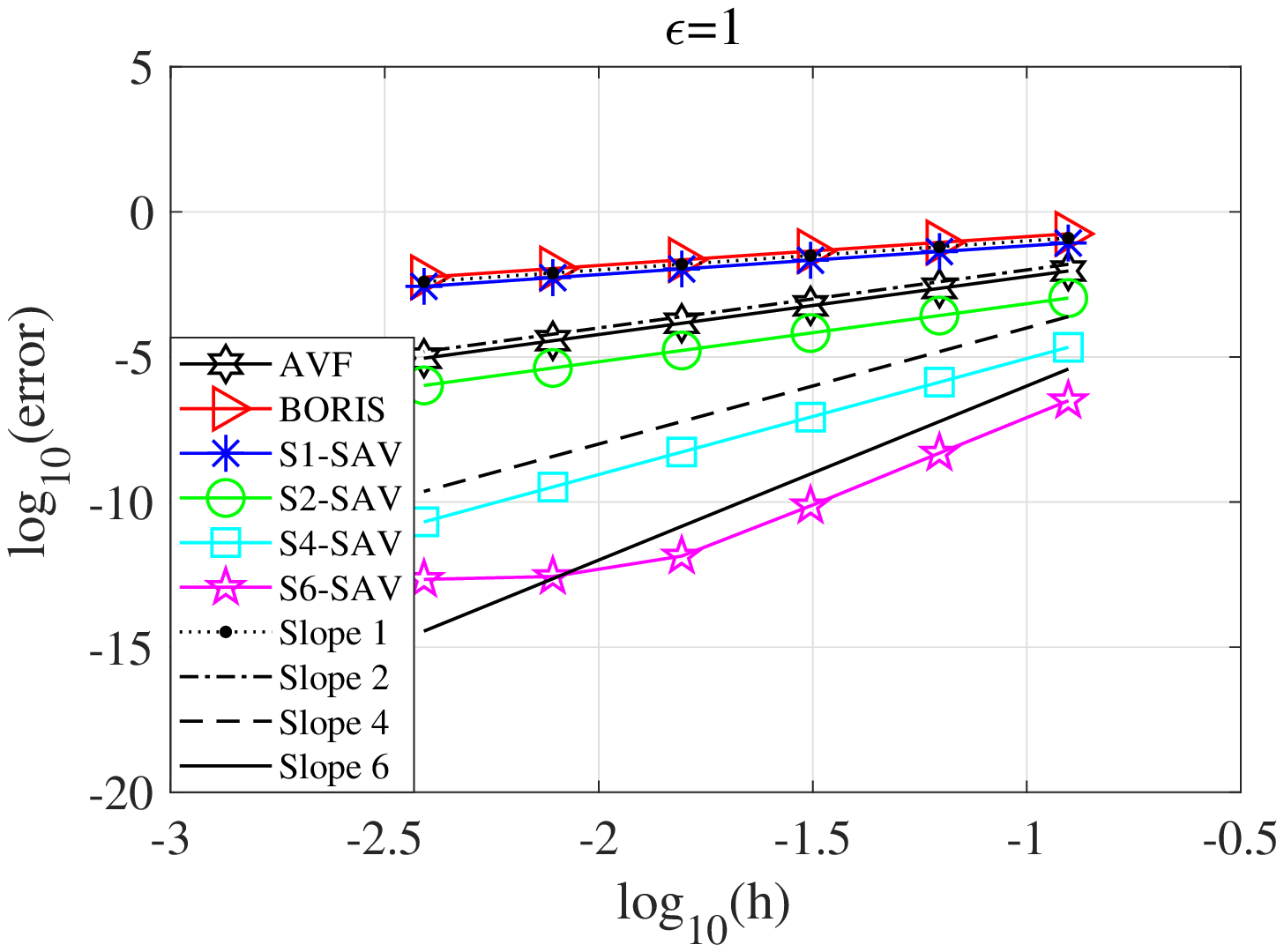}}			\subfigure{\includegraphics[width=4.7cm,height=4.2cm]{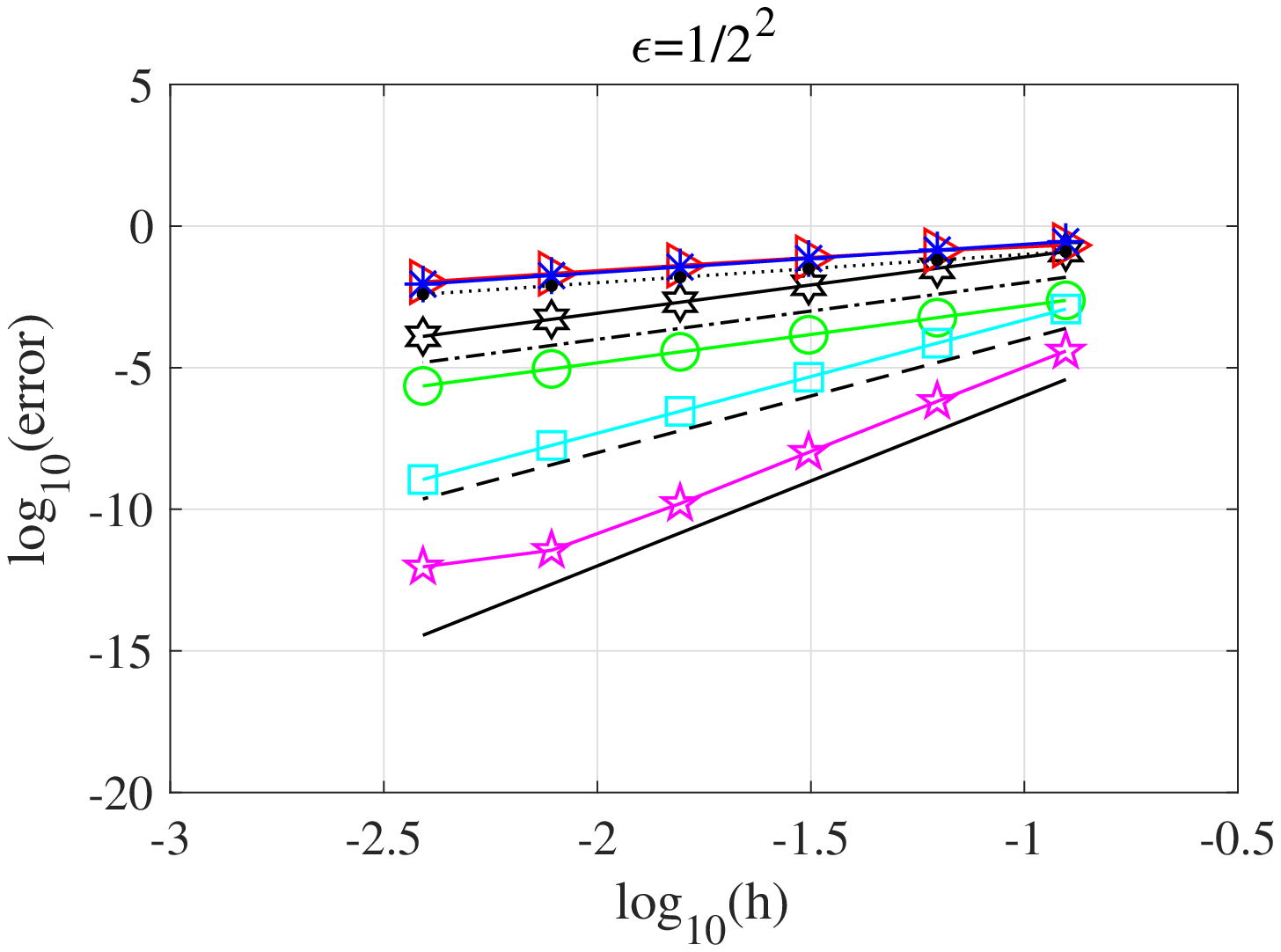}}
			\subfigure{\includegraphics[width=4.7cm,height=4.2cm]{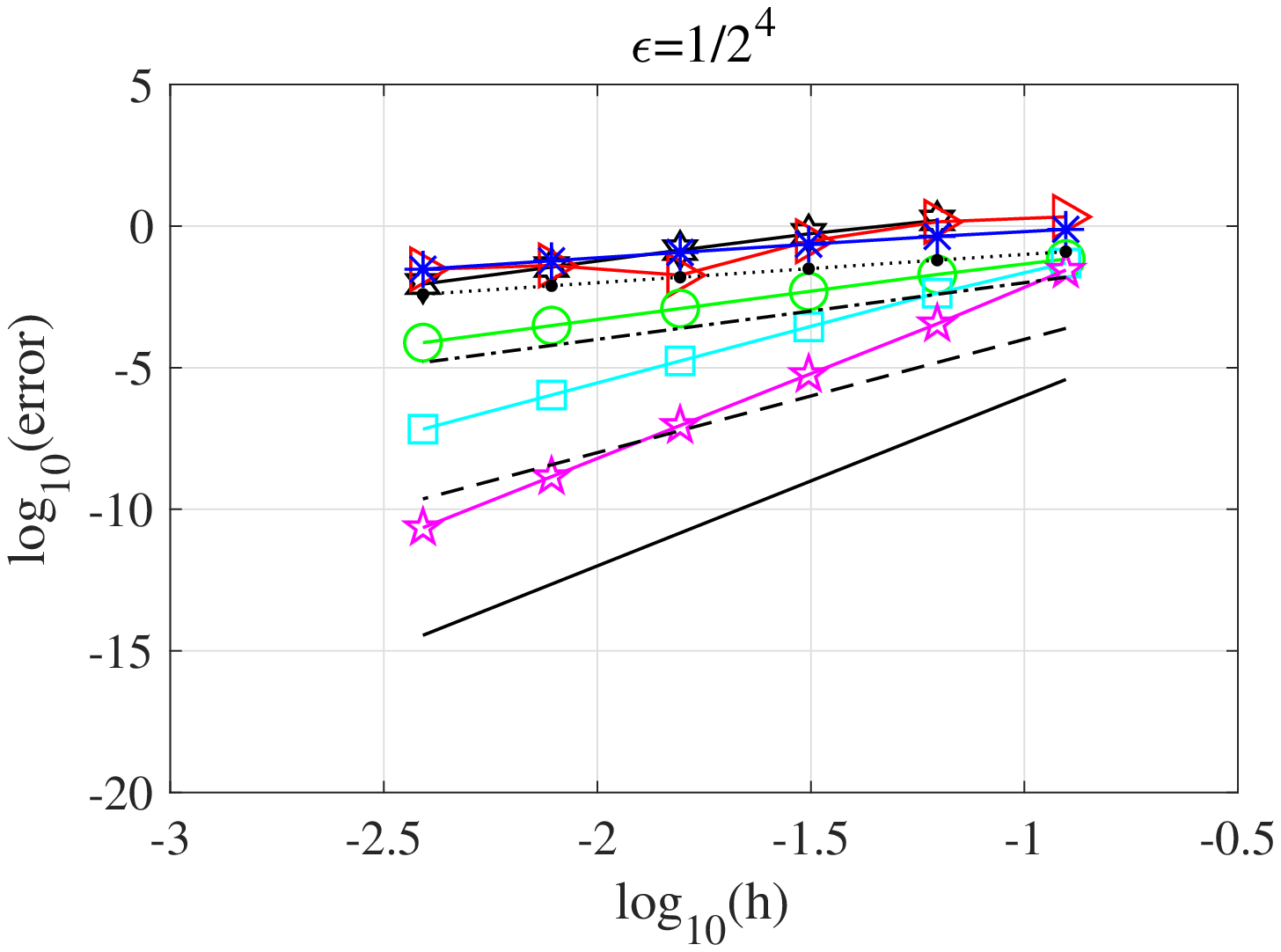}}		
		\end{tabular}
		\caption{{Problem \ref{prob5}.} The global errors \eqref{global error} with $T = 1$ and $h = 1/2^k$ for $k = 3, ..., 8$ under different $\eps$.}\label{fig52}
	\end{figure}
	
	\begin{figure}[t!]
		\centering
		\begin{tabular}[c]{ccc}%
			\subfigure{\includegraphics[width=4.7cm,height=4.2cm]{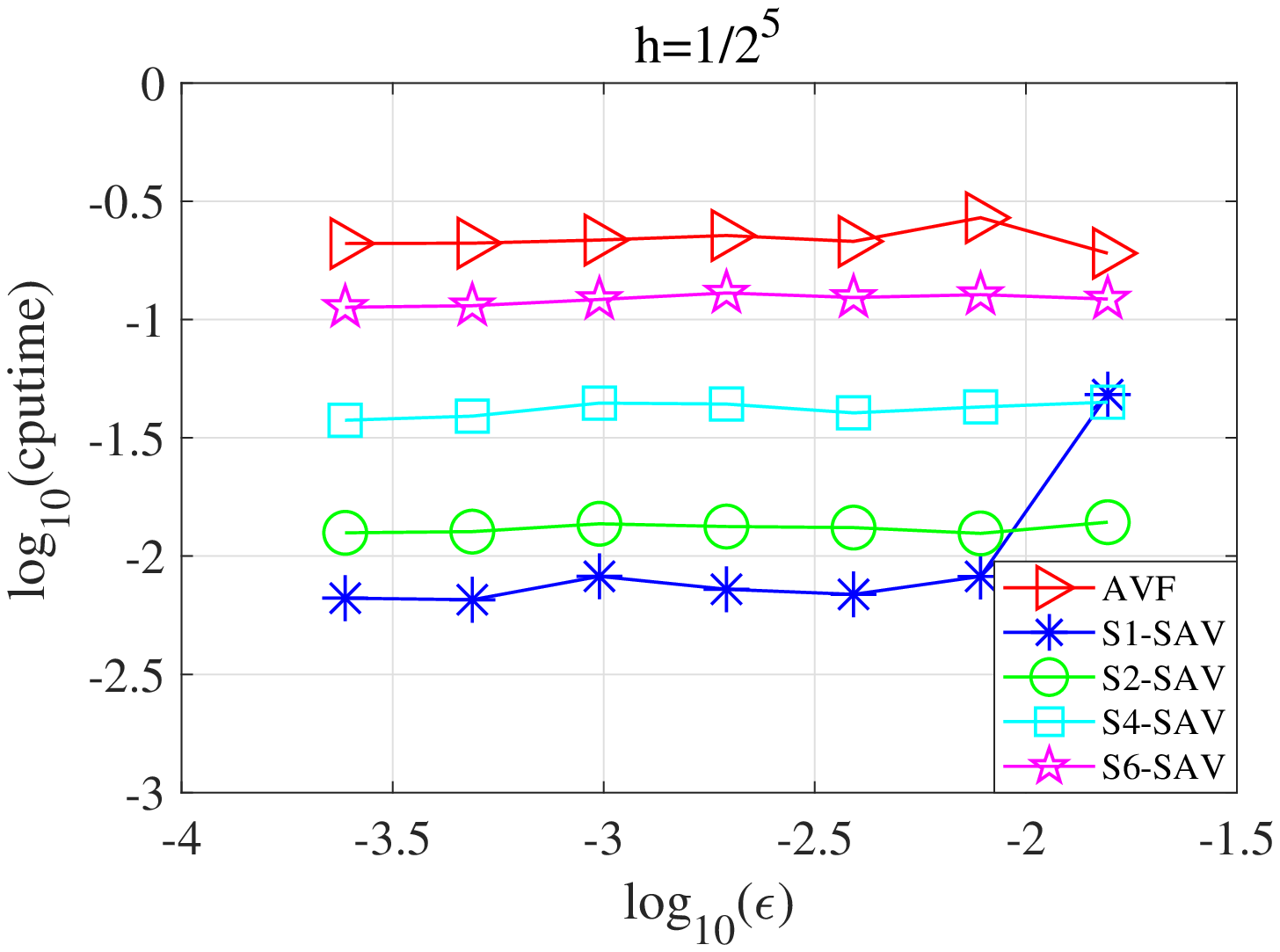}}			\subfigure{\includegraphics[width=4.7cm,height=4.2cm]{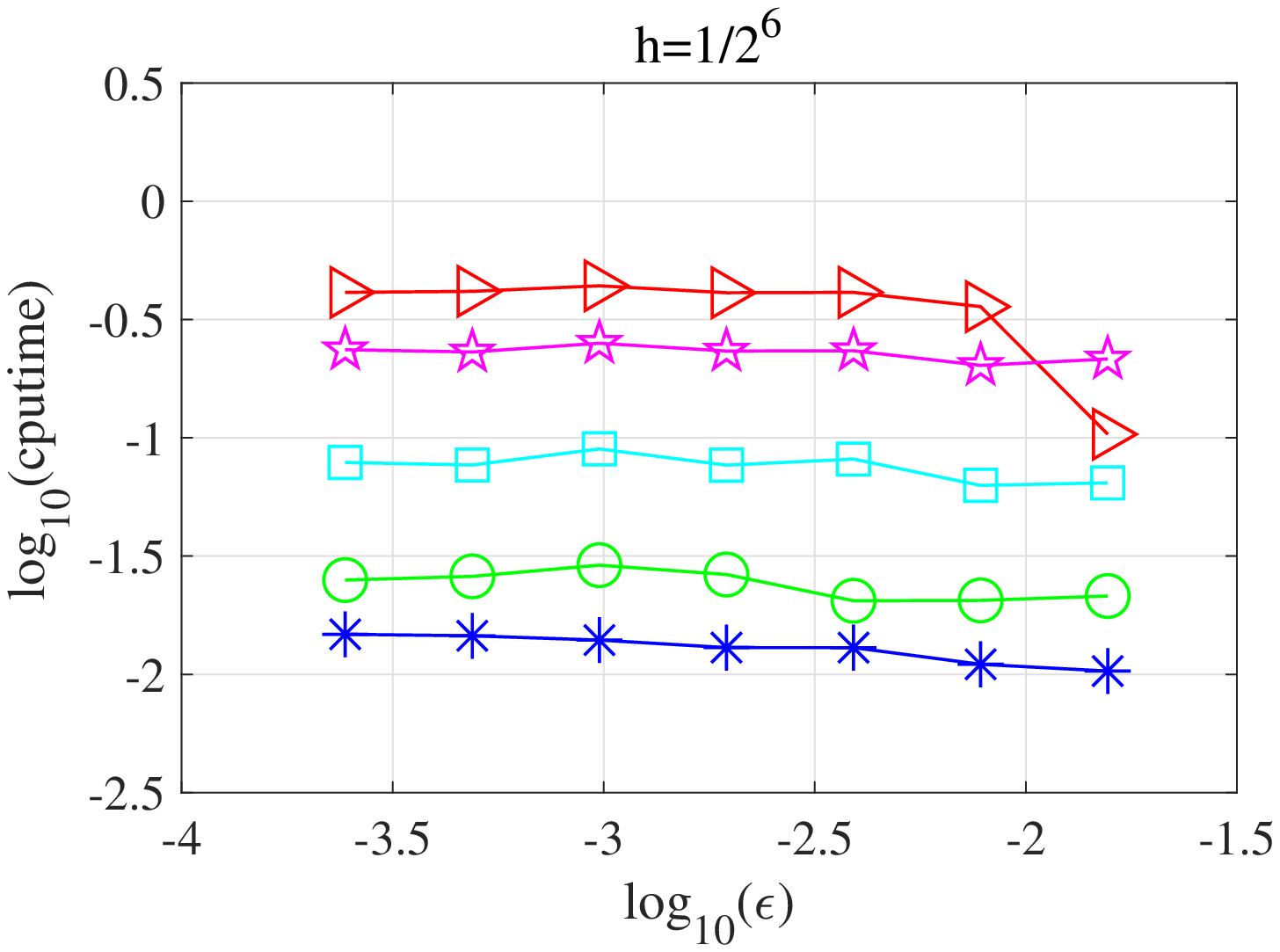}}
			\subfigure{\includegraphics[width=4.7cm,height=4.2cm]{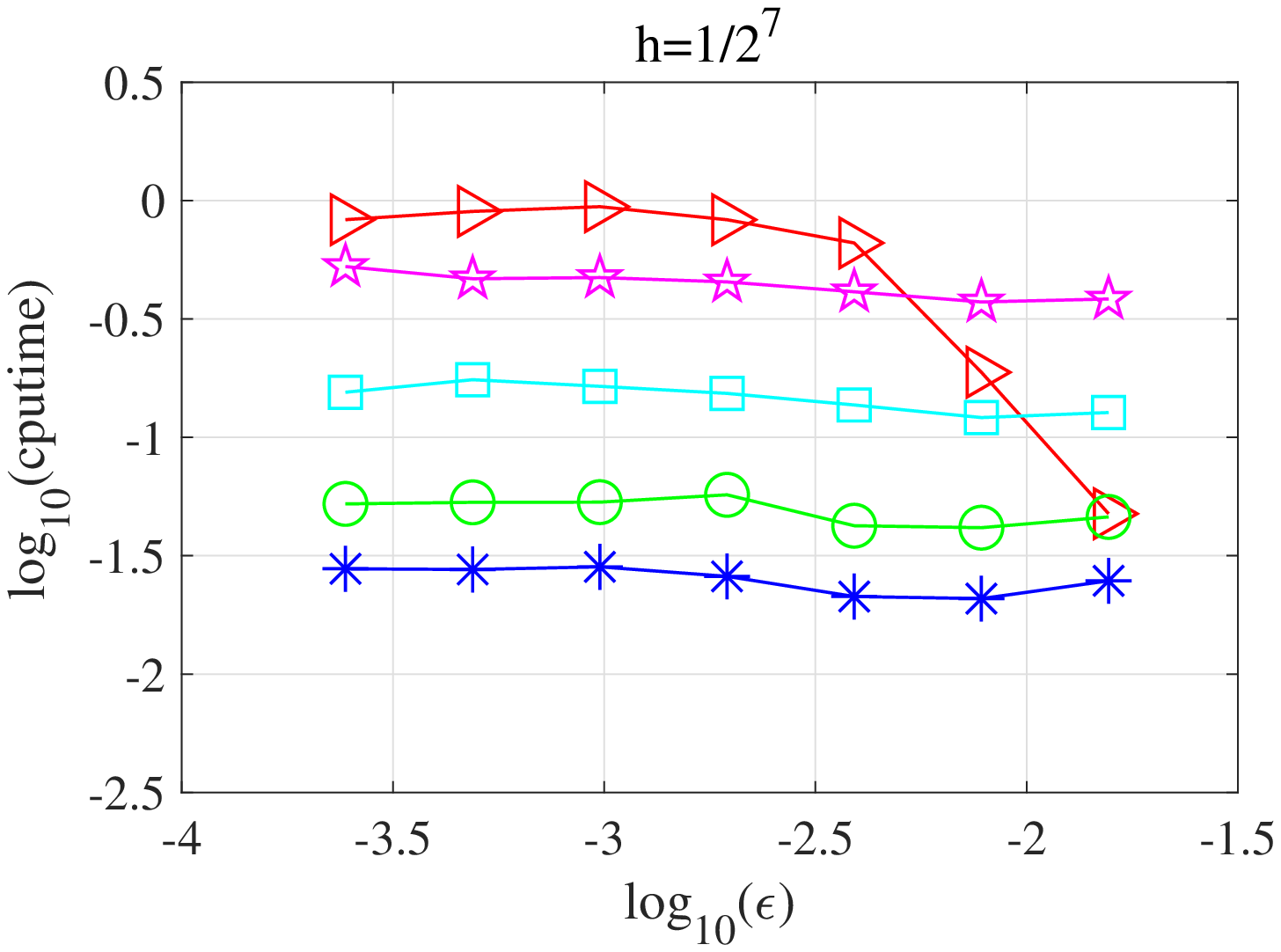}}		
		\end{tabular}
		\caption{{Problem \ref{prob5}.} Cputime of AVF and SSAVs under different $\eps=1/2^k$ for $k = 6,\dots,12$.}\label{fig53}
	\end{figure}
	
	\section{Conclusion}
	In this paper, we proposed and studied a new class of linearly implicit schemes for solving two  conservative systems: the oscillatory second-order differential equations (OSDE) and the charged-particle dynamics (CPD).  For the OSDE, by means of SAV approach and exponential integrators, we constructed a linearly implicit energy-preserving scheme (E2-SAV) of second order.  Combined with the splitting methods,   E2-SAV was successfully extended to the CPD to get a class of linearly implicit energy-preserving splitting schemes SSAVs. The rigorous analysis was given to show the excellent properties of the proposed methods including energy preservation and convergence. Moreover, we presented five numerical experiments, which highlighted the effectiveness of our schemes and confirmed the theoretical results.
	\section*{Acknowledgments}
	This work was supported by  Key Research and Development Projects of Shaanxi Province (2023-YBSF-399).
		

\begin{thebibliography}{99}
		
		\bibitem{21Akrivis} G. Akrivis, D. Li, Structure-preserving Gauss methods for the nonlinear Schr\"{o}dinger equation, Calcolo 58 (2021)  17.
		
		\bibitem{79Allen} S. M. Allen, J. W. Cahn, A microscopic theory for antiphase boundary motion and its application to antiphase domain coarsening, Acta Metallurgica, 27 (1979)   1085-1095.
		
		\bibitem{12Brugnano}L. Brugnano, F. Iavernaro, D. Trigiante, Energy and quadratic invariants preserving integrators based upon Gauss collocation formulae, SIAM J. Numer. Anal. 50 (2012)   2897-2916.
		
		\bibitem{70Boris} J. P. Boris, Relativistic Plasma Simulations--Optimization of a Hybrid Code, in: Proc. Fourth Conf. Num. Sim. (1970) 3-67.
		
		\bibitem{58Cahn}J. W. Cahn, J. E. Hilliard, Free energy of a nonuniform system, I. Interfacial free energy, J. Chem. Phys. 28 (1958)   258-267.
		
		\bibitem{20Chartier}Ph. Chartier, M. Lemou, F. Méhats, G. Vilmart, A new class of uniformly accurate methods for highly oscillatory evolution equations, Found. Comput. Math. 20 (2020)   1-33.
		
		\bibitem{22Chartier}Ph. Chartier, M. Lemou, F. Méhats, X. Zhao, Derivative-free high-order uniformly accurate schemes for highly-oscillatory systems, IMA J. Numer. Anal. 42 (2022)   1623-1644.
		
		\bibitem{04Elder}K. Elder, M. Grant, Modeling elastic and plastic deformations in nonequilibrium processing using phase field crystals, Phys. Rev. E 70 (2004)   051605.
		
		\bibitem{06Franco} J.M. Franco, New methods for oscillatory systems based on ARKN methods, Appl. Numer. Math. 56 (2006) 1040-1053.
		
		\bibitem{06Hairer}E. Hairer, Ch. Lubich, G. Wanner, Geometric Numerical Integration: Structure-Preserving Algorithms for Ordinary Differential Equations, 2nd edn. Springer-Verlag, Berlin, Heidelberg, 2006.
		
		\bibitem{64Henon} M. H\'{e}non, C. Heiles, The applicability of the third integral of motion: Some numerical experiments, Astronom. J. 69 (1964)   73-79.
		
		\bibitem{21Huang} F. Huang, J. Shen, Stability and error analysis of a class of high-order IMEX schemes for Navier-Stokes equations with periodic boundary conditions, SIAM J. Numer. Anal. 59 (2021)   2926-2954.
		
		\bibitem{20Jiang} C. Jiang, Y. Wang, W. Cai, A linearly implicit energy-preserving exponential integrator for the nonlinear Klein-Gordon equation, J. Comput. Phys. 419 (2020)   109690.
		
		\bibitem{22Long}T. Li, C. Liu, B. Wang. Long time energy and kinetic energy conservations of exponential integrators for highly oscillatory conservative systems, Numer. Math. Theor. Meth. Appl. 15 (2022)   620-640.
		
		\bibitem{21Li}X. Li, Y. Gong, L. Zhang. Linear high-order energy-preserving schemes for the nonlinear Schr\"{o}dinger equation with wave operator using the Scalar Auxiliary Variable approach, J. Sci. Comput. 88 (2021)  20
		
		\bibitem{22Energy}X. Li, B. Wang, Energy-preserving splitting methods for charged-particle dynamics in a normal or strong magnetic field, Appl. Math. Lett. 124 (2022)   107682.
		
		\bibitem{16Li}Y. Li, X. Wu, Exponential integrators preserving first integrals or Lyapunov functions for conservative or dissipative systems, SIAM J. Sci. Comput. 38 (2016)   A1876-A1895.
		
		\bibitem{AVF}  R.I. McLachlan, G.R.W. Quispel, N. Robidoux, Geometric integration using discrete gradients, Philos. Trans. R. Soc. A 357 (1999)   1021-1046.
		
		\bibitem{22Poulain}A. Poulain, K. Schratz, Convergence, error analysis and longtime behavior of the scalar auxiliary variable method for the nonlinear Schr\"{o}dinger equation, IMA. J. Numer. Anal.  42(2022)   2853-2883.
		
		\bibitem{20An}L. F. Ricketson, L. Chac\'{o}n, An energy conserving and asymptotic preserving charged-particle orbit implicit time integrator for arbitrary electromagnetic fields, J. Comput. Phys. 418 (2020)   109639.
		
		\bibitem{2018Shen}J. Shen, J. Xu, J. Yang, The scalar auxiliary variable (SAV) approach for gradient, J. Comput. Phys. 353 (2018) 407-416.
		
		\bibitem{2019Shen}J. Shen, J. Xu, J. Yang, A new class of efficient and robust energy stable schemes for gradient flows, SIAM Rev. 61(2019)   474-506.
		
		\bibitem{19Shen}X. Shen, M. Leok, Geometric exponential integrators, J. Comput. Phys. 382 (2019)   27-42.
		
		\bibitem{21Exponential} B. Wang, Exponential energy-preserving methods for charged-particle dynamics in a strong and constant magnetic field, J. Comput. Appl. Math. 387 (2021)   112617.
		
		\bibitem{19vp} B. Wang, X. Wu, Volume-preserving exponential integrators and their applications, J. Comput. Phys.  396 (2019)    867-887.
		
		\bibitem{21Error} B. Wang, X. Zhao, Error estimates of some splitting schemes for charged-particle dynamics under strong magnetic	field,  SIAM J. Numer. Anal. 59  (2021)    2075-2105.
		
		\bibitem{23Error} B. Wang, X. Zhao, Geometric two-scale integrators for highly oscillatory system: uniform accuracy and near conservations,  SIAM J. Numer. Anal. (2023)  Accepted for publication.
		
		\bibitem{13Wu}X. Wu, B. Wang and W. Shi, Efficient energy-preserving integrators for oscillatory Hamiltonian systems, J. Comput. Phys. 235 (2013)   587-605.
		
		\bibitem{17Yang}X. Yang, J. Zhao,  Q. Wang. Numerical approximations for the molecular beam epitaxial growth model based on the invariant energy quadratization method, J. Comput. Phys. 333 (2017)   104-127.
		
		\bibitem{17Zhao} J. Zhao, X. Yang, Y. Gong, Q. Wang, A novel linear second order unconditionally energy stable scheme for a hydrodynamic-tensor model of liquid crystals,  Comput. Meth. Appl. Mech. Engrg.  318 (2017)   803-825.
		
		\bibitem{22Zhang}T. Zhang, J. Yuan, Unconditional stability and optimal errorestimates of Euler implicit/explicit-SAV scheme for the Navier-Stokes equations, J. Sci. Comput. 90 (2022) 1.
	\end{thebibliography}
\end{document}